\definecolor{mygray}{gray}{0.85}
\newcommand{\mbf}{\mathbf}
\newcommand{\msc}{\mathcal}
\newcommand{\op}{\operatorname}
\newcommand{\join}{\vee}
\renewcommand{\Join}{\bigvee}
\newcommand{\meet}{\wedge}
\newcommand{\Meet}{\bigwedge}
\newcommand{\la}{\langle}
\newcommand{\ra}{\rangle}
\renewcommand{\leq}{\leqslant}
\renewcommand{\geq}{\geqslant}
\renewcommand{\nleq}{\nleqslant}
\newcommand{\vare}{\varepsilon}
\newcommand{\ol}{\overline}
\newcommand{\ul}{\underline}
\newcommand{\onto}{\twoheadrightarrow}
\newtheorem{thm}{Theorem}[section]
\newtheorem{theorem}[thm]{Theorem}
\newtheorem{cor}[thm]{Corollary}
\newtheorem{lm}[thm]{Lemma}
\newtheorem{notation}[thm]{Notation}
\newtheorem{prob}[thm]{Problem}
\newtheorem{defn}[thm]{Definition}
\newtheorem{fact}[thm]{Fact}
\date{\today}
\begin{document}

\author{J.\,B. Nation}

\address{Department of Mathematics, University of Hawaii, Honolulu, HI 96822.}
\email{jb@math.hawaii.edu}

\author{Gianluca Paolini}

\address{Department of Mathematics ``Giuseppe Peano'', University of Torino, Via Carlo Alberto 10, 10123, Italy.}
\email{gianluca.paolini@unito.it}

\begin{abstract}  
We continue our work on the model theory of free lattices, solving two of the main open problems from our first paper on the subject.
Our main result is that the universal (existential) theory of infinite free lattices is decidable. Our second main result is a proof that finitely generated free lattices are positively distinguishable, as for each $n \geq 1$ there is a positive $\exists \forall$-sentence true in $\mbf F_n$ and false in $\mbf F_{n+1}$. Finally, we show that free lattices are first-order rigid in the class of finitely generated projective lattices, and that a projective lattice has the same existential (universal) theory of an infinite free lattice if and only if it has breadth $> 4$ (i.e., a single existential sentence is sufficient).

\end{abstract}

\thanks{Research of Gianluca Paolini was  supported by project PRIN 2022 ``Models, sets and classifications", prot. 2022TECZJA, and by INdAM Project 2024 (Consolidator grant) ``Groups, Crystals and Classifications''.}

\title[Elementary properties of free lattices II]{Elementary properties of free lattices II: decidability of the universal theory}

\maketitle  

\section{Introduction}

The model theory of free objects, particularly free groups, has produced many deep and beautiful results. Building on this tradition, in \cite{NaPa14} we began investigating the model theory of free lattices, focusing on questions of decidability and elementary equivalence. Three key questions from~\cite{NaPa14} have remained open:
\begin{itemize}
    \item Is the $\forall$-theory of an infinite free lattice decidable?
    \item Can finitely generated free lattices be distinguished by positive first-order logic (i.e., first-order logic without negations)?
    \item Is each finitely generated free lattice the unique finitely generated model of its first-order theory (i.e., is it first-order rigid)?
\end{itemize}

In this paper, we resolve the first two questions affirmatively and provide new insights on the third. The decidability of the $\forall$-theory of free lattices represents our most significant contribution and will be our primary focus. We start by introducing this problem in more detail. 
Let $\mathbf{F}_{\lambda}$ denote the free lattice on $\lambda$-many generators. 
The free lattice $\mathbf{F}_{\lambda}$ is infinite whenever $\lambda \geq 3$.
It is well known that all countable infinite free lattices are bi-embeddable, 
that is, $\mbf F_m$ and $\mbf F_n$ are bi-embeddable whenever $3 \leq m, n \leq \aleph_0$.
Moreover, easily, for any uncountable cardinal $\kappa$, $\mathbf{F}_{\aleph_0}$ is an elementary substructure of $\mathbf{F}_{\kappa}$.   
Consequently, all infinite free lattices 
share the same universal (and thus also the same existential) theory. This uniformity justifies our discussion of ``the $\forall$-theory of an infinite free lattice'' as a well-defined concept. To the best of our knowledge, nothing is known about this problem, apart from the decidability of the word problem for free lattices, which dates back to Whitman's fundamental paper \cite{PMW1941} from 1941. Actually, at this point, a historical remark seems in order, as this will be relevant for our proof strategy toward decidability. Skolem, as part of his 1920 paper which proves the L\"owenheim-Skolem Theorem \cite{ThS}, solved the word problem not only for free lattices, but for finitely
presented lattices as well. However, by the time the great awakening of lattice theory occurred in the 1930’s, his solution had been forgotten. Thus, Whitman’s 1941 construction of free lattices became the standard reference on the subject. It was not until 1992 that Stan Burris rediscovered Skolem’s solution. In fact, to be precise, Skolem also proved another result, i.e., that the universal Horn theory of lattices is decidable (see e.g. \cite[Chapter~9]{hodges} on Horn sentences). This is the starting point of our algorithm for deciding the truth value of a universal (equiv. existential) sentence in free lattices, which we now outline.

\medskip    Recall that every first-order sentence can be put into prenex form, moving the quantifiers to the front.  
Then the statement can be written in a disjunctive normal form.
We consider sentences $\psi(\mbf x)$ whose prenex disjunctive normal form is:
\[  \exists x_1 \dots \exists x_m \  \psi_1(\mbf x)  \,\op{OR}\, \dots \,\op{OR}\, \psi_p(\mbf x), \]
where $\mbf x = (x_1, ..., x_m)$ and each $\psi_j(\mbf x)$ is a (finite) conjunction of lattice literals, i.e., $s(\mbf x) \leq t(\mbf x)$ or $u(\mbf x) \not\leq v(\mbf x)$
with $s$, $t$, $u$, $v$ lattice terms in the variables $\mbf x = (x_1, ..., x_m)$.   
The sentence $\psi(\mbf x)$ holds in a free lattice $\mbf F(Y)$ if and only if $\exists \mbf x \ \psi_j(\mbf x)$ holds in $\mbf F(Y)$ for some $1 \leq j \leq p$.  
Now, $\psi_j(\mbf x)$ is like a lattice presentation, except that negations can be included.  
Modifying Skolem's algorithm slightly, we will convert each $\psi_j(\mbf x)$ into a \emph{relational quasilattice with negations}, and use that to produce a \emph{partially defined lattice $\mbf S$ with negations}.  From there we will construct the \emph{partial completion} $\op{PC}(\mbf S)$ (a partially defined lattice in the sense of \ref{def:partdeflat}), and from it a \emph{finitely presented lattice} $\op{FP}(\mbf S)$. 
%
%
%
 The crucial observation is then that the partially defined lattice $\mbf S$ occurs in a free lattice $\mbf F(Y)$ (equivalently, $\mbf F(Y)$ satisfies $\psi_j(\mbf x)$) if and only if there is a retraction of $\op{FP}(\mbf S)$ onto a projective lattice $\mbf G$ that also contains $\mbf S$ (Lemma~\ref{lm:basic}).  Whether or not this retraction of $\op{FP}(\mbf S)$ exists can be tested in the 
 congruence lattice of $\mbf S$, which is finite.
There are various properties to be tested for this, making the criteria for the retraction rather technical, but finite, and that makes our problem decidable, thus establishing the main theorem of our paper:

    \begin{theorem}\label{main_theorem} The universal theory of infinite free lattices is decidable.   
    \end{theorem}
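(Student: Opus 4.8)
The plan is to reduce the decision problem for a universal (equivalently, via negation, an existential) sentence to a finite search in the congruence lattice of an explicitly constructed finite partial structure, following the outline already sketched in the introduction. Concretely, given a universal sentence I would first negate it to obtain an existential sentence, put it in prenex disjunctive normal form $\exists \mbf x\ \bigl(\psi_1(\mbf x)\,\op{OR}\,\dots\,\op{OR}\,\psi_p(\mbf x)\bigr)$ with each $\psi_j$ a finite conjunction of literals $s(\mbf x)\leq t(\mbf x)$ and $u(\mbf x)\not\leq v(\mbf x)$. Since an infinite free lattice satisfies the existential sentence iff it satisfies some single disjunct $\exists\mbf x\,\psi_j(\mbf x)$, it suffices to give a decision procedure for one such $\psi_j$ and then run it on each of the finitely many disjuncts, declaring the original universal sentence true exactly when every disjunct fails.

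Next I would treat a fixed $\psi_j(\mbf x)$ as a lattice presentation enriched with negative constraints. Following the modification of Skolem's algorithm, I would convert $\psi_j$ into a relational quasilattice with negations, then into a partially defined lattice with negations $\mbf S$, form its partial completion $\op{PC}(\mbf S)$ in the sense of Definition~\ref{def:partdeflat}, and from that the finitely presented lattice $\op{FP}(\mbf S)$. All of these objects are finite and effectively computable from $\psi_j$, since the number of subterms of the finitely many terms appearing in $\psi_j$ is finite and the closure operations defining $\op{PC}$ and $\op{FP}$ terminate. The key structural input is Lemma~\ref{lm:basic}: $\mbf S$ embeds into an infinite free lattice $\mbf F(Y)$ (equivalently $\mbf F(Y)\models\exists\mbf x\,\psi_j(\mbf x)$) if and only if there is a retraction of $\op{FP}(\mbf S)$ onto a projective lattice $\mbf G$ that still contains $\mbf S$. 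This converts a question about an infinite free lattice into a question about the finite object $\op{FP}(\mbf S)$ together with a retraction.

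The decisive step is to show that the existence of such a retraction can be tested finitely. A retraction corresponds to an endomorphism of $\op{FP}(\mbf S)$ that is idempotent, whose kernel is a congruence under which $\mbf S$ survives intact, and whose image is projective; all of this data lives in the congruence lattice $\op{Con}(\op{FP}(\mbf S))$, which is finite because $\op{FP}(\mbf S)$ is a finitely presented — hence finite, in the relevant cases, or at least finitely describable — lattice. I would enumerate the candidate congruences, and for each one verify the (several, technical but individually decidable) conditions guaranteeing that the corresponding quotient is projective, that it contains an isomorphic copy of $\mbf S$, and that the negative literals of $\psi_j$ remain satisfied, i.e. that no forbidden order relation $u\leq v$ is forced. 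Since there are finitely many congruences and each test is a finite computation, this yields a terminating decision procedure for $\exists\mbf x\,\psi_j(\mbf x)$, and assembling the procedures over the disjuncts decides the original sentence.

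I expect the main obstacle to be the projectivity-testing step of the preceding paragraph: making precise, and verifying effectively, exactly which congruences of $\op{FP}(\mbf S)$ yield a projective quotient containing $\mbf S$ while respecting the negative constraints. This is where the ``various properties to be tested'' and the ``rather technical but finite'' criteria alluded to in the introduction must be pinned down; the correctness of the algorithm rests entirely on Lemma~\ref{lm:basic} faithfully characterizing embeddability into $\mbf F(Y)$ by this retraction property, and on the finiteness of $\op{Con}(\op{FP}(\mbf S))$ guaranteeing termination. Once those are in hand, decidability follows immediately, since the whole procedure is a finite search over a finite congruence lattice repeated over finitely many disjuncts.
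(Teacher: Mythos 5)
Your proposal tracks the paper faithfully up through the Basic Lemma (Lemma~\ref{lm:basic}), but it then rests on a claim that is false and that the paper is specifically engineered to avoid: you assert that $\op{FP}(\mbf S)$ is ``finite, in the relevant cases, or at least finitely describable'' and that $\op{Con}(\op{FP}(\mbf S))$ is finite, so that one can enumerate its congruences. Finitely presented lattices are typically infinite --- in the extreme case $R_+ = R_- = \varnothing$ the lattice $\op{FP}(\mbf S)$ is the free lattice $\mbf F(X)$ itself, infinite for $|X| \geq 3$, with an infinite congruence lattice --- and indeed the paper builds $\op{FP}(\mbf S)$ as the inverse limit of an infinite tower of interval doublings (Theorem~\ref{thm:daycons}). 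So the finite search you describe has no finite search space, and your procedure does not terminate; the acknowledged ``main obstacle'' (testing projectivity of quotients) is real, but it is downstream of this more basic failure.

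What the paper does instead, and what your outline is missing, is a two-step replacement for your enumeration. First, it proves a genuine reduction theorem (Theorem~\ref{thm:reduction}): a \emph{standardized} presentation $\la \mbf S, R_- \ra$ occurs in a free lattice if and only if it occurs in some \emph{finite bounded} lattice. Standardization (Definition~\ref{def_standard}, Lemma~\ref{lm:std}) is not cosmetic; it is exactly what makes $\mbf S$ stable for doubling (Lemma~\ref{stabledoub}), which is needed to push a realization in a finite bounded lattice up the tower of W-cover doublings in the converse direction, while the forward direction projects the projective retract $\mbf G$ down through the inverse system to a finite bounded image preserving some negation. Second, the algorithm runs in $\op{Con}\,\mbf S$ --- the congruence lattice of the \emph{finite partially defined lattice} $\mbf S$, not of $\op{FP}(\mbf S)$ --- which is finite: one computes the distributive reflection congruence $\delta$ from the finitely many PDL homomorphisms $\mbf S \onto \mbf 2$, then refines it by finitely many simulated interval doublings (Lemmas~\ref{lm:split} and~\ref{lm:bdedrecur}, with at most $|S|-1$ refinement steps) down to the bounded reflection congruence $\beta$, and finally checks $u \nleq v$ modulo $\beta$ for each $(u,v) \in R_-$. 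Without the reduction to finite bounded lattices and the passage from $\op{Con}(\op{FP}(\mbf S))$ to $\op{Con}\,\mbf S$, your argument cannot be repaired as written.
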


    We recall that by {\em projective lattice} we mean a projective object in the category of lattices. In analogy with the theory of hyperbolic groups, we say that a projective lattice is non-elementary if it embeds $\mbf{F}_3$. Now, since every countable, non-elementary projective lattice is bi-embeddable with $\mbf{F}_3$, every non-elementary projective lattice has the same $\forall$-theory as $\mbf{F}_3$. So naturally we ask if we can separate the $\forall$-theories of elementary and non-elementary projective lattices. We answer this question with a result of independent interest:

    \begin{thm}\label{intermediate_theorem} Let $L$ be a lattice satisfying Whitman's Axiom $(W)$, then the following are equivalent:
    \begin{enumerate}[(1)]
        \item $L$ embeds $\mbf{F}_3$;
        \item $L$ has breadth $> 4$.
    \end{enumerate}
 \end{thm}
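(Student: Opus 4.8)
The plan is to prove the two implications separately, with $(2)\Rightarrow(1)$ carrying essentially all of the content. For $(1)\Rightarrow(2)$ I would not use $(W)$ at all: since breadth is monotone under sublattice embeddings, it suffices to observe that $\mbf F_3$ itself has infinite breadth. This follows from the bi-embeddability facts recalled in the introduction, namely $\mbf F_{\aleph_0}\hookrightarrow \mbf F_3$ and hence $\mbf F_n\hookrightarrow \mbf F_3$ for every $n$; the $n$ free generators of $\mbf F_n$ form an irredundant join (none lies below the join of the others), and a sublattice embedding preserves joins and order, so $\mbf F_3$ contains an irredundant join of $n$ elements for every $n$. Thus $\mbf F_3$, and any lattice embedding it, has breadth $>4$.

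For $(2)\Rightarrow(1)$, unwinding the definition of breadth produces five elements $a_1,\dots,a_5\in L$ whose join $e=a_1\vee\cdots\vee a_5$ is irredundant, i.e. $a_i\not\le \bigvee_{j\ne i}a_j$ for each $i$. The first step is to normalize this configuration: using $(W)$ together with finitely many meet- and join-manipulations I would replace the $a_i$ by an antichain whose pairwise meets and the relevant join/meet covers are as trivial as possible, so that the base incidences among the $a_i$ and their joins and meets agree with those among free generators. The point of the threshold $5$ (rather than $4$) is precisely that an irredundant join of five elements supplies enough incomparabilities to force these base relations to be the free ones, whereas breadth $4$ would still permit a collapsing relation. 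Note that $(W)$ is indispensable here: the Boolean lattice $2^5$ has breadth $5$ yet does not embed $\mbf F_3$, and indeed it fails $(W)$.

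The heart of the argument is then to exhibit three elements $p,q,r$ --- built as explicit lattice terms in the normalized $a_i$ --- and to prove that the canonical homomorphism $\mbf F_3\to L$ sending the free generators to $p,q,r$ is injective. The key technical device is Whitman's solution to the word problem: in any lattice satisfying $(W)$, the relation $s(p,q,r)\le t(p,q,r)$ is governed by the same $(W)$-driven recursion on the complexity of the terms $s,t$ that computes $\le$ in $\mbf F_3$. I would therefore argue by induction on term complexity that $s(p,q,r)\le t(p,q,r)$ holds in $L$ if and only if $\mbf F_3\models s\le t$, the forward implications coming from $(W)$ in $L$ and the reverse (non-collapsing) implications coming from the irredundancy of $e$ secured in the normalization step.

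The main obstacle I anticipate is exactly this inductive verification of freeness: $(W)$ in $L$ yields the implications in one direction essentially for free, but ensuring that no unexpected comparison $s(p,q,r)\le t(p,q,r)$ holds --- equivalently, that the base configuration never lets the recursion terminate incorrectly --- is where the breadth-$5$ hypothesis must be used in full, and where the choice of the terms defining $p,q,r$ is delicate. Getting the normalization to deliver precisely the incidences needed to close the induction is the crux of the proof.
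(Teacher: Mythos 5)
Your $(1)\Rightarrow(2)$ direction is fine (it is the direction the paper leaves implicit, and your argument via $\mbf F_n \hookrightarrow \mbf F_3$ and monotonicity of breadth under sublattices is the standard one). The problem is $(2)\Rightarrow(1)$, where your proposal has a genuine gap: every step that carries content --- the ``normalization'' of $a_1,\dots,a_5$, the choice of the terms defining $p,q,r$, and the induction on term complexity showing that the induced map $\mbf F_3 \to L$ is injective --- is left as a plan, and you concede yourself that closing the induction is ``the crux.'' Moreover, the normalization you ask for is stronger than what is available: breadth $\geq 5$ together with $(\mathrm W)$ gives you an irredundant join $a_1 \vee \cdots \vee a_5$, but no control whatsoever over pairwise meets or other incidences among the $a_i$, so forcing the ``base incidences'' to agree with those of free generators is not something you can arrange in general; the successful argument never needs the $a_i$ to resemble free generators. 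Your explanation of why the threshold is $5$ rather than $4$ (``enough incomparabilities'') is also not the actual mechanism.

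The paper's proof of $(2)\Rightarrow(1)$ is two lines, by reduction to Fact~\ref{the_fact_for_breadth} (Theorem~1.30 of \cite{FJN}): if $a_1,a_2,a_3,v$ satisfy $a_i \nleq a_j \vee a_k \vee v$ for $\{i,j,k\}=\{1,2,3\}$, $v \nleq a_i$, and $v$ is \emph{meet irreducible}, then $L$ contains a copy of $\mbf F_3$. The only observation needed beyond the citation is that in a lattice satisfying $(\mathrm W)$ no element is both properly join reducible and properly meet reducible: if $c \wedge d = x = a \vee b$ with $c,d > x$ and $a,b < x$, then $(\mathrm W)$ applied to $c \wedge d \leq a \vee b$ yields $c \leq x$, $d \leq x$, $x \leq a$, or $x \leq b$, each a contradiction. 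Hence $v := a_4 \vee a_5$, which is properly join reducible (irredundancy forces $a_4, a_5 < v$), must be meet irreducible, and irredundancy of the five-fold join gives the two incomparability hypotheses of the Fact directly. This is exactly where the fifth element is used: it exists so that $v$ can be taken to be a proper join, hence meet irreducible --- with only four elements, $v$ would have to be some $a_i$, which need not be meet irreducible. If you wanted to salvage your own route, note that the term-complexity induction is off the shelf (in a $(\mathrm W)$-lattice any independent set generates a free sublattice, Corollary~1.12 of \cite{FJN}, which the paper invokes elsewhere); the genuinely missing content in your sketch is the construction of three suitable elements from $a_1,a_2,a_3,v$, and that construction is precisely the substance of the cited Theorem~1.30.
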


    \begin{cor}\label{intermediate_theorem_cor} A projective lattice has the same $\forall$-theory (equiv.~$\exists$-theory) of an infinite free lattice if and only if it has breadth $> 4$.
\end{cor}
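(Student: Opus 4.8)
The plan is to deduce Corollary~\ref{intermediate_theorem_cor} from Theorem~\ref{intermediate_theorem} together with two auxiliary observations: that ``breadth $> 4$'' is captured by a single existential sentence, and that bi-embeddability with $\mbf F_3$ forces equality of existential theories. First I record that the condition of having breadth at least $5$ is expressed by the existential sentence
\[ \sigma \;:=\; \exists x_1 \cdots \exists x_5 \; \bigwedge_{i=1}^{5} \Bigl( \bigwedge_{j \neq i} x_j \;\nleq\; x_i \Bigr), \]
whose matrix is a finite conjunction of negated inequalities; a structure satisfies $\sigma$ exactly when it contains an irredundant meet of five elements. Since an infinite free lattice has infinite breadth, it satisfies $\sigma$, so $\sigma$ lies in the existential theory of the infinite free lattice. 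I also note that every projective lattice, being a retract and hence a sublattice of a free lattice, satisfies Whitman's Axiom $(W)$, as $(W)$ is a universal sentence and is therefore inherited by sublattices.

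For the forward direction, suppose the projective lattice $P$ has the same $\forall$-theory, equivalently the same $\exists$-theory, as an infinite free lattice. Because $\sigma$ belongs to the existential theory of the infinite free lattice, it belongs to the existential theory of $P$, so $P \models \sigma$; that is, $P$ has breadth $> 4$.

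For the reverse direction, suppose $P$ is projective with breadth $> 4$. Since $P$ satisfies $(W)$, Theorem~\ref{intermediate_theorem} yields an embedding $\mbf F_3 \hookrightarrow P$. Existential (quantifier-free) formulas are preserved by embeddings, so $\exists\text{-Th}(\mbf F_3) \subseteq \exists\text{-Th}(P)$. On the other hand, projectivity gives an embedding of $P$ into some free lattice $\mbf F(Y)$, and since all infinite free lattices share the existential theory of $\mbf F_3$, we obtain $\exists\text{-Th}(P) \subseteq \exists\text{-Th}(\mbf F(Y)) = \exists\text{-Th}(\mbf F_3)$. Combining the two inclusions gives $\exists\text{-Th}(P) = \exists\text{-Th}(\mbf F_3)$, and dually the two lattices share the same $\forall$-theory.

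The substantive work is entirely absorbed into Theorem~\ref{intermediate_theorem}; the only points requiring care here are verifying that the breadth condition is genuinely first-order existential in the bare lattice signature (so that it transfers along the equality of theories), and checking that the two-sided embedding $\mbf F_3 \hookrightarrow P \hookrightarrow \mbf F(Y)$ pins down the existential theory without any countability hypothesis on $P$. I expect this sandwich argument, rather than any hard lattice-theoretic input, to be the only place where one must be attentive to the direction of preservation under embeddings and to the fact that the existential theory, not merely bi-embeddability with a fixed countable copy, is what is being transferred.
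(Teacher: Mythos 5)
Your proof is correct and takes essentially the same route as the paper: the paper's (very terse) proof of Corollary~\ref{intermediate_theorem_cor} likewise combines the observation from Section~4 that breadth $>4$ is expressible by a single existential sentence with Theorem~\ref{intermediate_theorem} and the sandwich $\mbf F_3 \hookrightarrow \mbf L \hookrightarrow \mbf F(Y)$ furnished by projectivity (Theorem~\ref{thm:retract}). Your explicit transfer of existential theories along the two embeddings, rather than invoking literal bi-embeddability with $\mbf F_3$, is a mild clarification of the paper's phrase ``bi-embeddable with an infinite free lattice'' that also covers uncountable $\mbf L$, but it is the same argument.
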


Notice that in both \ref{intermediate_theorem} and \ref{intermediate_theorem_cor} the number $4$ cannot be replaced with $3$ as the lattice of Figure 1.1 of \cite{FJN} shows. Notice also that many infinite projective lattices which do not embed $\mbf{F}_3$ are known in the literature, the most famous example is probably the lattice $\mathbf{F}(\mbf{2+2})$ (cf.~\cite[pg. 105]{FJN}), i.e., the free lattice over a partial order set consisting of two independent $2$-element chains. This lattice is diagrammed in Figure~\ref{fig:fl22}; using the diagram one can easily verify that this lattice has breadth $\leq 4$ (and so it does not embed~$\mathbf{F}_3$). 

    \begin{figure}
    \centering
    \includegraphics[width=0.6\textwidth]{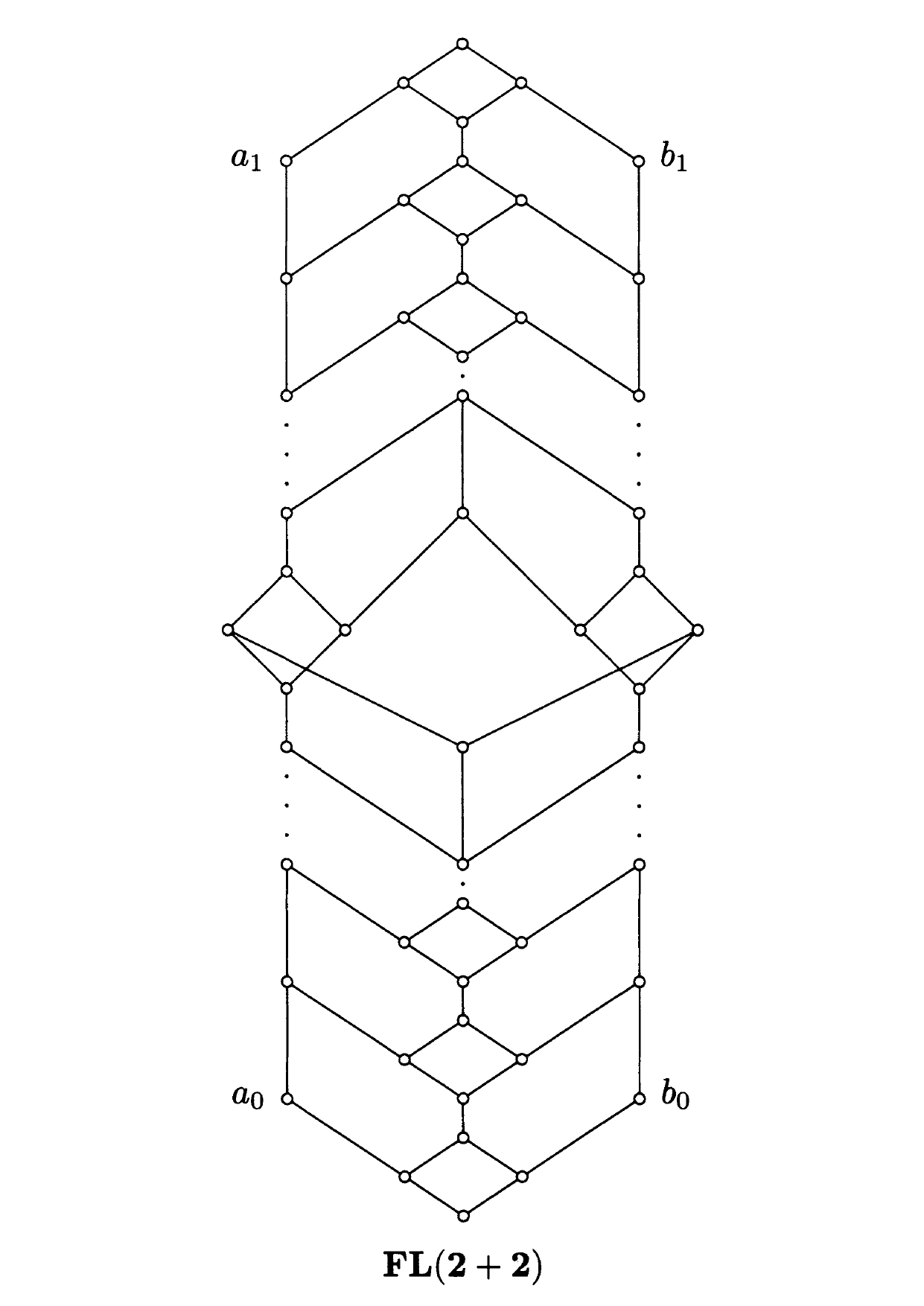}
    \caption{The free lattice over $\mbf{2+2}$}
    \label{fig:fl22}
\end{figure}

    We now move to the second main theorem of our paper, which is concerned with positive logic. In \cite{NaPa14} we showed that $\mbf{F}_3$ and $\mbf{F}_4$ were positively distinguishable, but a general solution for arbitrary $n$ has eluded us. This made us naturally wonder if this was a limitation of our methods or if there was an intrinsic reason for this. In the subsequent theorem we solve this problem, showing that the situation is uniform in $n$. Interestingly, we will show in Section~\ref{positive_other_varieties} that our proof actually applies to many other varieties of lattices (see there for details on which varieties).

      \begin{theorem}\label{second_theorem} Finitely generated free lattices are positively distinguishable, as for each $n \geq 1$ there is a positive $\exists \forall$-sentence true in $\mbf F_n$ and false in $\mbf F_{n+1}$.  
    \end{theorem}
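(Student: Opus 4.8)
The plan is to exploit the fact that positive formulas---atomic formulas $s \leq t$ combined only with $\wedge$, $\vee$, $\exists$ and $\forall$---are preserved under surjective homomorphisms. Concretely, if $\theta(\mbf x, \mbf y)$ is positive and quantifier-free and $f \colon L \onto M$ is onto, then $L \models \forall \mbf y\, \theta(\mbf a, \mbf y)$ implies $M \models \forall \mbf y\, \theta(f\mbf a, \mbf y)$: any tuple of $M$ lifts along $f$, and homomorphisms preserve the truth of positive quantifier-free formulas. Since the assignment $x_{n+1} \mapsto x_n$, $x_i \mapsto x_i$ ($i \leq n$) extends to a retraction $\mbf F_{n+1} \onto \mbf F_n$, this observation already forces the direction of any positive separating sentence: a positive sentence true in $\mbf F_{n+1}$ is automatically true in $\mbf F_n$, so the only possibility is a sentence true in $\mbf F_n$ and false in $\mbf F_{n+1}$, exactly as the statement asserts.

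Building on this, I would search for a sentence of the shape $\sigma_n := \exists a_1 \cdots \exists a_n\ \forall y\ \theta_n(a_1, \dots, a_n, y)$ with $\theta_n$ positive and quantifier-free, engineered so that its universal matrix $\gamma(\mbf a) := \forall y\, \theta_n(\mbf a, y)$ behaves as a positive surrogate for the assertion ``$\{a_1, \dots, a_n\}$ generates the whole lattice''. Two things would then have to be checked. First, the free generators $x_1, \dots, x_n$ of $\mbf F_n$ should witness $\gamma$, giving $\mbf F_n \models \sigma_n$. Second, one must rule out $\mbf F_{n+1} \models \sigma_n$, and here the guiding external input is the lattice-theoretic fact that $\mbf F_{n+1}$ is \emph{not} $n$-generated: a $k$-generated distributive lattice is a quotient of the finite free distributive lattice $\op{FD}(k)$, and $|\op{FD}(n)| < |\op{FD}(n+1)|$, so the distributive quotient $\mbf F_{n+1} \onto \op{FD}(n+1)$ shows that $\mbf F_{n+1}$ cannot be generated by $n$ elements. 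Thus, to the extent that $\gamma(\mbf a)$ forces $\langle a_1, \dots, a_n\rangle = L$, any witness in $\mbf F_{n+1}$ would exhibit it as $n$-generated, a contradiction.

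The verification in $\mbf F_n$ would rest on the special behaviour of free generators: each $x_i$ is both join-prime and meet-prime, and $x_i \nleq \bigvee_{k \neq i} x_k$, so the $x_i$ form a join- and meet-irredundant antichain, and Whitman's condition $(W)$ together with semidistributivity should be the tools for converting these facts into the truth of $\forall y\, \theta_n$. The refutation in $\mbf F_{n+1}$ is where the main obstacle lies, because full generation by $\mbf a$ is \emph{not} literally first-order expressible (terms in $\mbf a$ have unbounded depth), so $\theta_n$, being positive and universal, is necessarily too weak to say ``every element is a term in $\mbf a$'' yet must still be strong enough that \emph{no} $n$-tuple of $\mbf F_{n+1}$ satisfies it. I therefore expect the heart of the proof to be a direct structural argument: given any candidate $b_1, \dots, b_n \in \mbf F_{n+1}$, produce an explicit $y$ violating $\theta_n(\mbf b, y)$---morally, a witness coming from the extra independent direction supplied by the $(n+1)$-st generator---while the genuine generators of $\mbf F_n$ admit none. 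Phrasing $\theta_n$ so as to thread this needle, positive and universal but still discriminating, is the delicate point on which the whole theorem turns.

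Finally, I would isolate exactly which facts the argument consumes---surjective preservation of positive formulas, the failure of $n$-generability of the rank-$(n+1)$ relatively free algebra, and the verification/refutation of the chosen $\theta_n$---since only these are variety-specific. This would account for the generalization promised in Section~\ref{positive_other_varieties}: any variety whose relatively free algebras share these features (in particular enough of Whitman's condition and semidistributivity, or suitable analogues, to run the $\mbf F_n$-side verification) should admit the same positive $\exists\forall$ separation, uniformly in $n$.
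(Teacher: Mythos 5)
Your proposal correctly pins down the framing: positive sentences pass down surjections, so any separating sentence must be true in $\mbf F_n$ and false in $\mbf F_{n+1}$ (this is exactly the paper's observation $\op{PTh}(\mbf F_n) \supseteq \op{PTh}(\mbf F_{n+1})$), and you rightly recognize that ``$\mbf a$ generates $L$'' is not first-order, so the universal matrix must be some weaker discriminating property. But the proof stops exactly where the theorem begins: the matrix $\theta_n$ is never constructed, and you concede yourself that phrasing it ``is the delicate point on which the whole theorem turns.'' That construction is the entire content of the result, and your intended surrogate (``forces generation,'' refuted via the non-$n$-generability of $\mbf F_{n+1}$ through $\op{FD}$ cardinalities) cannot be carried out as stated: any positive matrix weak enough to hold of the generators of $\mbf F_n$ cannot imply generation in $\mbf F_{n+1}$, so the $\op{FD}$-counting argument has nothing to attach to, and one is thrown back on a bare structural refutation with no candidate sentence in hand. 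The single-variable shape $\forall y\,\theta_n(\mbf a,y)$ is also likely too weak; the paper needs universal quantification over $(n+1)$-tuples.

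What the paper actually does is replace ``generation'' by a genuinely first-order \emph{confinement} property. The sentence $\pi^n$ asserts: there exist $z_1,\dots,z_n$ with $\Join z_i$ the top and $\Meet z_i$ the bottom, such that \emph{every} $(n+1)$-tuple $\mbf x$ either fails independence --- expressed by the positive predicate $\op{NI}(\mbf x)$, a disjunction of inclusions $x_i \leq \sum_{j\ne i} x_j$ and $x_i \geq \prod_{j\ne i}x_j$ --- or is \emph{located} in one of finitely many vectors of intervals $\beta_a(\mbf z) \leq x_j \leq \alpha_a(\mbf z)$. The terms $\beta_a,\alpha_a$ come from the key technical input you are missing: the bounded homomorphism $h_n : \mbf F_n \to \mbf A_n$, where $\mbf A_n$ is $\mbf{FD}_n$ with joins of pairs of atoms and meets of pairs of coatoms doubled, whose kernel classes are intervals with term-definable endpoints (Lemma~\ref{lm:3cases}; the doubling, motivated by Tschantz's theorem, is needed because $\mbf{FD}_n$ alone does not control where independent sets sit). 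Truth in $\mbf F_n$ is then immediate from the definition of the finite list $\msc E$ of admissible locations, and failure in $\mbf F_{n+1}$ is shown by instantiating the universal variables at the standard generators --- independent, so $\op{NI}$ fails --- and running a join/meet-primeness and splitting argument to show some generator $x_j$ escapes every prescribed interval. Your closing remarks on the variety generalization are directionally right but keyed to the wrong hypotheses: Section~\ref{positive_other_varieties} requires $\mbf A_n \in \msc V$ and primeness of the relatively free generators, and explicitly dispenses with Whitman's condition, which your sketch leans on.
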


     Having solved two of the main open problems from \cite{NaPa14} we now move to the third problem mentioned at the beginning of the introduction. In this respect, in \cite{NaPa14} we showed that every model of $\mathrm{Th}(\mbf{F}_n)$ admits a canonical projection on the profinite-bounded completion $\mbf{H}_n$ of $\mbf{F}_n$, and that this lattice is isomorphic to the Dedekind-MacNeille completion of $\mbf{F}_n$. From this we deduced that $\mbf{F}_n$ and $\mbf{H}_n$ are not elementary equivalent. This gave us some information on models of $\mbf{F}_n$ (and so also on finitely generated models of $\mbf{F}_n$), but it did not allow us for a full solution to the problem of first-order rigidity of finitely generated free lattices. This problem appears to us to be untractable at the moment in its general form. On the other hand, we will show that if we restrict to projective lattices, then we can obtain a strong form of first-order rigidity. Recall from above that every non-elementary projective lattice shares the same $\forall$-theory of an infinite free lattice. In contrast:

    \begin{theorem}\label{first-order_rig_th} Free lattices are first-order rigid in the class of finitely generated projective lattices, i.e., if $\mbf{L}$ is a finitely generated projective lattice and there is $3 \leq n < \omega$ such that $\mbf{L}$ is elementary equivalent to $\mbf{F}_n$ then $\mbf{L} \cong \mbf{F}_n$.
    \end{theorem}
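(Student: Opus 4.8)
The plan is to prove Theorem~\ref{first-order_rig_th} by combining the structural rigidity of projective lattices with the positive distinguishing sentences from Theorem~\ref{second_theorem}. Suppose $\mbf{L}$ is a finitely generated projective lattice that is elementarily equivalent to $\mbf{F}_n$ for some fixed $3 \leq n < \omega$. The first observation is that $\mbf{L}$, being finitely generated and projective, is a retract of some finitely generated free lattice and in particular satisfies Whitman's condition $(W)$; moreover it is a \emph{finite-width} object (it does not embed $\mbf{F}_3$ exactly when its breadth is bounded, by Theorem~\ref{intermediate_theorem}). So the very first thing I would determine is whether $\mbf{L}$ is elementary or non-elementary in the sense defined above. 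Here I would use the breadth dichotomy: since $\mbf{F}_n$ for $n \geq 3$ embeds $\mbf{F}_3$ and hence has breadth $>4$, any lattice elementarily equivalent to it must also satisfy the first-order sentence asserting breadth $>4$ (breadth is first-order expressible by an $\exists\forall$-sentence), so $\mbf{L}$ has breadth $>4$, embeds $\mbf{F}_3$, and is therefore non-elementary. Thus $\mbf{L}$ is a finitely generated, non-elementary projective lattice.

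The second and central step is to recover the exact number of generators $n$ from the first-order theory. This is where Theorem~\ref{second_theorem} does the decisive work: for each $k$ there is a positive $\exists\forall$-sentence $\sigma_k$ true in $\mbf F_k$ and false in $\mbf F_{k+1}$. Since these sentences are first-order, and $\mbf{L} \equiv \mbf{F}_n$, the lattice $\mbf{L}$ satisfies exactly the same $\sigma_k$'s as $\mbf{F}_n$ does. The task is to pin down, from membership in the classes defined by the $\sigma_k$, a single numerical invariant that forces $\mbf{L}$ to be generated by precisely $n$ elements. Concretely I would show that the minimal number of generators of a finitely generated projective lattice is an elementary invariant among projective lattices, read off from which of the separating sentences hold; since $\mbf{F}_n$ has minimal generating number $n$ and $\mbf{L} \equiv \mbf{F}_n$, the lattice $\mbf{L}$ is generated by $n$ elements and no fewer.

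The final step is to upgrade ``$\mbf{L}$ is an $n$-generated non-elementary projective lattice satisfying the same separating sentences as $\mbf{F}_n$'' to an actual isomorphism $\mbf{L} \cong \mbf{F}_n$. Here I would invoke the structure theory of projective lattices: a finitely generated projective lattice is determined, up to isomorphism, by a finite amount of combinatorial data (its generating poset and the covering/join-irreducible structure subject to $(W)$), and the free lattice $\mbf{F}_n$ is the universal/largest such object on $n$ generators. The goal is to argue that any $n$-generated projective $\mbf{L}$ with the full first-order theory of $\mbf{F}_n$ cannot be a proper retract: a proper projective retract of $\mbf{F}_n$ would fail one of the positive $\exists\forall$ sentences that measure the presence of the requisite free substructure on $n$ independent generators, contradicting $\mbf{L} \equiv \mbf{F}_n$. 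Hence the natural generating map $\mbf{F}_n \to \mbf{L}$ must be an isomorphism.

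I expect the main obstacle to be the second step, namely expressing the \emph{minimal} number of generators as a genuine first-order (elementary) invariant and ruling out that a projective lattice on fewer or differently arranged generators could accidentally satisfy all the same separating sentences $\sigma_k$. The sentences of Theorem~\ref{second_theorem} distinguish the free lattices $\mbf{F}_n$ from one another, but transferring that separation to the whole class of finitely generated projective lattices requires knowing that these sentences interact correctly with retractions and with the $(W)$-structure; controlling the positive $\exists\forall$ quantifier complexity against the projective (non-elementary) retraction machinery, rather than against the free lattices directly, is the delicate part. The breadth dichotomy of Theorem~\ref{intermediate_theorem} and the explicit separating sentences are exactly the tools that should make this tractable, reducing the whole argument to a finite combinatorial comparison inside the projective structure.
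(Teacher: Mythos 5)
There is a genuine gap here, and it sits exactly where you predicted the difficulty would be, in your second and third steps. The decisive error is in step 3: positive sentences are preserved under surjective homomorphisms, so every retract of $\mbf F_n$ (being a homomorphic image) satisfies \emph{all} positive sentences true in $\mbf F_n$; this is exactly the inclusion $\op{PTh}(\mbf F_n) \subseteq \op{PTh}(\mbf F_{n+1}$-image$)$ used in Section 5 of the paper. Consequently a proper projective retract of $\mbf F_n$ can never ``fail one of the positive $\exists\forall$ sentences'' of Theorem~\ref{second_theorem} --- those sentences transfer in the wrong direction for your purpose. What distinguishes $\mbf F_n$ from a proper retract would be the \emph{failure} of such a sentence, i.e.\ the existence of an independent tuple in a prescribed location, which is the negation of a positive statement, and nothing in your argument shows such a witness transfers to $\mbf L$. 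Similarly, in step 2 the claim that the minimal number of generators is an elementary invariant among finitely generated projective lattices is asserted but never proved, and it is also beside the point: the hypothesis $\mbf L \equiv \mbf F_n$ already fixes the entire theory of $\mbf L$, so ``which $\sigma_k$ hold in $\mbf L$'' carries no information beyond the hypothesis. The whole difficulty of the theorem is converting elementary equivalence into isomorphism, and your proposal assumes this conversion at the end (``the natural generating map $\mbf F_n \to \mbf L$ must be an isomorphism'') rather than proving it.

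The paper's proof uses an entirely different mechanism, which your proposal lacks: since $\mbf F_n$ is a prime model of its theory, one may assume $\mbf F_n \preccurlyeq \mbf L$ with $\mbf L$ generated by a finite set of join irreducibles, and the goal becomes showing there is no join irreducible in $\mbf L \setminus \mbf F_n$. This is done by writing first-order approximations $\delta_k$ to membership in the classes $\op D_k(\mbf F_n)$ (exploiting that the join irreducibles of $\op D_k(\mbf F_n)$ form the finite set $X^{\wedge(\vee\wedge)^k}$), transferring along elementary equivalence the implication ``if every $v$ with $u \, D\, v$ satisfies $\delta_k$ then $u$ satisfies $\delta_{k+1}$,'' and concluding that a join irreducible $u \in \mbf L \setminus \mbf F_n$ would force either an infinite $D$-sequence or $D$-covers of unbounded rank. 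Both alternatives contradict $\op D(\mbf L) = \mbf L$, which holds because a finitely generated projective lattice is (lower) bounded (Theorems~\ref{thm:kos} and~\ref{thm:proj}). Note that projectivity enters only through boundedness; Theorems~\ref{intermediate_theorem} and~\ref{second_theorem} play no role in the paper's argument, and your step 1 (the breadth dichotomy), while correct, does no work toward the conclusion.
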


    Thus, the question of first-order rigidity of finitely generated free lattice reduces to the following fundamental question which we shall leave open:

    \begin{prob} Is there a non-projective (equivalently, non-bounded) finitely generated lattice $\mbf{L}$ elementary equivalent to a finitely generated free lattice?
    \end{prob}

    The other major open problem on the model theory of free lattices is:

    \begin{prob} Is the full first-order theory of an infinite free lattice decidable?
    \end{prob}

    A few words on the structure of the paper. Section~2 is devoted to proving the fundamental lemmas that form the basis of our strategy for the decidability of the universal theory of free lattices, while in Section~3 we proceed with its implementation, thus establishing our Main Theorem, i.e., Theorem~\ref{main_theorem}. In Section~4 we prove Theorem~\ref{intermediate_theorem}. In Section~5 we prove our main theorem
    on positive distinguishability, i.e., Theorem~\ref{second_theorem}. Finally, in Section~6 we prove Theorem~\ref{first-order_rig_th}.

\subsection{Notation}

Throughout we use boldface to denote $k$-tuples, 
so that for example $\mbf x = (x_1, \dots, x_k)$, etc.
The notation $\downarrow\! x$ denotes $\{ u \in L : u \leq x \}$, and dually $\uparrow\! x$ denotes $\{ v \in L : v \geq x \}$. Following \cite{FJN}, we denote lattices with boldface letters.

\section{The strategy for decidability of the universal theory}

In this section, we lay out our strategy for a proof of decidability of the universal theory of free lattices. For the benefit of the reader, we repeat the outline of our strategy from the introduction. Every first-order sentence can be put in prenex form, moving the quantifiers to the front.  
Then the statement can be written in a disjunctive normal form.
We consider sentences $\psi(\mbf x)$ whose prenex disjunctive normal form is:
\[  \exists x_1 \dots \exists x_m \  \psi_1(\mbf x)  \,\op{OR}\, \dots \,\op{OR}\, \psi_p(\mbf x), \]
where $\mbf x = (x_1, ..., x_m)$ and each $\psi_j(\mbf x)$ is a (finite) conjunction of lattice literals, i.e., $s(\mbf x) \leq t(\mbf x)$ or $u(\mbf x) \not\leq v(\mbf x)$
with $s$, $t$, $u$, $v$ lattice terms in the variables $\mbf x = (x_1, ..., x_m)$.   
The sentence $\psi(\mbf x)$ holds in a free lattice $\mbf F(Y)$ if and only if $\exists \mbf x \ \psi_j(\mbf x)$ holds in $\mbf F(Y)$ for some $1 \leq j \leq p$.  

Now $\psi_j(\mbf x)$ is like a lattice presentation, except that negations can be included.  
So we follow Section 2-3.2 of Freese and Nation \cite{FN_STA} on finitely presented lattices, which in turn are based on Skolem \cite{ThS}.
Modifying Skolem only slightly, we will convert each $\psi_j(\mbf x)$ to a \emph{relational quasilattice with negations}, and use that to produce a \emph{partially defined lattice $\mbf S$ with negations}.  From there we will construct the \emph{partial completion} $\op{PC}(\mbf S)$ (a partially defined lattice in the sense of \ref{def:partdeflat}), and from it the \emph{finitely presented lattice} $\op{FP}(\mbf S)$. We introduce a useful definition.

 \begin{defn}\label{def_occurs} Let $\mbf S$ be a partially
defined lattice. We say that \emph{$\mbf S$ occurs in $\mbf F(Y)$} if there is a (partial lattice) homomorphism 
$\eta : \mbf S \to \mbf F(Y)$ preserving negations. 
\end{defn}

Notice that $\mbf S$ occurs in the free lattice $\mbf F(Y)$ if and only if $\mbf F(Y)$ satisfies $\psi_j(\mbf x)$, where $\psi_j(\mbf x)$ is the formula used to construct $\mbf S$.

\smallskip 

 The crucial observation is then that the partially defined lattice $\mbf S$ occurs in a free lattice $\mbf F(Y)$ (equivalently, $\mbf F(Y)$ satisfies $\psi_j(\mbf x)$) if and only if there is a retraction of $\op{FP}(\mbf S)$ onto a projective lattice $\mbf G$ that also contains $\mbf S$ (Lemma~\ref{lm:basic}).  Whether or not this retraction of $\op{FP}(\mbf S)$ exists can be tested in the 
 congruence lattice of $\mbf S$. 
 The criteria for the retraction to exist are rather technical, but finite, and that makes our problem decidable, thus establishing the main theorem of our paper, i.e., Theorem \ref{main_theorem}.

 \medskip

This section is devoted to proving all the necessary lemmas on which our strategy relies, while in the next one we will prove that the existence of the retraction of $\op{FP}(\mbf S)$ can be tested in the finite lattice $\op{Con} \,\mbf S$. This section is divided into four subsection. In the first one we review Skolem's algorithm to decide whether an existential sentence is true in all lattices, and use it to construct the lattice $\op{PC}(\mbf S)$. The second subsection is devoted to the construction of the lattice $\op{FP}(\mbf S)$. The third section presents the necessary background on bounded and projective lattices, toward a proof in the fourth subsection of our ``Basic Lemma'' (i.e., Lemma~\ref{lm:basic}) connecting satisfiability of the formula $\psi_j(\mbf x)$ in $\mbf{F}(Y)$ and the existence of a retraction of $\op{FP}(\mbf S)$ onto a projective~lattice~$\mbf{G}$.

\subsection{Skolem's algorithm and the lattice $\op{PC}(\mbf S)$}

So let us begin.  Recall that \emph{lattice terms} over an alphabet $X$ are defined recursively:
\begin{itemize}
    \item each $x \in X$ is a term;
    \item if $r$ and $s$ are terms, then $(r \join s)$ and $(r \meet s)$ are terms;
    \item only expressions obtained by these rules are terms.
\end{itemize}
In particular, we regard $\join$ and $\meet$ as binary operations; with minimal adjustment one can treat them as finitary operations.

The set of subterms of a term is defined as usual:
if $t$ is a variable then $\{t\}$ is its set of subterms, and if $t = t_1 \join t_2$ or $t = t_1 \meet t_2$ then the set of subterms is the union of $\{t\}$ and the subterms of $t_i$ for $i = 1,2$.

Dropping the subscript ``$j$" on $\psi$, we are given a formula $\psi(\mbf x)$ which is a conjunction of lattice literals, and thus can be written in the form
\[ \psi(\mbf x) \qquad s_1 \leq t_1 \ \&\  \dots \ \&\  s_k \leq t_k \ \&\  u_1 \nleq v_1 \ \&\  \dots u_\ell \nleq v_\ell  \]
where $s_i$, $t_i$, $u_i$, $v_i$ are all terms in $X = \{x_1, ..., x_m\}$, where we recall that $\mbf x = (x_1, ..., x_m)$.
Let us record such a \emph{lattice presentation with negations} as a triple $\la X, R_+, R_- \ra$ where $X$ is the set of variables and we let:
\begin{align*}
    R_+ &= \{ s_1 \leq t_1, \dots, s_k \leq t_k \}, \\
    R_- &= \{ u_1 \nleq v_1, \dots, u_\ell \nleq v_\ell \}. 
\end{align*}

\begin{defn}\label{def_quasilattice}
A set $U$ with three relations $\leq$, $\join$, and $\meet$ of arities 2, 3 and~3, respectively,
is a \emph{relational quasilattice} if it satisfies
the following axioms (i)--(v) and the duals of (iii)--(v): 
\begin{enumerate}[(i)]
\item $\leq $ is reflexive.
\item $\leq $ is transitive.
\item If $(x,y,z) \in \meet$, then $(z,x)$ and $(z,y)$ are in $\leq$. 
\item If $(x,y,z) \in \meet$ and both $(u,x)$ and $(u,y)$ are in $\leq$ then $(u,z)$ is in $\leq$.
\item  If $(x,y,z) \in \meet$ and 
$(x,x')$, $(x',x),$ 
$(y,y')$, $(y',y),$ 
$(z,z')$, $(z',z)$ are all in~$\leq$, then $(x',y',z') \in \meet$.   
\end{enumerate}
\end{defn}
Skolem has a part (vi), which was copied into \cite{FN_STA}, but is never used and best not included.  
It says $(\forall x)(\forall y)(\exists z)$ such that $(x,y,z) \in \meet$.  

Now it is possible that a lattice presentation with negations is inconsistent within lattice theory.  
This would happen when:
\begin{equation*}
(\dagger) \qquad\quad
\&_{1 \leq i \leq k} \; s_i \leq t_i \implies u_j \leq v_j 
\end{equation*}
for some pair with $u_j \nleq v_j$ in $R_-$.
An example is the presentation
\[  a \leq b \join c      \quad\text{AND}\quad    d \leq a \join b \quad\text{AND}\quad   d \nleq b \join c  \] 
This is inconsistent with lattice theory {\em itself}, and not only with the theory of free lattices.  

Skolem's algorithm, Theorem~\ref{skolem} below, tests for inconsistency in the presentation.
The algorithm creates a relational structure $(U,\leq,\join,\meet)$,
where $U$ is the set of all subterms of the terms occurring in $R_+ \cup R_-$.  All three relations $\leq, \wedge, \vee$ are initially empty.

\begin{thm}[Skolem's Algorithm]\label{skolem}
The following polynomial time algorithm 
decides if the implication $(\dagger)$ is valid. Let
$U$ be the set of all subterms of the terms $s_i$, $t_i$, $u_j$, $v_j$ in $R_+ \cup R_-$. 
\begin{enumerate}[(a)]
\item For $i = 1,\ldots,k$, add $(s_i,t_i)$ to~ $\leq$.
\item If $r = r_1 \join r_2$ is a subterm, then add $(r_1,r_2,r)$ to $\join$.
\item If $r = r_1 \meet r_2$ is a subterm, then add $(r_1,r_2,r)$ to $\meet$.
\item Close $(U,\leq,\join,\meet)$ under \emph{(i)--(iv)} and the duals of \emph{(iii)}~and~\emph{(iv)}. 
\end{enumerate}
Once we are done constructing $U$ we check if in $U$ we have that $(u_j,v_j)$ is in $\leq$; if so, then $(\dagger)$ is
true; otherwise not.
\end{thm}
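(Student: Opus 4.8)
The plan is to establish the two directions separately: \textbf{soundness} (if the algorithm places $(u_j,v_j)$ in $\leq$, then $(\dagger)$ is valid) and \textbf{completeness} (if it does not, then $(\dagger)$ fails in some lattice). Throughout, for a lattice $\mbf L$ with an evaluation of the variables, write $\bar t = t^{\mbf L}(\bar{\mbf x})$ for the value of a term $t \in U$. For soundness, I would fix any lattice $\mbf L$ and any assignment $x \mapsto \bar x$ (for $x \in X$) making every $s_i \leq t_i$ true, and extend it to all of $U$ by term evaluation. Then I would prove, by induction on the saturation process of Theorem~\ref{skolem}, that the following invariant is maintained: whenever $(x,y) \in \leq$ we have $\bar x \leq \bar y$; whenever $(x,y,z) \in \join$ we have $\bar z = \bar x \join \bar y$; and whenever $(x,y,z) \in \meet$ we have $\bar z = \bar x \meet \bar y$. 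The base cases (a)--(c) are immediate from the choice of assignment and the definition of term evaluation (each subterm has a unique parse), and every closure rule is a valid lattice inference: reflexivity and transitivity are clear; axioms (iii),(iv) just restate that a meet is a lower bound and in fact the greatest lower bound; axiom (v) is harmless because $x \equiv x'$ (i.e.\ $(x,x'),(x',x) \in \leq$) forces $\bar x = \bar{x'}$; and the duals are symmetric. Hence if the algorithm terminates with $(u_j,v_j) \in \leq$, then $\bar{u_j} \leq \bar{v_j}$ in every such $\mbf L$, which is exactly the conclusion of $(\dagger)$.

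The completeness direction carries the real content, and constructing a witnessing lattice from the failure $(u_j,v_j) \notin \leq$ is the step I expect to be the main obstacle. After closure, $\leq$ is a preorder on $U$ by (i),(ii); let $\equiv$ be its symmetric part and $P = U/{\equiv}$ the induced poset. Axioms (iii),(iv) say precisely that whenever $(x,y,z) \in \meet$ the class $[z]$ is the infimum of $[x],[y]$ in $P$, dually for $\join$, and axiom (v) together with its dual guarantees that these partial operations are well defined on $\equiv$-classes. Thus $P$ is a finite \emph{partial lattice}: a poset whose partial meet and join, where defined, coincide with the poset infimum and supremum. I would then embed $P$ into a genuine lattice via the Dedekind--MacNeille completion $\mbf L := \overline{P}$, invoking the classical fact that the canonical map $\varphi \colon P \hookrightarrow \overline{P}$ is order preserving, order reflecting, and preserves all existing infima and suprema. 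The reason this is the right construction is instructive: the down-set (ideal) lattice would preserve the partial meets but destroy the joins, and the filter lattice does the reverse, whereas the Dedekind--MacNeille completion is exactly the construction that repairs both simultaneously.

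To close the loop I would assign each variable $x$ the value $\varphi([x]) \in \mbf L$. An easy induction on term structure, using that $\varphi$ preserves the partial operations recorded in steps (b),(c), shows that every $t \in U$ evaluates to $\varphi([t])$. Since each $(s_i,t_i)$ was inserted in step (a), order preservation gives $\bar{s_i} = \varphi([s_i]) \leq \varphi([t_i]) = \bar{t_i}$, so all hypotheses of $(\dagger)$ hold in $\mbf L$; and since $(u_j,v_j) \notin \leq$ after closure, order reflection gives $\varphi([u_j]) \not\leq \varphi([v_j])$, i.e.\ $u_j \nleq v_j$ under this evaluation. Hence $(\dagger)$ fails in $\mbf L$, establishing the contrapositive. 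I emphasize that this argument uses only the elementary Dedekind--MacNeille construction and so is not circular with the later passage to $\op{FP}(\mbf S)$.

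Finally, the polynomial-time claim is routine and I would dispatch it briefly: $|U|$ is linear in the length of the presentation $\la X, R_+, R_- \ra$, the relations $\leq$, $\join$, $\meet$ have sizes $O(|U|^2)$ and $O(|U|^3)$, and the closure under (i)--(v) and their duals is a standard monotone saturation in which each new tuple is added at most once, so the whole procedure runs in time polynomial in $|U|$.
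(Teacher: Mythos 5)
Your proof is correct, and both directions are argued soundly: the soundness invariant is exactly right (note that in step (d) only the relation $\leq$ ever grows, since rules (i)--(iv) and the duals of (iii)--(iv) add only pairs to $\leq$, which makes your induction even easier than you suggest), and the completeness construction via the Dedekind--MacNeille completion of the quotient poset $\ol U$ is a valid witness, since the defined partial joins and meets are genuine suprema and infima in $\ol U$ by axioms (iii)--(iv) and MacNeille preserves all existing sups and infs while reflecting the order. However, your route differs from the paper's at precisely the point you flag as the main obstacle. The paper (following Freese--Nation, to whose Theorem~2-3.2 it defers for the full proof) takes the witnessing lattice to be the \emph{ideal lattice} $\op{Idl}(\ol U, \leq, \Join, \Meet)$, embedding $\mbf S$ via $s \mapsto\, \downarrow\! s$ (Lemma~\ref{lm:embed}). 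Your parenthetical claim that ``the down-set lattice would preserve the partial meets but destroy the joins'' is the standard fact for order-ideals of a bare poset, but it is false for ideals of a partially defined lattice as the paper defines them: a PDL ideal is required to be closed under the \emph{defined} joins, and since a defined join is a least upper bound, the ideal generated by $\downarrow\! s\, \cup \downarrow\! t$ equals $\downarrow\!(s \join t)$ whenever $s \join t$ is defined, while defined meets are preserved because intersections of principal ideals match by axiom~(iv) --- this is exactly the content of Lemma~\ref{lm:embed}. So the construction you dismiss is the one the paper uses, and deliberately: the ideal and filter lattices are needed anyway to build $\op{PC}(\mbf S)$ as the sublattice of $(\op{Id}\,\mbf S) \times (\op{Fil}\,\mbf S)$ generated by the diagonal, so the ideal-lattice witness integrates with the rest of the decidability machinery, whereas your MacNeille argument is a self-contained classical alternative that buys independence from the later constructions at the cost of invoking a completion the paper never needs. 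One further small inaccuracy: well-definedness of the partial operations on $\equiv$-classes does not require axiom (v) (which step (d) does not close under anyway); it already follows from (iii)--(iv), since greatest lower bounds of equivalent pairs are equivalent. None of this affects the correctness of your argument.
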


Of course, when the implication is true for any $j$, the presentation is inconsistent.
The proof is given as Theorem~2-3.2 in \cite{FN_STA}.
We need to extract some information from that proof, as it contains the construction of the partially defined lattice from which we get the partial completion of $\op{PC}(\mbf S)$, which is the objet of interest of the present section.

After completing part (d), the relation $\leq$ becomes a
quasiorder by axioms~(i) and~(ii) from \ref{def_quasilattice}. 
Moreover, axiom~(v) and its dual hold for $(U, \leq, \join, \meet)$.
Let $\equiv$ be the equivalence
relation associated with this quasiorder, where $u\equiv v$ if $(u,v)$
and $(v,u)$ are in $\leq$. Let $\ol u$ be the equivalence
class of~$u$ and $\ol U = \{\ol u : u \in U\}$. Of
course $\ol U$ is an ordered set. Also note that if 
$(a,b,c)\in\join$ then by axioms~(iii) and~(iv), $\ol c$ is the least upper bound of $\ol a$ and $\ol b$ under the order of $\ol U$. 

\begin{defn}\index{Partially defined lattice}%
\label{def:partdeflat}%
A \emph{partially defined lattice} ($\mathrm{PDL}$, for short)
is a partially ordered set $(P,\leq)$ together with two partial
functions, $\Join$ and $\Meet$, from subsets of $P$ into $P$ such that
if $p = \Join S$ then $p$ is the least upper bound of $S$ in
$(P,\leq)$, and dually. We use $(P,\leq,\Join,\Meet)$ to denote this structure.
\end{defn}

By the above remarks, $(\ol U, \leq, \Join,\Meet)$ is a partially defined lattice where $\Join$ is given by the rule:
for each $(a,b,c) \in \join$ we have
$\Join\{\ol a, \ol b\} = \ol c$, and dually for $\Meet$. 

Lemma \ref{lm:embed} below shows that a partially defined lattice is embedded into its ideal lattice, in this case $\op{Idl}(\ol U,\le,\Join,\Meet)$. 
Since $\ol s_i \leq \ol t_i$ for $i = 1,\ldots,k$, if $\ol u_j \nleq \ol v_j$ then this substitution into $\op{Idl}(\ol U, \leq, \Join,\Meet)$ witnesses the failure of~$(\dagger)$. 

Let us now rename $\mbf S = (\ol U, \leq, \Join,\Meet)$, and since we still need to keep track of the non-inclusions in $R_-$, call the pair $\la \mbf S, R_- \ra$ the \emph{partially defined lattice with negations} determined by the presentation $\la X, R_+, R_- \ra$ (assuming it is consistent, i.e., that it has passed Skolem's test). Note that the construction of $\op{PC}(\mbf S)$ which we will introduce below contains terms and subterms from both $R_+$ and $R_-$, and so in particular it depends on both $R_+$~and~$R_-$. This point is slightly obscured by the notation $\mbf S$; thus we invite the reader to keep this in mind despite the notation.

    \begin{defn}\label{S_homo} Given $\la \mbf S, R_- \ra$ and a partially defined lattice $\mbf K$, a map $h: \mbf S \to \mbf K$ is an \emph{$\mbf S$-homomorphism} if it is a partially defined lattice (PDL) homomorphism such that $h(u) \nleq h(v)$ for all $u \nleq v$ in $R_-$.
We say that \emph{$\mbf S$ occurs in $\mbf K$} if there is an $\mbf S$-homomorphism $h : \mbf S \to \mbf K$. 
\end{defn}

An \emph{ideal} $I$ in a partially defined lattice  
$\mbf S$ is a subset of $\mbf S$ such that if $a\in I$ 
and $b \leq a$ then $b\in I$, and if $a_1,\ldots,a_k$ are in $I$ and $a = \Join a_i$ is defined then $a \in I$.
It is worth pointing out that these two rules may have to be applied repeatedly to find the ideal generated by a set. 
The set of all ideals of $\mbf S$ including the empty ideal, ordered by set inclusion, forms a lattice denoted $\op{Idl}(\mbf S)$.

\begin{lm} \label{lm:embed}
The map $s \mapsto \,\downarrow\! s$ embeds $\mbf S$ into $\op{Idl}(\mbf S)$, preserving the order (and its negation) and all the defined joins and meets. 
\end{lm}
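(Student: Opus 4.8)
The plan is to verify the three required properties of the map $s \mapsto \,\downarrow\! s$ directly, since $\op{Idl}(\mbf S)$ is a genuine lattice and $\mbf S$ is only partially defined, so the content is precisely that the partial structure is reflected faithfully inside the full ideal lattice. First I would check that the map is well defined and order-reflecting: for $s, t \in \mbf S$, one has $\,\downarrow\! s \subseteq \,\downarrow\! t$ in $\op{Idl}(\mbf S)$ if and only if $s \leq t$ in $\mbf S$. The forward direction is immediate from reflexivity of $\leq$ (so $s \in \,\downarrow\! s \subseteq \,\downarrow\! t$ forces $s \leq t$), and the reverse direction is immediate from transitivity. This simultaneously gives injectivity and shows that the embedding preserves both $\leq$ and its negation $\nleq$, which is exactly the clause needed later to transport the non-inclusions of $R_-$ into the ideal lattice.

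Next I would handle the preservation of defined joins. Suppose $a = \Join\{a_1, \dots, a_k\}$ is defined in $\mbf S$, meaning $a$ is the least upper bound of the $a_i$ in the poset $(\mbf S, \leq)$. I want to show $\,\downarrow\! a = \,\downarrow\! a_1 \join \dots \join \,\downarrow\! a_k$, where the join on the right is computed in $\op{Idl}(\mbf S)$, i.e.\ it is the ideal generated by $\bigcup_i \,\downarrow\! a_i$. Since each $a_i \leq a$, every $\,\downarrow\! a_i \subseteq \,\downarrow\! a$, so the generated ideal is contained in $\,\downarrow\! a$. For the reverse inclusion, the key point is that $a$ itself lies in the generated ideal: indeed $a = \Join a_i$ is a defined join of elements $a_i \in \bigcup_i \,\downarrow\! a_i$, so by the closure condition in the definition of an ideal, $a$ belongs to the ideal generated by that union; hence $\,\downarrow\! a$ is contained in it as well. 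This gives equality, so the map carries the partial operation $\Join$ to the genuine lattice join. The argument for $\Meet$ is exactly dual, using that an ideal is downward closed so $\,\downarrow\! a_1 \meet \dots \meet \,\downarrow\! a_k = \bigcap_i \,\downarrow\! a_i$, and that when $a = \Meet a_i$ is defined, $\,\downarrow\! a$ coincides with this intersection by the greatest-lower-bound property.

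I do not expect a serious obstacle here; the statement is essentially the standard fact that the down-set embedding into the ideal completion is faithful, adapted to the partial setting. The one point requiring care is the join case, where the subtlety is that the lattice-theoretic join in $\op{Idl}(\mbf S)$ is the ideal \emph{generated by} the union rather than the union itself, so one must invoke the ideal-closure rule (possibly applied repeatedly, as the excerpt warns) to see that $a$ is actually captured. I would therefore make that single step explicit and treat the order-reflection and meet-preservation as routine verifications flowing from reflexivity, transitivity, and downward closure.
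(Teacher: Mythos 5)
Your proposal is correct and takes essentially the same direct-verification route as the paper, whose entire two-line proof consists of exactly the checks you make explicit: that $\downarrow\! s$ is closed under defined joins (the quasilattice axiom for joins, which is also the fact you use tacitly when concluding that the generated ideal is contained in $\downarrow\! a$), and that a defined meet $s \meet t = w$ gives $\downarrow\! s \,\cap \downarrow\! t = \,\downarrow\! w$ via the greatest-lower-bound axiom. The order-reflection and injectivity observations you add, while needed for the full statement, are routine consequences of reflexivity and transitivity and are left implicit in the paper.
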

\begin{proof}
    If $x$, $y \leq s$ then $x \join y \leq s$ by axiom~(iii) of relational quasilattices.  If $s \meet t$ is defined, say $s \meet t = w$, then $\downarrow\! s \,\cap \downarrow\! t = \,\downarrow\! w$ by axiom~(iv).
\end{proof}

Dually, a \emph{filter} $F$ in $\mbf S$ is a subset of $\mbf S$ such that if $a\in F$ 
and $b \geq a$ then $b\in F$, and if $a_1,\ldots,a_k$ are in $F$ and $a = \Meet a_i$ is defined then $a \in F$.
The set of all filters of $\mbf S$ including the empty filter, \emph{ordered by reverse set inclusion,} forms a lattice denoted $\op{Fil}(\mbf S)$.
Likewise, the map $s \mapsto \,\uparrow\! s$ embeds $\mbf S$ into $\op{Fil}(\mbf S)$.

We are now finally in the position to define our lattice $\op{PC}(\mbf S)$: it is the sublattice of $(\op{Id}\, \mbf S) \times (\op{Fil}\,\mbf S)$ generated by the diagonal (this idea is due to Day, cf.~\cite{RAD92}).


\subsection{Constructing the finitely presented lattice $\op{FP}(\mbf S)$}\label{constructing_the_fin_presented_sec}

Recall that the construction of $\op{PC}(\mbf S)$ which we introduced above contains terms and subterms from both $R_+$ and $R_-$, and so in particular it depends on both $R_+$~and~$R_-$. We will now introduce a finitely presented lattice $\op{FP}(\mbf S)$. This lattice is constructed with $\op{PC}(\mbf S)$ as a starting point (and so it has the information from $R_-$ embedded in it); this lattice might be denoted more precisely as $\op{FP}(\op{PC}(\mbf S))$, but for ease of notation we follow the lighter notation $\op{FP}(\mbf S)$.

    \begin{defn}\label{def_S_presented} Let $\mbf S$ be a finite partially defined lattice. A lattice $\mbf F$ is the \emph{lattice finitely presented by} $\mbf S$ if there is an $\mbf S$-homomorphism (cf.~\ref{S_homo}) $\varphi : \mbf S \to F$ such that $F$ is generated by $\varphi(\mbf S)$ and $F$ satisfies the following mapping property:
if $\mbf L$ is a lattice and $\psi :\mbf S \to \mbf L$ is an $\mbf S$-homomorphism, then there is a lattice homomorphism
$f : \mbf F \to \mbf L$ such that $\psi = f \varphi$.
\end{defn}
Using this definition, it is easy to see that the lattice 
finitely presented by $\mbf S$ is unique up to isomorphism (see the next sentence for details).
This lattice is denoted by $\op{FP}(\mbf S)$. Regarding the existence of $\op{FP}(\mbf S)$, recall that $\mbf S$ arises from a sequence of variables $\mbf x = (x_1, ..., x_m)$, so recalling that $X = \{x_1, ..., x_m\}$, we have that $X$ is our intended generating set for $\mbf S$, and let $\mbf F(X)$ be the free lattice generated by $X$.
Let $\theta$ be the intersection of the kernels of all homomorphisms $\psi\sigma : \mbf F(X) \to \mbf L$ where $\sigma : \mbf F(X) \to \mbf S$ is the interpretation of $\mbf S$ as terms in $\mbf x$, and $\psi : \mbf S \to \mbf L$ is an $\mbf S$-homomorphism (once again, recall \ref{S_homo}), for some lattice $\mbf L$.
Then $\mbf F(X)/\theta$ is the lattice $\op{FP}(\mbf S)$ finitely presented by $\mbf S$ (in the sense of \ref{def_S_presented}).  

The \emph{word problem} for $\op{FP}(\mbf S)$ is: given
terms $u$ and $v$ with variables from $X$, to decide if the
interpretations of $u$ and $v$ in $\op{FP}(\mbf S)$ are equal.
Equivalently, is $(u,v) \in \theta$ (where $\theta$ is the congruence used to define $\op{FP}(\mbf S)$)?

Note that, so long as the presentation of $\mbf S$ is consistent, the non-inclusions $u_j \nleq v_j$ of $R_-$ play a minimal role.  
The terms $u_j(\mbf x)$ and $v_j(\mbf x)$ and their subterms are included in the construction of $\ol U$ in Skolem's algorithm, but \emph{consistency} means there is a model of $\la X,R_+ \ra$ in which $u_j \nleq v_j$.  
Put otherwise, Skolem's algorithm can be used to determine whether an arbitrary universal Horn sentence of the kind:
\begin{equation*}
(\ddagger) \qquad\quad
\&_{1 \leq i \leq k} \; s_i \leq t_i \implies u \leq v 
\end{equation*}
holds in lattice theory, by adding $u \nleq v$ to $R_-$ and testing it.  Thus Skolem's algorithm can be used to solve the word problem for $\op{FP}(\mbf S)$.

But, for better or worse, Skolem's 1920 solution was obscured, and this led to the solutions of Whitman \cite{PMW1941} for free lattices and Dean \cite{Dean1964} for finitely presented lattices, which in turn led to doubling constructions for $\mbf F(X)$ and $\op{FP}(\mbf S)$.

We now explain Dean's solution, as this will be relevant to our proof strategy.  Since $X \subseteq \mbf S$, there are evaluation maps:
\begin{align*}
\vare_1 :\, &\mbf F(X) \to \op{Idl}(\mbf S) \\
\vare_2 :\, &\mbf F(X) \to \op{Fil}(\mbf S)
\end{align*}
with $\vare_1 (x) = \,\downarrow\! x$ and $\vare_2 (x) = \,\uparrow\! x$ for $x \in X$.
Note that the evaluation $\vare_1 (w_1 \join \dots \join w_\ell) = \vare_1 (w_1) \join \dots \join \vare_1(w_\ell)$ which is the ideal generated by $\{ \vare_1 (w_1), \dots,\vare_1(w_\ell) \}$.
The dual statement holds for $\vare_2$, meets, and filters.

\begin{thm}[Dean's Theorem \cite{Dean1964}] \label{deansthm}
Let $u$ and $v$ be terms with variables in $X$. 
Then $u \leq v$ holds in $\op{FP}(\mbf S)$ if and only if one of the following holds:
\begin{enumerate}[(i)]
\item $u\in S$ and $v \in S$ and $u\leq v$ in $\mbf S$;
\item $u = u_1 \join \cdots \join u_k$ and $\forall\, i\; (u_i \leq v)$;
\item $v = v_1 \meet \cdots \meet v_k$ and $\forall\, j \; (u \leq v_j)$;
\item $u\in S$ and $v = v_1 \join \cdots \join v_k$ 
         and $u \in \vare_1 (v_1 \join \dots \join v_k) $;
\item $u = u_1 \meet \cdots \meet u_k$ and $v \in S$ 
        and $v \in \vare_2 (u_1 \meet \dots \meet u_k) $;
\item $u = u_1 \meet \cdots \meet u_k$ and $v = v_1 \join \cdots \join v_m$
      and $\exists \, i\; (u_i \leq v)$
      or $\exists \, j\; (u \leq v_j)$
      or $\exists \, s\in S \;u \leq s \leq v$. 
\end{enumerate}
\end{thm}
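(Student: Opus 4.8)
The plan is to prove Dean's Theorem by establishing that the six conditions (i)--(vi) define a relation, call it $\sqsubseteq$, on terms over $X$, and then showing that $\sqsubseteq$ coincides with the order relation $\leq$ in $\op{FP}(\mbf S)$. The forward direction (if $u \leq v$ in $\op{FP}(\mbf S)$ then one of (i)--(vi) holds) and the backward direction (each of (i)--(vi) forces $u \leq v$ in $\op{FP}(\mbf S)$) will be handled somewhat asymmetrically, and the technical heart lies in verifying that $\sqsubseteq$ is the \emph{smallest} relation compatible with the PDL structure of $\mbf S$ and the lattice axioms.

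First I would verify the easy direction, that each clause (i)--(vi) entails $u \leq v$ in $\op{FP}(\mbf S)$. Clause (i) holds because $\varphi : \mbf S \to \op{FP}(\mbf S)$ is an $\mbf S$-homomorphism, hence order-preserving. Clauses (ii) and (iii) are immediate from the definitions of join and meet as least upper and greatest lower bounds: if every join-component $u_i$ lies below $v$, then so does their join, and dually. Clauses (iv)--(vi) require recalling that the evaluation maps $\vare_1, \vare_2$ land in $\op{Idl}(\mbf S)$ and $\op{Fil}(\mbf S)$ respectively, and that $\op{PC}(\mbf S)$ sits inside the product $\op{Idl}(\mbf S) \times \op{Fil}(\mbf S)$; membership $u \in \vare_1(v_1 \join \dots \join v_k)$ says precisely that the image of $u$ is below the image of the join in the ideal-lattice coordinate, which (together with the role of $\mbf S$ as the diagonal) transports to an inequality in $\op{FP}(\mbf S)$. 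For clause (vi) the three alternatives correspond to the three ways a meet can lie below a join, the last one factoring through an element $s \in S$.

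The harder direction is to show completeness: if $u \leq v$ in $\op{FP}(\mbf S)$ then one of (i)--(vi) must hold. Here I would define $\sqsubseteq$ \emph{recursively} by the clauses (reading (ii)--(vi) as reductions to strictly simpler subterms, with (i) as the base case), prove that $\sqsubseteq$ is reflexive and transitive, and then check that the quotient of the term algebra by the induced equivalence $u \sqsubseteq v \sqsubseteq u$ is a lattice receiving an $\mbf S$-homomorphism from $\mbf S$. By the universal mapping property of $\op{FP}(\mbf S)$ from Definition~\ref{def_S_presented}, the canonical map $\op{FP}(\mbf S) \to (\text{term algebra})/{\sqsubseteq}$ would then witness that $u \leq v$ in $\op{FP}(\mbf S)$ implies $u \sqsubseteq v$, i.e.~one of the clauses holds. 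The main obstacle is precisely the transitivity of $\sqsubseteq$: the recursive definition makes transitivity a nontrivial induction on term complexity, and it is exactly the interplay of clause (vi)'s ``factor through $s \in S$'' alternative with the ideal/filter closure conditions that must be controlled. This is where Whitman's condition $(W)$ and the Skolem closure axioms (iii)--(iv) from Definition~\ref{def_quasilattice}, which guarantee that the embeddings $s \mapsto\, \downarrow\! s$ and $s \mapsto\, \uparrow\! s$ of Lemma~\ref{lm:embed} preserve the partial operations, do the essential work. Once transitivity is secured, reflexivity is trivial and the lattice-axiom verifications are routine meet-join bookkeeping, so I expect the bulk of the proof to be concentrated in that single inductive lemma.
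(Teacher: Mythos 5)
First, a point of comparison: the paper itself gives no proof of this statement --- it is quoted as Dean's Theorem with a citation to \cite{Dean1964} --- so your attempt can only be measured against the classical proof. Your skeleton is indeed that classical route: read (i)--(vi) as a recursive definition of a relation $\sqsubseteq$ on terms, prove it is a quasiorder under which formal joins and meets become least upper and greatest lower bounds, check soundness of each clause, and close the loop with the universal property of Definition~\ref{def_S_presented}. The architecture is right, including your identification of transitivity as the one hard inductive lemma. Two things you must make explicit, though. To invoke the mapping property you need $\psi : \mbf S \to T/{\sqsubseteq}$ to be an $\mbf S$-homomorphism in the sense of Definition~\ref{S_homo}, i.e.\ to preserve the negations in $R_-$; this is where consistency (Skolem's test) enters, via the faithfulness claim that for $u, v \in S$ one has $u \sqsubseteq v$ iff $u \leq v$ in $\mbf S$ --- and since elements of $S$ may themselves be formal joins or meets, clauses (ii)--(vi) can apply to them, so this faithfulness needs its own induction using the quasilattice closure axioms and Lemma~\ref{lm:embed}, not just clause (i). Also, your soundness sketch for (iv)--(vi) via ``$\op{PC}(\mbf S)$ sits inside the product'' is the wrong mechanism: soundness must be verified against \emph{every} $\mbf S$-homomorphism into \emph{every} lattice (that is what the congruence $\theta$ defining $\op{FP}(\mbf S)$ quantifies over), and is proved by induction on the generation of the ideal $\vare_1(v_1 \join \cdots \join v_k)$, not by locating anything inside $\op{PC}(\mbf S)$.

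The genuine error is your appeal to Whitman's condition $(\op W)$ as doing ``the essential work'' in the transitivity induction. $\op{FP}(\mbf S)$ does not in general satisfy $(\op W)$ --- the paper states this explicitly shortly after the theorem (``Of course a finitely presented lattice may fail (W). Indeed, every finite lattice is finitely presented.''), and clause (vi) is precisely the weakened condition $(\op{WS})$, with the extra escape $\exists\, s \in S,\ u \leq s \leq v$. If the inductive step for a configuration $\Meet u_i \sqsubseteq w \sqsubseteq \Join v_j$ tried to split via $(\op W)$, it would fail on any presentation whose finitely presented lattice has a genuine $(\op W)$-failure, and such presentations exist in abundance. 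What the induction actually uses is clause (vi) itself together with the ideal/filter evaluations $\vare_1$, $\vare_2$ and the quasilattice axioms: in the hard case the middle term is traded for an element of $S$ via the third alternative of (vi), and controlling how $\vare_1$ and $\vare_2$ interact with that trade is what carries the argument. Note also that standardization, which is what later makes $\mbf S$ satisfy $(\op W)$, comes after Dean's Theorem in the paper's development and cannot be assumed here. With that ingredient corrected, and the transitivity induction actually carried out (it is the bulk of the real proof, not routine bookkeeping), your outline matches the cited argument.
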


Item (vi) is a variation of Whitman's condition for free lattices, at this point we recall Alan Day's doubling construction as it is tightly related to Whitman's condition, as it will soon be clear.

Let $I$ be an interval of a lattice $\mbf L$ and let $\mbf L[I]$ be the disjoint union $(L-I) \,\dot\cup\, (I \times 2)$.
Order $\mbf L[I]$ by $x \leq y$ if one of the following holds:
\begin{enumerate}[(1)]
\item $x$, $y \in \mbf L-I$ and $x \leq y$ holds in~$\mbf L$;
\item $x$, $y \in I \times2$ and $x\leq y$ holds in~$I \times 2$;
\item $x \in \mbf L- I$, $y= (u,j) \in I \times2$, and $x \leq u$ holds in $\mbf L$; 
\item $x= (v,j) \in I \times 2$, $y \in \mbf \mbf L-I$, and $v\leq y$ holds in $\mbf L$.
\end{enumerate}
There is a natural map~$\lambda$ from $\mbf L[I]$ back onto $\mbf L$ given by
\begin{equation*} 
\lambda(x) =\begin{cases} x&\text{if $x \in \mbf L-I$},\\
                   v&\text{if $x=(v,j)\in I\times2$}.
\end{cases}
\end{equation*}

\begin{thm}[{\cite{RAD77}\label{doubling}}]
Let $I$ be an interval in a lattice $\mbf L$. Then
$\mbf L[I]$ is a lattice and $\lambda : \mbf L[I] \to \mbf L$ is a lattice epimorphism.
\end{thm}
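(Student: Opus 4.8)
The plan is to verify in turn that the relation defined by (1)--(4) is a partial order, that every pair of elements has a join and a meet, and finally that $\lambda$ is a surjective lattice homomorphism. Throughout I would write $\hat x = \lambda(x)$ for the image in $\mbf L$ of an element $x \in \mbf L[I]$, so that $\hat x = x$ when $x \in \mbf L - I$ and $\hat x = v$ when $x = (v,j) \in I \times 2$.

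First I would check that $\leq$ is a partial order. Reflexivity is immediate from (1) and (2), and antisymmetry follows by splitting into cases on which of (1)--(4) produces each of $x \leq y$ and $y \leq x$; the mixed case, with one element in $\mbf L - I$ and the other in $I \times 2$, is ruled out because (3) together with (4) would force $\hat x = \hat y \in I$ for an element of $\mbf L - I$. Transitivity is the first real point: given $x \leq y \leq z$, one distinguishes the eight cases according to whether each of $x,y,z$ lies in $\mbf L - I$ or in $I \times 2$. Most cases reduce directly to transitivity in $\mbf L$ or in $I \times 2$, using that (3) and (4) ignore the second coordinate. The one delicate case is $x = (v,i) \in I \times 2$, $y \in \mbf L - I$, $z = (w,j) \in I \times 2$: here (4) and (3) give $v \leq y \leq w$ with $v, w \in I$, so by convexity of the interval $I$ we would get $y \in I$, contradicting $y \in \mbf L - I$. \textbf{This is exactly where the hypothesis that $I$ is an interval is essential}, and it is the main obstacle I expect — the construction would simply fail to be transitive for a non-convex~$I$.

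Next I would establish that $\lambda$ is order-preserving, i.e.\ $x \leq y$ implies $\hat x \leq \hat y$, which is a one-line check against (1)--(4). This has the useful consequence that every upper bound $z$ of $\{x,y\}$ satisfies $\hat z \geq \hat x \join \hat y =: c$ in $\mbf L$. I then propose to produce the join explicitly by cases on whether $c \in I$. If $c \notin I$, the claim is that $x \join y = c$, viewed in $\mbf L - I$; if $c \in I$, the claim is that $x \join y = (c,k)$, where $k = 1$ when at least one of $x,y$ lies in the upper copy $I \times \{1\}$ and $k = 0$ otherwise. In each case I would verify that the proposed element is an upper bound — using that (3) lets any element of $\mbf L - I$ sit below $(c,k)$ for either value of $k$ — and that it lies below every upper bound $z$, where the only subtlety is the second coordinate: an inside upper bound $z = (w,j)$ of an element $(v,1)$ is itself forced into the upper copy ($j = 1$) by (2), which is precisely what makes $(c,1)$ the least upper bound in that subcase. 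Meets then follow by duality, the construction being self-dual in the sense that $(\mbf L[I])^{\mathrm{op}} \cong \mbf L^{\mathrm{op}}[I^{\mathrm{op}}]$ via the identity on $\mbf L - I$ and $(v,j) \mapsto (v, 1-j)$ on $I \times 2$, since (1) and (2) are self-dual while (3) and (4) interchange.

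Finally, $\lambda$ is visibly surjective, since $x \in \mbf L - I$ equals $\lambda(x)$ and every $u \in I$ equals $\lambda((u,0))$; and the explicit join formula gives $\lambda(x \join y) = c = \hat x \join \hat y = \lambda(x) \join \lambda(y)$ whether or not $c \in I$, the dual computation handling meets. Hence $\lambda$ is a lattice epimorphism and, in particular, $\mbf L[I]$ is a lattice, completing the proof. The heart of the argument is the transitivity step; everything after it is a systematic but routine case analysis.
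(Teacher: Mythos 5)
Your proof is correct: the paper itself states this result without proof (citing Day's original paper \cite{RAD77}), and your argument is the standard direct verification that appears there and in \cite[Sec.~II.3]{FJN}. You correctly identify the two genuinely delicate points — that convexity of $I$ makes the mixed transitivity case $(v,i) \leq y \leq (w,j)$ with $y \in \mbf L - I$ vacuous, and that rule (2) forces an inside upper bound of an element of $I \times \{1\}$ into the upper copy, which is what makes the explicit join formula (and, by the self-duality you note, the meet formula) yield a least upper bound — so nothing is missing.
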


We will revisit doubling in more detail in Subsection~\ref{doubling_lemmas}.

Now Whitman \cite{PMW1941} showed free lattices satisfy the condition:
\begin{align*}
(\op W) \quad \text{if }u = u_1 \meet \cdots \meet u_k \leq v = &\; v_1 \join \cdots \join v_m  \\
\text{then } \exists\, i\in [1, k]\; u_i \leq v &\text{ or } \exists\, j \in [1, m]\; u \leq v_j .     
\end{align*} 
Item (vi) of Dean's Theorem is a variation of (W) for $\op{FP}(\mbf S)$:
\begin{align*}
(\op{WS}) \quad \text{if }u = u_1 \meet \cdots \meet u_k \leq &\; v = v_1 \join \cdots \join v_m  \\
\text{then } \exists\, i \in [1, k] \; u_i \leq v &\text{ or } \exists\, j \in [1, m] \; u \leq v_j \text{ or }\exists\, s\in S \;u \leq s \leq v.   
\end{align*} 
so that (W) is (WS) for the case when $R_+ \,\cup\, R_- = \varnothing$.

\begin{notation}\label{W_failures}
When (W) (resp.~(WS)) fails, we call $[u,v]$ a W- (resp.~WS-) \emph{failure interval}.
A (W)-failure interval $I = [u,v]$ is \emph{$\mbf S$-disjoint} if\/ $I \cap \mbf S = \varnothing$. We now explain how (W)- (resp.~(WS)-) failures are related to the doubling construction.
\end{notation} 
A lattice $\widehat{\mbf L}$ is the \emph{$\mathrm{W}$-cover} of a lattice $\mbf L$ if $\widehat{\mbf L}$ satisfies (W),
and there is a epimorphism $f \colon \widehat{\mbf L} \onto \mbf L$ such that
if $\mbf K$ is a lattice satisfying (W) with a epimorphism 
$g$ onto $\mbf L$, then there is a epimorphism $h \colon \mbf K \onto \hat{\mbf L}$ such that $g = f \circ h$. 
The reader can verify that the $\mathrm{W}$-cover of a lattice is unique and always exists. 
The $\mathrm{WS}$-cover of a lattice containing $\mbf S$ is defined analogously.

If a lattice satisfies (W) then $\widehat{\mbf L} = \mbf L$, of course. 
On the other hand the $\mathrm{W}$-cover of the free distributive lattice on $n$ generators is $\mbf F_n$, as was shown by Alan Day \cite{RAD77}.
In fact he showed $\mbf F_n$ could be constructed from the free distributive lattice on $n$ generators using his doubling construction repeatedly to fix (W)-failures.
This played an important role in his proof that finitely 
generated free lattices are weakly atomic~\cite{RAD77},
i.e., if $u > v$ then there exist $s$, $t$ such that $u \geq s \succ t \geq v$. 

Of course a finitely presented lattice may fail (W). 
Indeed, every finite lattice is finitely presented. Similarly a finitely presented lattice containg $\mbf S$ may fail (WS). But as a W-failure in a lattice can be corrected by a doubling construction, similarly, a WS-failure in a lattice $\mbf L$ containing $\mbf S$ can likewise be corrected by the doubling construction, though this may create new WS-failures (as for the case of W-failures).  
What Day showed in \cite{RAD92} is essentially that $\op{FP}(\mbf S)$ can be constructed from $\op{PC}(\mbf S)$ by repeatedly using the doubling construction to fix (WS)-failures and taking the inverse limit. 

We end our section with a recap of our construction.
The second bullet in the next theorem represents a slight technical adjustment, which will be used in Section~\ref{subsec:part2}.
It allows us to test whether $h(u) \leq h(v)$, when $u \nleq v$ is in $R_-$ and $h: \mbf S \to \mbf L$ is a PDL homomorphism, by checking whether $(u \join v,v)$ is in the equivalence relation $\ker h$.  We need $u \join v$ to be defined in $\mbf S$ for this to happen.

\begin{thm}[Our Construction]\label{thm:daycons}
How to make a finitely presented lattice. 
\begin{enumerate}[(1)]
\item[$\bullet$]  Given is a finite presentation $\la \mathbf{x}, R_+, R_- \ra$.
\item[$\bullet$]  Let $U$ consist of all subterms of $\{ s, t \}$ with $s \leq t$ in $R_+$, and all subterms of $u \join v$ whenever $u \nleq v$ is in $R_-$.
\item[$\bullet$] Convert $U$ to a \emph{partially defined lattice} $\mbf S$ a la Skolem (cf. Theorem~\ref{skolem} and the discuss immediately following it).
\item[$\bullet$] Form the ideal lattice and filter lattice of\/ $\mbf S$, ordered by inclusion and reverse inclusion, respectively.
\item[$\bullet$] Form the partial completion $\op{PC}(\mbf S)$, the sublattice of $(\op{Id}\, \mbf S) \times (\op{Fil}\,\mbf S)$ generated by the diagonal. 
\item[$\bullet$] Iterate:  $\mbf P_0 = \op{PC}(\mbf S)$, and $\mbf P_{k+1}$ fixes the WS-failures in $\mbf P_k$ (equiv. $\mbf P_{k+1}$ fixes the $\mbf S$-disjoint $\mathrm{W}$-failures in $\mbf P_k$). 
        Note there is a homomorphism $\mbf P_{k+1} \onto \mbf P_k$.
\item[$\bullet$] Then $\op{FP}(\mbf S)$ is the sublattice generated by $\mbf S$ of the inverse limit of this sequence.   
\end{enumerate}
\end{thm}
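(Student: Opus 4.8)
The plan is to prove the theorem by identifying the lattice produced by the construction with the \emph{WS-cover} of $\op{PC}(\mbf S)$, and then showing that this WS-cover is precisely the finitely presented lattice $\op{FP}(\mbf S)$ of Definition~\ref{def_S_presented}. The first thing I would record is that the tower is well-founded: since $\mbf S$ is finite, both $\op{Idl}(\mbf S)$ and $\op{Fil}(\mbf S)$ are finite, so $\op{PC}(\mbf S) = \mbf P_0$ is a finite lattice; and since doubling a (finite) interval of a finite lattice again yields a finite lattice by Theorem~\ref{doubling}, every $\mbf P_k$ is finite. Thus $\varprojlim \mbf P_k$ is an inverse limit of finite lattices along the epimorphisms supplied by the doubling maps $\lambda$, and a relation $u \leq v$ holds in the limit exactly when it holds at every level.

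Next I would verify the bookkeeping at each stage. By Lemma~\ref{lm:embed} and the definition of $\op{PC}(\mbf S)$, the diagonal embeds $\mbf S$ into $\mbf P_0$ as a partially defined lattice, and it preserves $R_-$: if $u \nleq v$ lies in $R_-$ then $u \nleq v$ in $\mbf S$, so $\downarrow\! u \not\subseteq\, \downarrow\! v$ in $\op{Idl}(\mbf S)$ and the diagonal images stay incomparable. The key combinatorial identity is that, in any lattice containing $\mbf S$, a WS-failure interval is the same as an $\mbf S$-disjoint W-failure interval (Notation~\ref{W_failures}): comparing the two disjunctions of (W) and (WS), the extra disjunct $\exists\, s \in S\; u \leq s \leq v$ of (WS) is precisely the assertion that the interval meets $\mbf S$. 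Hence fixing a WS-failure means doubling an $\mbf S$-disjoint interval $I$, producing $\mbf P_{k+1} = \mbf P_k[I]$ (or a finite iteration of such doublings, one per current failure) with the epimorphism $\lambda \colon \mbf P_{k+1} \onto \mbf P_k$ of Theorem~\ref{doubling}; because $I \cap \mbf S = \varnothing$, the embedded copy of $\mbf S$ survives untouched. So $\mbf S$ embeds coherently through the whole tower and therefore embeds into $\varprojlim \mbf P_k$ preserving order, its negation, the defined operations, and $R_-$.

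Let $\mbf G$ be the sublattice of $\varprojlim \mbf P_k$ generated by this copy of $\mbf S$. The crucial claim is that $\mbf G$ satisfies (WS), i.e.\ has no $\mbf S$-disjoint W-failure: any WS-failure of $\mbf G$ would project to a WS-failure already present at some finite level $\mbf P_k$, which was scheduled for repair at stage $k+1$, and tracing the repair through $\lambda$ shows the offending interval is no longer a failure in the limit. The delicate content here, that the repairs performed cofinally along the tower genuinely resolve every failure and are not reintroduced at later stages, is exactly Day's inverse-limit analysis in \cite{RAD92}; I expect this to be the main obstacle and would carry it out by invoking his construction together with the finiteness of the $\mbf P_k$, which guarantees only finitely many failures to fix at each stage. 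This step simultaneously shows that $\mbf G$, being the output of Day's canonical doubling procedure applied to $\op{PC}(\mbf S)$, \emph{is} the WS-cover of $\op{PC}(\mbf S)$.

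Finally I would identify this WS-cover with $\op{FP}(\mbf S)$. On one hand $\op{FP}(\mbf S)$ satisfies (WS): this is clause (vi) of Dean's Theorem~\ref{deansthm}. On the other hand the inclusion $\mbf S \hookrightarrow \mbf G$ is an $\mbf S$-homomorphism into a lattice generated by $\mbf S$, so the mapping property of Definition~\ref{def_S_presented} yields a surjection $p \colon \op{FP}(\mbf S) \onto \mbf G$ fixing $\mbf S$. To see that $p$ is injective, note that Dean's Theorem~\ref{deansthm} gives a recursive description of the order relation $u \leq v$ on terms that depends only on the partially defined lattice structure of $\mbf S$ together with the (WS) property; consequently this recursion determines $\leq$ identically in \emph{any} lattice that satisfies (WS) and is generated by $\mbf S$, and in particular in both $\op{FP}(\mbf S)$ and $\mbf G$. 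Thus $u \leq v$ in $\mbf G$ forces $u \leq v$ in $\op{FP}(\mbf S)$, so $p$ is an order isomorphism and $\op{FP}(\mbf S) \cong \mbf G$ (this is also the uniqueness of the WS-cover). Since $\op{FP}(\mbf S)$ carries the mapping property of Definition~\ref{def_S_presented} by construction, this identification shows that the displayed procedure indeed produces the lattice finitely presented by $\mbf S$, as claimed.
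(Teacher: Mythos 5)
Your overall architecture is sound and follows the same route the paper itself takes: the paper gives no in-text proof of Theorem~\ref{thm:daycons}, delegating it to Day \cite{RAD92} and Section~II.7 of \cite{FJN}, and your proposal is essentially a reconstruction of Day's argument. The preparatory steps are all correct: finiteness of each $\mbf P_k$; the diagonal embedding of $\mbf S$ into $\op{PC}(\mbf S)$, which reflects order and hence preserves $R_-$ (by consistency of the presentation via Skolem's algorithm); the identification of WS-failure intervals with $\mbf S$-disjoint W-failure intervals; the observation that $\mbf S$-disjoint doublings leave the copy of $\mbf S$ untouched, so $\mbf S$ embeds coherently into the inverse limit; and deferring the ``repairs are not undone in the limit'' step to Day's inverse-limit analysis is legitimate, since that is exactly what \cite{RAD92} supplies. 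The identification of $\op{FP}(\mbf S)$'s (WS) property with clause (vi) of Theorem~\ref{deansthm}, and the use of the mapping property of Definition~\ref{def_S_presented} to obtain the surjection $p \colon \op{FP}(\mbf S) \onto \mbf G$, are also fine.

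The genuine gap is your injectivity argument for $p$. The principle you invoke --- that Dean's recursion ``determines $\leq$ identically in \emph{any} lattice that satisfies (WS) and is generated by $\mbf S$'' --- is false. Clauses (iv) and (v) of Theorem~\ref{deansthm} are not consequences of (WS): they refer to the evaluations $\vare_1$, $\vare_2$ into $\op{Idl}(\mbf S)$ and $\op{Fil}(\mbf S)$, and a (WS)-lattice generated by an $\mbf S$-homomorphic image of $\mbf S$ can satisfy strictly more inequalities than the recursion permits. Concretely, take $X = \{x,y,z\}$ with $R_+ = \{ x \leq y \}$ and $R_- = \varnothing$; the map $x, y \mapsto a$, $z \mapsto b$ is an $\mbf S$-homomorphism onto the free lattice $\mbf F(\{a,b\})$, which satisfies (W), hence (WS), and is generated by the image of $\mbf S$, yet $y \leq x$ holds there while Dean's recursion (and $\op{FP}(\mbf S)$) denies it. Nor does your parenthetical appeal to uniqueness of the WS-cover repair this: that universal property only produces an epimorphism $\op{FP}(\mbf S) \onto \mbf G$ over $\op{PC}(\mbf S)$, and injectivity is again the whole content. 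The correct argument --- and the reason Day's tower must start at $\op{PC}(\mbf S)$ rather than at an arbitrary model of the presentation --- proves by induction on Dean's recursion that $u \leq v$ in $\mbf G$ implies $u \leq v$ in $\op{FP}(\mbf S)$: the generator-generator case and clauses (iv), (v) are forced by the projection $\pi_0 \colon \mbf G \to \op{PC}(\mbf S)$, whose two coordinates compute precisely $\vare_1$ and $\vare_2$; and in the critical case $u = \Meet u_i \leq v = \Join v_j$, failure of all of Dean's options would exhibit a WS-failure interval in $\mbf G$, contradicting the (WS) property you established for the limit. With that induction in place of your uniqueness claim, the proof closes.
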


In other words, $\op{FP}(\mbf S)$ is the WS-cover of $\op{PC}(\mbf S)$.
The details of this construction are not hard and are presented in \cite{RAD77,RAD_it, RAD92, DGP} and in  Section II.7 of~\cite{FJN}; see also Section~2-8 of~\cite{FN_STA}.

\subsection{Standardization}
For technical reasons, before implementing the strategy, we need to \emph{standardize} the $\mathrm{PDL}$ $\mbf S$ and the negations in $R_-$. 
Remember that the elements of $\mbf S$ are equivalence classes of terms that evaluate the same in the finitely presented lattice $\op{FP}(\mbf S)$ where $\mbf S$ is determined by $\mbf x$ and $R_+$.
Each element of $\mbf S$ can be represented (not necessarily uniquely) by a free lattice terms that are either a formal meet, or a formal join, or both, or neither (a formal generator).  

Recall that we start with a presentation $\la \mbf x,R_+,R_- \ra$ and by Skolem's algorithm produce a $\mathrm{PDL}$ $\mbf S$.  Let $S_+$ denote the positive relations $s(\mbf x) \leq t(\mbf x)$ that hold in $\mbf S$.  Of course $R_+ \subseteq S_+$, but the latter also contains relations that are a consequence of those in $R_+$.
For example, if $t = \Join t_j$ then $S_+$ contains $t_j \leq t$ for all $j$.  But others could be less trivial; that is why we have Skolem's algorithm.  Observe that:
\begin{quotation}
   $\la \mbf x,R_+,R_- \ra$ occurs in a lattice $\mbf L$ if and only if $\la \mbf x,S_+,R_- \ra$ occurs in $\mbf L$.  
\end{quotation}

\begin{defn}\label{def_standard} We say that a presentation $\la \mbf x, R_+, R_- \ra$ with associated $\mathrm{PDL}$ $\mbf S$ is standardized if it satisfies:
\begin{enumerate}[(1)]
    \item  $\mbf S$ satisfies $(\op W)$, that is, there is no relation of the form $\Meet s_i \leq  \Join t_j$ in $S_+$
    unless one of the inclusions $s_i \leq t$ or $s \leq t_j$ is also in $S_+$;
    \item there is no relation of the form $\Join u_i \nleq v$ in $R_-$;
    \item there is no relation of the form $u \nleq \Meet v_j$ in $R_-$.    
\end{enumerate}
\end{defn}

\begin{lm} \label{lm:std} For each presentation $\la \mbf x, R_+, R_- \ra$ there is a finite collection of standardized presentations $\la \mbf x, T_1,U_1 \ra, \dots, \la \mbf x, T_m,U_m \ra$ such that $\la \mbf x, R_+, R_- \ra$ occurs in a lattice $\mbf L$ satisfying $(\op W)$ 
if and only if some $\la \mbf x,T_j,U_j \ra$ occurs in $\mbf L$.   
\end{lm}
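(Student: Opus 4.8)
The plan is to standardize in two stages---first to eliminate $(\op W)$-failures in $\mbf S$ by adjoining positive relations, then to eliminate the offending negations by splitting them into disjunctions---and to show that each stage branches only finitely and terminates. Throughout I keep the set of subterms out of which $\mbf S$ is built \emph{fixed}, equal to the subterms coming from the original presentation $\la \mbf x, R_+, R_- \ra$; thus every presentation produced along the way lives on a quotient of the same finite carrier and differs from its predecessor only by a larger quasiorder $\leq$ (more positive relations) or by simpler negations. Two elementary facts drive every branching. First, in any lattice $\Join_i h(u_i) \leq h(v)$ holds iff $h(u_i) \leq h(v)$ for all $i$, and dually for meets; hence a non-inclusion whose left side is a join, or whose right side is a meet, is equivalent to a finite disjunction of simpler non-inclusions. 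Second, if $\mbf L$ satisfies $(\op W)$ then a relation $\Meet_i h(s_i) \leq \Join_j h(t_j)$ in $\mbf L$ forces one of the Whitman witnesses $h(s_i) \leq h(t)$ or $h(s) \leq h(t_j)$, where $s = \Meet_i s_i$ and $t = \Join_j t_j$.

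\emph{Stage 1 (fixing $(\op W)$).} Run Skolem's algorithm (Theorem~\ref{skolem}) to compute $S_+$. If $\mbf S$ violates condition (1) of \ref{def_standard}, choose a witnessless $(\op W)$-failure $\Meet_i s_i \leq \Join_j t_j$ in $S_+$ and replace the current presentation by the finite family obtained by adjoining to $R_+$ one of the inclusions $s_i \leq t$ (for $i = 1, \dots, k$) or $s \leq t_j$ (for $j = 1, \dots, m$). For any $\mbf L$ satisfying $(\op W)$, the current presentation occurs in $\mbf L$ iff one of these enlarged presentations does: ``if'' is immediate, since an enlarged presentation has a superset of the positive relations and identical negations, so any $\mbf S$-homomorphism witnessing it also witnesses the original; ``only if'' is exactly the Whitman dichotomy applied to the image $\Meet_i h(s_i) \leq \Join_j h(t_j)$. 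Since the adjoined inclusion relates two elements of the finite carrier of $\mbf S$ and was \emph{not} already in $S_+$ (else it would be a witness and the failure would not occur), each such step strictly enlarges $\leq$; as $\leq$ is a relation on a fixed finite set, every branch admits only finitely many Stage~1 steps, after which $\mbf S$ satisfies $(\op W)$.

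\emph{Stage 2 (fixing the negations).} Splitting a negation, as described next, alters neither $R_+$ nor $\leq$, hence leaves $\mbf S$ and the validity of $(\op W)$ untouched; so it is safe to run Stage~2 only after Stage~1 has stabilized, separately on each surviving presentation. If some non-inclusion violates (2), say $\Join_i u_i \nleq v$, replace it by the family of presentations in which this relation is replaced by $u_i \nleq v$, one for each $i$; dually for violations of (3). By the disjunction identity above, the presentation occurs in an arbitrary lattice $\mbf L$ iff one of its splits does. Each split passes from the offending element to one strictly lower (for a join on the left) or strictly higher (for a meet on the right) in the finite poset $\mbf S$, so that a measure such as $\sum_{(u \nleq v) \in R_-} \big(\operatorname{ht}(u) + \operatorname{ht}^{*}(v)\big)$ strictly decreases; thus Stage~2 terminates with all negations obeying (2) and (3), while (1) persists because $\mbf S$ is unchanged.

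Assembling the two stages gives a finite tree of presentations whose leaves are standardized, and composing the per-step biconditionals shows that $\la \mbf x, R_+, R_- \ra$ occurs in a lattice $\mbf L$ satisfying $(\op W)$ iff some leaf $\la \mbf x, T_j, U_j \ra$ does. The main obstacle is the interaction I have deliberately side-stepped: adjoining positive relations in Stage~1 merges classes of $\mbf S$ and can turn a former generator into a join, so that a previously harmless negation becomes a violation of (2) or (3). One must rule out a non-terminating loop between the two corrections; the resolution is that Stage~1 is governed by a quantity (the size of $\leq$) that only grows, so completing Stage~1 first and only afterwards splitting negations---an operation that never changes $\leq$---severs the apparent circularity. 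A secondary point to verify is the degenerate case in which an element of $\mbf S$ is simultaneously a formal meet and a formal join, so that conditions (2)--(3) depend on the chosen representative; here one uses that each split moves to a strictly comparable element, which is what makes the height measure genuinely decrease.
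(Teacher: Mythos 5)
Your proposal follows essentially the same route as the paper's proof: the same three replacement rules (adjoining a Whitman witness $s_i \leq t$ or $s \leq t_j$ for a $(\op W)$-failure $\Meet s_i \leq \Join t_j$ in $S_+$, and splitting a negation $\Join u_i \nleq v$, resp.\ $u \nleq \Meet v_j$, into the disjunction of the $u_i \nleq v$, resp.\ $u \nleq v_j$), with the same per-step biconditionals assembled along a finite branching tree, and your Stage-1 termination measure (the quasiorder $\leq$ can only grow on a fixed finite carrier) is in fact a cleaner justification than the paper's brief count-of-failures remark. One small repair: in Stage~2 the poset height in $\mbf S$ need not strictly decrease, since $\ol{u_i} = \ol{u}$ is possible (e.g.\ when $u_2 \leq u_1$ is already forced, so $\ol{u_1 \join u_2} = \ol{u_1}$); the decreasing quantity should instead be syntactic, say the total size of the terms occurring in $R_-$, which drops because each split passes to a proper subterm.
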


\begin{proof}
We describe a process of replacing a given presentation $\mbf S = \la \mbf x, R_+, R_- \ra$ by a collection of standardized ones. The process shows how to reduce the number of failures of (1)--(3) (cf.~\ref{def_standard}) in $\la \mbf x, S_+, R_- \ra$; at each step we reduce the number of instances of failures of (1)--(3), and this will eventually lead to a finite collection of standardize presentations, as $\la \mbf x, R_+, R_- \ra$ is a finite object.

\smallskip \noindent 

\smallskip \noindent (1) Assume there is a relation $\Meet s_i \leq  \Join t_j$ in $R_+$.
  Set $S_i = R_+  \cup \{( s_i,t) \}$ and $T_j = R_+  \cup \{( s,t_j) \}$.  
  Then $\la \mbf x, R_+,R_- \ra$ occurs in a lattice satisfying (W) if and only if some $\la \mbf x, S_i, R_- \ra$ occurs or some $\la \mbf x,T_j,R_- \ra$ occurs.

\smallskip \noindent (2) If some pair $(u,v)$ is in $R_-$ with $u = \Join u_i$ a formal join, let $U_i = R_- \setminus \{(u,v) \} \cup \{ (u_i,v) \}$.  Then $\la \mbf x,R_+,R_- \ra$ occurs in a lattice if and only if $\la \mbf x, R_+, U_i \ra$ occurs for some $i$. 

\smallskip \noindent (3) Dually, if there is a pair $(u,v)$ in $R_-$ with $v = \Meet v_j$ a formal meet, let $V_j = R_- \setminus \{(u,v) \} \cup \{ (u,v_j) \}$.  Again $\la \mbf x, R_+,R_- \ra$ occurs in a lattice if and only if $\la \mbf z, R_+, V_i \ra$ occurs for some $i$. 
\end{proof}

After making these adjustments, thereby replacing the original problem by a disjunction of standardized problems, we can assume that the elements of $\mbf S$ are formally generators, or meets, or joins, but not both a meet and a join.  Moreover, we can assume that the non-inclusions of $R_-$ are of the form (gen,gen), or (meet, gen), or (gen, join).  

Then apply the Skolem algorithm to each $\la T_j,U_j \ra$ to see if the presentation is consistent with lattice theory. For those that are, form the partial completion $\op{PC}(\mbf T_j)$, and continue as below to see if it can be realized in a free lattice.  

\subsection{Standardization and doubling} \label{doubling_lemmas}

Recall that there is a natural homomorphism $\lambda : L[I] \to L$.  We are interested in the following situation: given a $\mathrm{PDL}$ $\mbf S$ and a $\mathrm{PDL}$ homomorphism $r_0 : \mbf S \to L$, when can we find a $\mathrm{PDL}$ homomorphism $r_1 : \mbf S \to L[I]$ such that $r_0 = \lambda r_1$?  That means $r_1(s)$ will be $r_0(s)$ when $r_0(s) \notin I$, and either $(r_0(s),0)$ or $(r_0(s),1)$ when $r_0(s) \in I$.  
If it is always possible to find such an $r_1$, for any $r_0$, $L$, and $I$, then $\mbf S$ is said to be \emph{stable for doubling}.  

\begin{lm} \label{stabledoub}
If\/ $\la \mbf x,R_+,R_- \ra$ is standardized (cf.~\ref{def_standard})
and\/ $\mbf S$ is the associated $\mathrm{PDL}$,  then $\mbf S$ is stable for doubling.
\end{lm}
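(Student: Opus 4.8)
The plan is to unwind the definition of stability for doubling into a finite combinatorial coloring problem on $\mbf S$ and then solve that problem using Whitman's condition $(\op W)$, which standardization makes available. Fix a $\mathrm{PDL}$ homomorphism $r_0 : \mbf S \to L$ and an interval $I = [a,b]$ of $L$, and set $A = \{ s \in \mbf S : r_0(s) \in I \}$. Any lift $r_1 : \mbf S \to L[I]$ with $\lambda r_1 = r_0$ must satisfy $r_1(s) = r_0(s)$ for $s \notin A$, while for $s \in A$ it must have the form $r_1(s) = (r_0(s), c(s))$ for a value $c(s) \in \{0,1\}$ to be chosen; so the task is to pick $c : A \to \{0,1\}$ making $r_1$ a $\mathrm{PDL}$ homomorphism. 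Reading off the order of $L[I]$ from rules (1)--(4), the sole constraint imposed by order preservation is that $c$ be \emph{monotone} on the subposet $A$ (relations $s \le t$ of $\mbf S$ with at most one endpoint in $A$ are preserved automatically). Using that $\lambda$ preserves joins and meets and computing inside $L[I]$, one finds that a defined join or meet landing outside $I$ is preserved automatically (its $\lambda$-fibre is a single point), whereas for $s = \Join s_i \in A$ and $s = \Meet s_j \in A$ the computations $\bigvee_i r_1(s_i) = (r_0(s), \max\{ c(s_i) : s_i \in A\})$ and $\bigwedge_j r_1(s_j) = (r_0(s), \min\{ c(s_j) : s_j \in A\})$ force
\[ c\Big( \Join_i s_i \Big) = \max\{ c(s_i) : s_i \in A \}, \qquad c\Big( \Meet_j s_j \Big) = \min\{ c(s_j) : s_j \in A \}, \]
with the conventions $\max \varnothing = 0$, $\min \varnothing = 1$. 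Since the presentation is standardized, every element of $\mbf S$ is purely a generator, a formal join, or a formal meet, so at most one of these two equations constrains each element and the requirements are consistent to state.

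My candidate for $c$ is built from the formal meets. Stratify a set $M \subseteq A$ of formal meets as $M = \bigcup_n M_n$, where $M_0$ consists of the formal meets in $A$ with no meetand in $A$, and $M_{n+1}$ consists of the formal meets $w = \Meet_l w_l \in A$ all of whose meetands lying in $A$ belong to $\uparrow_A M_n := \{ t \in A : w' \le t \text{ for some } w' \in M_n \}$; since $\mbf S$ is finite this stabilizes. Put $B = \uparrow_A M$ and set $c(s) = 1$ if and only if $s \in B$. Then $B$ is an up-set of $A$, so $c$ is monotone, and $B$ is closed under the rule ``all $A$-meetands of a formal meet lie in $B$ $\Rightarrow$ the meet lies in $B$''. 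The meet-equation is then immediate: if some $A$-meetand of $\Meet_j s_j$ lies outside $B$ then $\Meet_j s_j \notin B$ (it lies below that meetand and $B$ is an up-set), giving $c = 0 = \min$; and if all its $A$-meetands lie in $B$ then $\Meet_j s_j \in B$, giving $c = 1 = \min$.

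The heart of the proof is the remaining, hard direction of the join-equation: I must show that no \emph{bad join} --- a formal join $s = \Join_i s_i \in A$ all of whose $A$-joinands lie outside $B$ --- belongs to $B$. As every element of $B$ lies above some element of $M$, it suffices to prove by induction on $n$ that no element of $M_n$ lies below a bad join. So let $w = \Meet_l w_l \in M_n$ with $w \le s = \Join_i s_i$ a bad join. The relation $w \le s$ has the form $\Meet \le \Join$ and holds in $\mbf S$, so $(\op W)$ --- guaranteed by standardization, Definition~\ref{def_standard}(1) --- yields either $w \le s_i$ for some $i$ or $w_l \le s$ for some $l$. In the first case $a \le r_0(w) \le r_0(s_i) \le r_0(s) \le b$ forces $s_i \in A$, whence $w \in M \subseteq B$ and the up-closure of $B$ give $s_i \in B$, contradicting that $s$ is bad. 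In the second case $a \le r_0(w) \le r_0(w_l) \le r_0(s) \le b$ forces $w_l \in A$; for $n = 0$ this contradicts that $w$ has no $A$-meetand, and for $n \ge 1$ the $A$-meetand $w_l$ of $w \in M_n$ lies in $\uparrow_A M_{n-1}$, so some $w' \in M_{n-1}$ has $w' \le w_l \le s$, contradicting the induction hypothesis. This completes the induction, hence the join-equation, and with it the verification that $r_1$ is a $\mathrm{PDL}$ homomorphism.

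The crux, and the only genuinely nontrivial point, is this last induction: it is exactly here that standardization is indispensable, since $(\op W)$ is what converts the inequality $w \le s$ between a meet and a join into usable information about their components and lets the argument descend through the strata $M_n$ --- without $(\op W)$ a formal join could be forced up while none of its joinands is. Everything else (order preservation and the join-side forcing) is absorbed into the up-closure defining $B$, so generators require no separate treatment; and note that only clause~(1) of standardization, namely $(\op W)$, is used, the clauses (2)--(3) governing $R_-$ being irrelevant because stability for doubling is a property of the $\mathrm{PDL}$ $\mbf S$ alone. The construction is moreover one of a dual pair --- the least up-set of $A$ closed under meets, or dually the least down-set closed under joins --- either of which furnishes an admissible $c$.
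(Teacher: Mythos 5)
Your proof is correct, and at its core it builds the same lift as the paper: your set $B$ is exactly the paper's recursively defined set $GM$ (your stratum $M_n$ consists precisely of the defined meets witnessing membership in the paper's $GM$-layers, a meetand of a fibre meet automatically satisfying $r_0(u_i) \geq a$, so ``not in $A$'' is the paper's condition $a \leq r_0(u_i) \nleq b$), and both arguments send $B$ to the upper copies and the rest of the fibre to the lower copies. Where you genuinely diverge is the verification. The paper also introduces the dual forced-down set $LJ$ and proves $GM \cap LJ = \varnothing$ by a double induction on the two stratifications, invoking $(\op W)$ at the same point you do, and it leaves the remaining homomorphism checks implicit; you never define $LJ$, but instead characterize lifts completely by monotonicity on $A$ together with the forcing equations $c(\Join s_i)=\max$, $c(\Meet s_j)=\min$ over $A$-joinands and $A$-meetands (your computation of these from the order rules of $L[I]$ is correct, including the empty conventions), and you discharge the single nontrivial obligation --- no bad join lies in $B$ --- by a one-sided induction on the strata of $M$. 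What your route buys is completeness: it explicitly establishes the fact, needed for the join equation but not spelled out in the paper, that a defined join belonging to $GM$ must have an $A$-joinand in $GM$; disjointness of $GM$ and $LJ$ alone does not settle this, since the unconstrained elements are all assigned to the lower copy. Your remarks on hypotheses also match the paper: only clause (1) of Definition~\ref{def_standard}, i.e.\ $(\op W)$, is used, together with the reduction stated after Lemma~\ref{lm:std} that no element of $\mbf S$ is both a formal join and a formal meet, which is indeed what makes your two forcing equations non-conflicting.
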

\begin{proof}
Let $I=[a,b]$.
If $r_0(s) \notin I$, then we must assign $r_1(s)=r_0(s)$.  Thus we consider an elements $s \in \mbf S$ with $r_0(s) \in I$. 

\smallskip \noindent Form the set $GM$ recursively as follows:
\begin{itemize}
 \item $GM_0 = \varnothing$
 \item $s \in GM_{k+1}$ if $s \in \op{GM}_k$, or if $r_0(s) \in I$ and $s \geq \Meet u_i$ in $R_+$, where for all $i$, either $a \leq r_0(u_i) \nleq b$, or $u_i \in GM_k$.  
\end{itemize}
Let $GM = \bigcup_k GM_k$.  
Note that $s \leq t$ and $s \in GM$ implies $t \in GM$.
Then $r_1(s)$ is forced to be $(r_0(s),1)$ if and only if $s \in GM$.  

\smallskip \noindent Form $LJ$ dually:
\begin{itemize}
 \item $LJ_0 = \varnothing$
 \item $s \in LJ_{k+1}$ is $s \in \op{LJ}_k$, or 
 if $r_0(s) \in I$ and $s \leq \Join v_j$ in $R_+$, where for all $j$, either $a \nleq r_0(v_j) \leq b$, or $v_j \in LJ_k$.  
\end{itemize}
Let $LJ = \bigcup_k LJ_k$. 
Note that $w \leq s \in LJ$ implies $w \in LJ$.
Then $r_1(s)$ is forced to be $(r_0(s),0)$ if and only if $s \in LJ$.  

\smallskip \noindent Because $\mbf S$ is standardized,
$GM \cap LJ = \varnothing$.  
Indeed, suppose $z \in GM \cap LJ$.
Then $z \in GM_k \cap LJ_\ell$ for some minimal $(k,\ell)$.  Use double induction on $(k,\ell)$, with $k=0$ or $\ell = 0$ both being impossible.  
Thus $\Meet u_i \leq s \leq \Join v_j$ where for each $i$, either 
either $a \leq r_0(u_i) \nleq b$, or $u_i \in GM_{k-1}$, and for each $j$ either $a \nleq r_0(v_j) \leq b$, or $v_j \in LJ_k$.
Since $\la \mbf x,R_+,R_- \ra$ is standardized, $\mbf S$ satisfies $(\op W)$.  So either $u_i \leq \Join v_j$ for some $i$, or $\Meet u_i \leq v_j$ for some $j$.
If $u_i \leq \Join v_j$ and $u_i \in GM_{k-1}$, then $u_i \in GM_{k-1} \cap LH_\ell$, contradicting the inductive hypothesis.
If $u_i \leq \Join v_j$ and $r_0 (u_i) \nleq b$, then $r_0 (\Join v_j) \nleq b$, contradicting the assumption $r_0(\Join v_j) \in I$.  
The dual cases are handled similarly.  We conclude that $GM \cap LJ = \varnothing$. 

\smallskip \noindent Thus, we can assign:
\[
r_1(s) = \begin{cases} r_0(s) &\text{ if } r_0(s) \notin I, \\
(r_0(s),1) &\text{ if } s \in GM, \\ (r_0(s),0) &\text{ otherwise. }
\end{cases} \]

\end{proof}

\subsection{Bounded and projective lattices}\label{bounded_sec}

In this section, we review the background and characterization of finitely generated projective lattices. This will be relevant for our ``Basic Lemma'', i.e., Lemma~\ref{lm:basic}.

A \emph{join cover} of an element $p \in \mbf L$ is a finite subset $A \subseteq \mbf L$ such that $p \leq \Join A$.
The join cover is \emph{nontrivial} if $p \nleq a$ for all $a \in A$.
An element $p$ in a lattice $\bf L$ is \emph{join prime} if it has no nontrivial join cover, i.e., $p \leq \Join A$ implies $p \leq a$ for some $a \in A$.  
For finite subsets $A$, $B \subseteq \mbf L$, we say that $A$ \emph{refines} $B$, written $A \ll B$, if for every $a \in A$ there exists $b \in B$ such that $a \leq b$.  
The join cover $p \leq \Join A$ is \emph{minimal} if whenever $p \leq \Join B$ and $B \ll A$, then $A \subseteq B$.
The dual notions, including \emph{meet cover} and \emph{meet prime}, are defined analogously.

 Let $\mbf K$ and $\mbf L$ be lattices. 
 A homomorphism $f: \mbf K \rightarrow \mbf L$ is said to be \emph{lower bounded} if for every $a \in \mbf L$, the set $\{ u \in \mbf K: f(u) \geq a \}$ is either empty or has a least element. 
 A finitely generated lattice $\mbf L$ is called \emph{lower bounded} if every homomorphism $f: \mbf K \rightarrow \mbf L$, where $\mbf K$ is finitely generated, is lower bounded. 
 Let $D_0(\mbf L)$ denote the set of join prime elements of\/ $\mbf L$. 
For $k > 0$, let $a \in D_k (\mbf L)$ if every nontrivial join-cover $V$ of $a$ has a refinement $U \subseteq D_{k-1}(\mbf L)$ which is also a join-cover of $a$. 
Observe that, from the definition, $D_0(\mbf L) \subseteq D_1(\mbf L) \subseteq D_2(\mbf L) \subseteq \cdots$.
Let $D(\mbf L) = \bigcup_{k < \omega} D_{k}(\mbf L)$.

The characterization of (finitely generated) lower bounded lattices is due to Kostinsky \cite{Kostinsky}, building on J\'onsson \cite{JN77} and McKenzie \cite{McK1972}.
See \cite[Theorem~2.13]{FJN}.

\begin{thm} \label{thm:kos}
For a finitely generated lattice $\mbf L$, the following are equivalent.
\begin{enumerate}[(1)]
    \item There exists a finite set $X$ and a lower bounded epimorphism $f : \mbf F(X) \onto \mbf L$.
    \item For every finitely generated lattice $\mbf K$, every homomorphism $h : \mbf K \to \mbf L$ is lower bounded.
    \item $D(\mbf L)=\mbf L$.
\end{enumerate}
\end{thm}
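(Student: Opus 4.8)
The implication $(2) \Rightarrow (1)$ is immediate: since $\mbf L$ is finitely generated there is a finite $X$ and an epimorphism $f : \mbf F(X) \onto \mbf L$, and $(2)$ asserts in particular that this $f$, being a homomorphism out of the finitely generated lattice $\mbf F(X)$, is lower bounded. The substance of the theorem is therefore the equivalence of the \emph{intrinsic} condition $(3)$ with the existence of a single lower bounded free cover $(1)$, together with the upgrade of $(1)$ to the uniform statement $(2)$. I would organize the whole argument around the \emph{lower adjoint} of a lower bounded epimorphism, and isolate as the core the following claim: for an epimorphism $f : \mbf F(X) \onto \mbf L$ with $X$ finite, $f$ is lower bounded if and only if $D(\mbf L) = \mbf L$. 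This follows the classical route of Kostinsky, J\'onsson and McKenzie (cf.~\cite[Theorem~2.13]{FJN}).

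For the forward direction of the core claim, suppose $f$ is lower bounded, and for $a \in \mbf L$ let $\beta(a)$ be the least element of $\{ u : f(u) \geq a \}$, which is nonempty since $f$ is onto. Then $f\beta = \mathrm{id}_{\mbf L}$, the adjunction $\beta(a) \leq u \iff a \leq f(u)$ holds, and $\beta$ preserves joins. I would then induct on the complexity of the canonical (Whitman) form of $\beta(a)$ in $\mbf F(X)$ to show $a \in D(\mbf L)$: a nontrivial join cover $a \leq \Join c_j$ in $\mbf L$ pulls back through the adjunction, and minimality of join covers forces the covering elements to have strictly simpler $\beta$-forms, with the base case being that a join prime element lies in $D_0(\mbf L)$. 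This places each $a$ in some $D_k(\mbf L)$, so $D(\mbf L) = \mbf L$.

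The converse is where I expect the main obstacle to lie. Assuming $D(\mbf L)=\mbf L$, I must construct the adjoint $\beta$ and verify it witnesses lower boundedness. I would define $\beta$ on join irreducible elements by recursion on the $D$-rank $\min\{k : p \in D_k(\mbf L)\}$, roughly by
\[ \beta(p) = \Big(\Meet_{x \in X,\, f(x) \geq p} x\Big) \meet \Meet_{C} \Join_{c \in C} \beta(c), \]
where $C$ ranges over the minimal nontrivial join covers of $p$; since $D(\mbf L)=\mbf L$, each such $C$ refines into $D_{k-1}(\mbf L)$, so the terms $\beta(c)$ are already defined and the expression is a genuine element of $\mbf F(X)$. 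One then extends $\beta$ by $\beta(a) = \Join \{ \beta(p) : p \leq a,\ p \text{ join irreducible} \}$ and verifies the adjunction $\beta(a) \leq u \iff a \leq f(u)$, i.e.\ that $\beta(a)$ really is the least preimage. This verification is the delicate part: it rests on Whitman's condition $(\op W)$ in $\mbf F(X)$ to analyze exactly when $f(u) \geq p$, on the finiteness (up to refinement) of the set of minimal nontrivial join covers in a finitely generated lattice, and on a simultaneous induction on $D$-rank matching the meet structure of $\beta(p)$ with the join covers of $p$.

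Finally I would close the loop $(3) \Rightarrow (2)$. Since $D(\mbf L)=\mbf L$ is manifestly intrinsic to $\mbf L$, the core claim makes \emph{every} epimorphism from a finitely generated free lattice onto $\mbf L$ lower bounded. To reach the uniform statement, take any finitely generated $\mbf K$ and $h : \mbf K \to \mbf L$, choose an epimorphism $g : \mbf F(Y) \onto \mbf K$, and note it suffices to prove $hg$ lower bounded, since lower boundedness descends along the surjection $g$. Forming $\mbf F(X \cup Y)$ and the epimorphism $F : \mbf F(X \cup Y) \onto \mbf L$ agreeing with $f$ on $X$ and with $hg$ on $Y$, the map $F$ is lower bounded by the core claim, while $hg = F \iota$ for the inclusion $\iota : \mbf F(Y) \hookrightarrow \mbf F(X \cup Y)$ of free generators, which is itself lower bounded (being split by a retraction, a standard fact for free lattices). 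As a composite of lower bounded maps, $hg$ is lower bounded, establishing $(2)$ and completing the cycle $(2) \Rightarrow (1) \Rightarrow (3) \Rightarrow (2)$.
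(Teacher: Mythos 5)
The paper itself does not prove this statement; it quotes it from Freese--J\'onsson--Nation \cite[Theorem~2.13]{FJN}, so the comparison is with the classical Kostinsky--J\'onsson--McKenzie argument that you are (correctly) trying to reconstruct. Your cycle $(2)\Rightarrow(1)$, the forward direction of the core claim (the adjoint $\beta$ with $f\beta=\mathrm{id}$, join-preservation of $\beta$, induction on the Whitman complexity of $\beta(a)$, join primeness of elements with meet-of-generators $\beta$-form), and the descent of lower boundedness along the surjection $g$ are all sound and match the standard route. The genuine gap is in the converse of your core claim: your recursion $\beta(p) = \bigl(\Meet_{f(x)\geq p} x\bigr) \meet \Meet_C \Join_{c\in C}\beta(c)$ presupposes the \emph{minimal join cover refinement property} --- that every nontrivial join cover of $p$ refines to a minimal one and that $p$ has only finitely many minimal nontrivial join covers. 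Under the bare hypothesis $D(\mbf L)=\mbf L$ this is not yet available: in \cite{FJN} it is Lemma~2.19, proved for finitely generated \emph{lower bounded} lattices as a consequence of precisely the lower boundedness you are in the middle of constructing (the canonical joinands of $\beta(p)$ in $\mbf F(X)$ are what furnish the finitely many minimal covers). Note that the definition of $D_k$ deliberately quantifies over all nontrivial join covers and their refinements, not minimal ones, to avoid this circularity; an arbitrary finitely generated lattice can fail MJCRP, and without it your meet over $C$ ranges over a possibly infinite family, so $\beta(p)$ is not a well-defined element of $\mbf F(X)$. The standard repair replaces your recursion by the finite levels $S_0 \subseteq S_1 \subseteq \cdots$ (alternating meet- and join-closures of the finite generating set, each finite) and the operators $\beta_k(a) = \Meet\{ w \in S_k : h(w) \geq a\}$, showing by induction on $D$-rank that the decreasing sequence $\beta_k(a)$ stabilizes. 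This also proves $(3)\Rightarrow(2)$ for an arbitrary $h : \mbf K \to \mbf L$ directly, making your factorization through $\mbf F(X \cup Y)$ unnecessary.

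A second, smaller defect lies in that factorization step: the justification that the generator inclusion $\iota : \mbf F(Y) \hookrightarrow \mbf F(X \cup Y)$ is lower bounded ``being split by a retraction'' is not a valid principle. A splitting $\rho\iota = \mathrm{id}$ only shows that $\rho(a)$ is a \emph{lower bound} of $\{ u \in \mbf F(Y) : \iota(u) \geq a\}$; it need not belong to that set (that would require $\iota\rho \geq \mathrm{id}$), and a meet-closed up-set in a free lattice can in principle lack a least element, so splitness alone proves nothing. The fact you need is true, but it requires its own argument --- a Whitman-form induction computing the least element of $\{u \in \mbf F(Y) : u \geq w\}$ for $w \in \mbf F(X\cup Y)$, or citing that generator inclusions between free lattices are bounded --- and deriving it instead from $D(\mbf F(X\cup Y)) = \mbf F(X \cup Y)$ together with $(3)\Rightarrow(2)$ would be circular. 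With the $\beta_k$-level machinery in place of your minimal-cover recursion, both problems disappear and the proposal becomes the standard proof.
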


The notions of upper bounded homomorphism, upper bounded lattice, etc.~are defined dually.  A lattice is \emph{bounded} if it is both lower and upper bounded.  Day characterized finite bounded lattices in terms if doubling intervals \cite{RAD77}.

\begin{thm} \label{thm:RAD1}
A finite lattice is bounded if and only if it can be obtained from a 1-element lattice by a sequence of doubling intervals.
\end{thm}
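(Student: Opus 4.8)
The plan is to derive both implications from a single \emph{Key Lemma}: for every finite lattice $\mbf K$ and interval $I \subseteq \mbf K$, the doubling $\mbf K[I]$ is bounded if and only if $\mbf K$ is bounded. Granting this biconditional, the direction ``obtainable by doublings $\Rightarrow$ bounded'' is immediate by induction on the number of doublings, the one-element lattice being vacuously bounded and each step preserving boundedness. The converse goes by induction on $|\mbf L|$: for a finite bounded lattice $\mbf L$ with $|\mbf L| \geq 2$ I will produce a congruence $\theta$ realizing $\mbf L$ as a doubling $\mbf L \cong (\mbf L/\theta)[I]$; the Key Lemma then gives that $\mbf L/\theta$ is again bounded, and since it is strictly smaller the induction closes after one further doubling.

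To prove the Key Lemma I would use the criterion $D(\mbf L) = \mbf L$ of Theorem~\ref{thm:kos}(3) for lower boundedness together with its order dual for upper boundedness, with the epimorphism $\lambda : \mbf K[I] \onto \mbf K$ of Theorem~\ref{doubling} as the transport mechanism. Applying $\lambda$ to a nontrivial join cover in $\mbf K[I]$ yields a join cover in $\mbf K$, while a refinement supplied by $D(\mbf K) = \mbf K$ lifts back along $\lambda$ because its fibres are chains of length at most two. Running the recursion defining the ranks $D_k$ simultaneously on the two lattices, and treating by hand the degenerate case in which a cover becomes trivial only after projection (which can involve nothing but the two copies of a single element of $I$), one checks that $D(\mbf K) = \mbf K$ forces $D(\mbf K[I]) = \mbf K[I]$ and conversely; the dual argument disposes of upper boundedness. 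Hence $\mbf K[I]$ is bounded exactly when $\mbf K$ is.

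For the converse the real work is the construction of the un-doubling congruence. Given a finite bounded $\mbf L$ with $|\mbf L| \geq 2$, I would fix a prime quotient $p_* \prec p$ and pass to the congruence $\theta = \op{con}(p_*, p)$ it generates, chosen join-irreducible in the finite distributive lattice $\op{Con}\mbf L$ (for instance an atom of $\op{Con}\mbf L$). The claim to be proved is that $\theta$ is a \emph{doubling congruence}: every nontrivial $\theta$-class is a two-element covering pair, the images of these classes form an interval $I$ of $\mbf L/\theta$, and the map sending each class to its two copies and each singleton to itself is an isomorphism $\mbf L \cong (\mbf L/\theta)[I]$. This is where boundedness is essential: the absence of cyclic dependencies encoded by $D(\mbf L) = \mbf L$ and its dual is what forbids the ``mixing'' of upper and lower copies that would destroy the splitting, exactly the phenomenon behind the disjointness $GM \cap LJ = \varnothing$ exploited in Lemma~\ref{stabledoub}.

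The main obstacle is squarely this last step — proving that a suitably chosen join-irreducible congruence of a finite bounded lattice genuinely exhibits the lattice as one doubling. Verifying the structural ``no mixing'' condition, namely that the two-element classes assemble into a single interval and that $\mbf L$ reconstructs as $(\mbf L/\theta)[I]$, is the heart of Day's theorem and the one place where the bounded hypothesis (through the cycle-freeness of Theorem~\ref{thm:kos}) must be used in an essential way rather than mere finiteness.
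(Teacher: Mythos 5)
Your overall architecture is the right one (it is essentially Day's): the easy direction by induction from the one-element lattice, and your Key Lemma --- $\mbf K[I]$ is bounded if and only if $\mbf K$ is --- is precisely the first assertion of the paper's Theorem~\ref{covers_are_proj}, which the paper proves by the same heuristic you sketch (minimal join covers of pre-existing join irreducibles are unchanged by doubling, the unique new join irreducible only adds dependencies $p_1 \,\op D\, x$, so $\op D$-ranks transfer with increment at most one, and $\op D$-cycles persist). Note that for Theorem~\ref{thm:RAD1} itself the paper offers no proof but cites Day \cite{RAD77}, so your attempt must stand on its own.

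It does not, because the un-doubling step --- which you correctly identify as the heart of the matter --- is asserted rather than proved, and as stated it is miscalibrated. First, the congruence generated by \emph{any} prime quotient is automatically join-irreducible in $\op{Con}\,\mbf L$, so your hedge ``chosen join-irreducible (for instance an atom)'' conceals a real error: join-irreducibility is not sufficient. In $\mbf N_5$ (with $0 \prec a \prec b \prec 1$ and complement $c \prec 1$), $\op{Con}\,\mbf N_5$ is a four-element chain and $\theta = \op{con}(c,1)$ is join-irreducible and generated by a prime quotient, yet it has the three-element class $\{0,a,b\}$, so $\mbf N_5$ is not $(\mbf N_5/\theta)[I]$ for any interval; only an atom can work. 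Second, even with $\theta$ an atom, the three facts you need --- every $\theta$-class has at most two elements, the doubled classes form an interval of $\mbf L/\theta$, and no ``mixing'' (the upper element of one class never lies below the lower element of another) --- constitute the entire content of the hard direction, and your sole justification is an analogy with $GM \cap LJ = \varnothing$ from Lemma~\ref{stabledoub}. That analogy does not transfer: Lemma~\ref{stabledoub} lifts a PDL homomorphism of a \emph{standardized} presentation through one doubling, and its disjointness argument uses Whitman's condition $(\op W)$ for $\mbf S$, whereas finite bounded lattices need not satisfy $(\op W)$ at all. What is actually required is Day's analysis of how prime quotients force one another in a bounded lattice (in modern terms, congruence uniformity: $p \mapsto \op{con}(p_*,p)$ is a bijection from join irreducibles of $\mbf L$ onto join irreducibles of $\op{Con}\,\mbf L$), showing that an atom of $\op{Con}\,\mbf L$ cannot identify two stacked projective prime quotients and that its classes assemble into an interval; this is exactly where the absence of $\op D$-cycles from Theorem~\ref{thm:kos} must be deployed, and your proposal leaves that argument entirely unsupplied.
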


Since every finite distributive lattice can be obtained from a 1-element lattice by a sequence of doublings, we could also start with a finite distributive lattice.
There are analogues for finite lower and upper bounded lattices in terms of doubling other types of convex sets \cite{RAD79}.

A lattice $\mbf L$ is said to be \emph{projective} if for any epimorphism $f : \mbf M \twoheadrightarrow \mbf N$ and any homomorphism $h : \mbf L \to \mbf N$ there exists a homomorphism $g : \mbf L \to \mbf M$ such that $h=fg$.  

A lattice $\mbf L$ is a \emph{retract} of $\mbf K$ if there exist homomorphisms $f : \mbf K \onto \mbf L$ and $g : \mbf L \to \mbf K$ with $fg=\mathrm{id}_{\mbf L}$.  Thus $g(\mbf K)$ is a transversal of $f$ and $g$ an embedding.

\begin{thm}[Theorem 5.1 in \cite{FJN}] \label{thm:retract} 
The following are equivalent for a lattice $\mbf L$.
\begin{enumerate}[(1)]
\item $\mbf L$ is a projective lattice.
\item $\mbf L$ is a retract of a free lattice.
\item For any $\mbf K$, any surjective $f: \mbf K \onto \mbf L$  has a retraction.
\end{enumerate}
\end{thm}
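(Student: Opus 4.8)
The plan is to establish the cycle of implications $(1) \Rightarrow (3) \Rightarrow (2) \Rightarrow (1)$, which is the standard pattern for characterizing projective objects in any variety of algebras; the only lattice-specific input is the existence of free lattices together with their universal mapping property. First I would handle $(1) \Rightarrow (3)$: suppose $\mbf L$ is projective and let $f : \mbf K \onto \mbf L$ be an arbitrary epimorphism. Applying the definition of projectivity to $f$ itself, together with the homomorphism $h = \mathrm{id}_{\mbf L} : \mbf L \to \mbf L$, yields a homomorphism $g : \mbf L \to \mbf K$ with $fg = \mathrm{id}_{\mbf L}$, which is precisely a retraction of $f$.

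For $(3) \Rightarrow (2)$ I would use that every lattice is a homomorphic image of a free lattice: taking $X$ to be the underlying set of $\mbf L$, the identity map on $X$ extends to an epimorphism $f : \mbf F(X) \onto \mbf L$. By hypothesis $(3)$, this epimorphism has a retraction $g : \mbf L \to \mbf F(X)$ with $fg = \mathrm{id}_{\mbf L}$, so $\mbf L$ is a retract of the free lattice $\mbf F(X)$, as required.

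For $(2) \Rightarrow (1)$ the key preliminary step is to observe that free lattices are themselves projective. This follows directly from the universal mapping property: given an epimorphism $p : \mbf M \onto \mbf N$ and a homomorphism $h : \mbf F(X) \to \mbf N$, choose for each generator $x \in X$ a preimage $m_x \in \mbf M$ with $p(m_x) = h(x)$ (possible since $p$ is onto), and let $k : \mbf F(X) \to \mbf M$ be the unique homomorphism sending each $x$ to $m_x$; then $pk$ and $h$ agree on the generators, hence $pk = h$. Now suppose $\mbf L$ is a retract of a free lattice, with $f : \mbf F(X) \onto \mbf L$ and $g : \mbf L \to \mbf F(X)$ satisfying $fg = \mathrm{id}_{\mbf L}$. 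Given any epimorphism $p : \mbf M \onto \mbf N$ and homomorphism $h : \mbf L \to \mbf N$, I would apply projectivity of $\mbf F(X)$ to the composite $hf : \mbf F(X) \to \mbf N$ to obtain $k : \mbf F(X) \to \mbf M$ with $pk = hf$; then $g' := kg : \mbf L \to \mbf M$ satisfies $pg' = pkg = hfg = h$, establishing that $\mbf L$ is projective.

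I do not expect any of these steps to present a genuine obstacle, since the argument is entirely formal and categorical; the only point requiring a moment's care is the verification that free lattices are projective, but this is immediate from the universal property and the surjectivity of $p$.
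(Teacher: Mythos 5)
Your proof is correct: each of the three implications is carried out properly (projectivity applied to $h=\mathrm{id}_{\mbf L}$ for $(1)\Rightarrow(3)$, the canonical epimorphism $\mbf F(X)\onto\mbf L$ for $(3)\Rightarrow(2)$, and projectivity of free lattices plus closure of projectives under retracts for $(2)\Rightarrow(1)$), and it is consistent with the paper's convention that epimorphisms are surjective. The paper gives no proof of its own, citing Theorem~5.1 of \cite{FJN}, and the argument there is essentially the same standard formal one you present.
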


Combining this with Corollaries 5.9 and 5.10 in \cite{FJN}, due to Kostinsky \cite{Kostinsky}, we get:
\begin{thm} \label{thm:proj}
The following are equivalent for a finitely generated lattice.
\begin{enumerate}[(1)]
\item $\mbf L$ is a projective lattice.
\item $\mbf L$ is a sublattice of a free lattice.
\item $\mbf L$ satisfies $\op D(\mbf L) = \mbf L = \op D^d(\mbf L)$ and $(\mathrm W)$.
\item $\mbf L$ is bounded and satisfies $(\mathrm W)$.
\end{enumerate}    
\end{thm}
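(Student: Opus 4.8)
The plan is to prove the four conditions equivalent via the cycle $(1) \Rightarrow (2) \Rightarrow (3) \Rightarrow (1)$, together with the separate equivalence $(3) \Leftrightarrow (4)$, leaning on Theorem~\ref{thm:retract} (projectivity $=$ being a retract of a free lattice) and Theorem~\ref{thm:kos} (Kostinsky's characterization of lower boundedness). I would begin with the easiest piece, $(3) \Leftrightarrow (4)$, which is essentially a restatement: by Theorem~\ref{thm:kos} a finitely generated lattice satisfies $D(\mbf L) = \mbf L$ if and only if it is lower bounded, and dually $D^d(\mbf L) = \mbf L$ if and only if it is upper bounded. Since \emph{bounded} means both lower and upper bounded, the conjunction ``$D(\mbf L) = \mbf L = D^d(\mbf L)$ and $(\mathrm W)$'' of (3) is literally ``bounded and $(\mathrm W)$'' of (4).

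For $(1) \Rightarrow (2)$, I would apply Theorem~\ref{thm:retract}: a projective $\mbf L$ is a retract of some free lattice $\mbf F$, say via $f : \mbf F \onto \mbf L$ and $g : \mbf L \to \mbf F$ with $fg = \mathrm{id}$; then $g$ is an embedding and $\mbf L \cong g(\mbf L)$ is a sublattice of $\mbf F$. For $(2) \Rightarrow (3)$, suppose $\mbf L$ is a finitely generated sublattice of a free lattice $\mbf F$. Since meets, joins, and order in a sublattice agree with those computed in $\mbf F$, Whitman's condition $(\mathrm W)$, a universally quantified implication about exactly these data, is inherited from $\mbf F$, so $\mbf L$ satisfies $(\mathrm W)$. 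It remains to show $D(\mbf L) = \mbf L = D^d(\mbf L)$. Here one exploits that $\mbf F$ itself is bounded, with every element of finite $D$-rank; using the canonical-form structure of free lattice elements one transfers this rank information down to the sublattice to conclude $D(\mbf L) = \mbf L$, and dually $D^d(\mbf L) = \mbf L$. (Note that join-primeness in $\mbf F$ of an element of $\mbf L$ already yields join-primeness in $\mbf L$, but lifting through the $D_k$-hierarchy is more delicate, since a refining join-cover computed in $\mbf F$ need not lie in $\mbf L$; this is the technical heart of the implication and is the content of the cited corollaries of~\cite{FJN}.)

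The crux is $(3) \Rightarrow (1)$, equivalently $(4) \Rightarrow (1)$. I would fix a finite generating set $X$ and the canonical epimorphism $f : \mbf F(X) \onto \mbf L$. Since $\mbf L$ is bounded, $f$ is a bounded homomorphism, so it possesses a lower adjoint $\beta$ and an upper adjoint $\alpha$ from $\mbf L$ to $\mbf F(X)$, characterised by $\beta(a) \leq w \Leftrightarrow a \leq f(w)$ and $w \leq \alpha(a) \Leftrightarrow f(w) \leq a$; one checks that $f\beta = f\alpha = \mathrm{id}$, that $\beta$ preserves joins, and that $\alpha$ preserves meets. The goal is a genuine lattice homomorphism $g : \mbf L \to \mbf F(X)$ with $fg = \mathrm{id}$, for then $\mbf L$ is a retract of the free lattice $\mbf F(X)$ and hence projective by Theorem~\ref{thm:retract}. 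This is precisely the step at which $(\mathrm W)$ is indispensable: it is what forces the order-theoretic adjoint to respect \emph{both} operations simultaneously (so that, for instance, the join-preserving map $\beta$ is shown to preserve meets as well), upgrading it to the required homomorphic section.

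I expect this last upgrade, turning an adjoint into a two-sided operation-preserving map by means of $(\mathrm W)$, to be the main obstacle; it is the genuinely lattice-theoretic content of Kostinsky's theorem (\cite{FJN}, Corollaries~5.9 and~5.10), while everything else is bookkeeping or inheritance. The secondary technical point is the downward transfer of the $D$-rank in $(2) \Rightarrow (3)$, which likewise rests on the special structure of free lattices rather than on formal nonsense.
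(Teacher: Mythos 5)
Your architecture --- the cycle $(1)\Rightarrow(2)\Rightarrow(3)\Rightarrow(1)$ together with $(3)\Leftrightarrow(4)$ via Theorem~\ref{thm:kos} and its dual --- is exactly the standard route, and for calibration note that the paper gives no proof of this theorem at all: it quotes the result as Theorem~\ref{thm:retract} combined with Corollaries~5.9 and~5.10 of \cite{FJN} (due to Kostinsky), remarking only that $(3)\Leftrightarrow(4)$ uses Theorem~\ref{thm:kos} and its dual. Your treatment of $(3)\Leftrightarrow(4)$, of $(1)\Rightarrow(2)$, and of the inheritance of $(\mathrm W)$ by sublattices (a universal sentence) is correct, and you rightly identify that $\op D(\mbf L)=\mbf L$ for a finitely generated sublattice of a free lattice is a genuine theorem rather than formal inheritance, deferring it to \cite{FJN} just as the paper does.

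There is, however, one concrete error in the mechanism you propose for the crux $(4)\Rightarrow(1)$: Whitman's condition does \emph{not} force the lower adjoint $\beta$ to preserve meets (nor, dually, $\alpha$ to preserve joins). Counterexample: let $\mbf L = \mbf 2 \times \mbf 2 = \{0,a,b,1\}$, which is finite distributive (hence bounded), satisfies $(\mathrm W)$, and is projective (indeed $\mbf 2 \times \mbf 2 \cong \mbf F_2$). Take $f : \mbf F(x,y,z) \onto \mbf L$ with $x \mapsto a$, $y \mapsto b$, $z \mapsto 0$; this is a bounded epimorphism. Since $a$, $b$, and $0$ are join prime in $\mbf L$, one computes $\beta(a) = x$, $\beta(b) = y$, but $\beta(a \meet b) = \beta(0) = x \meet y \meet z < x \meet y = \beta(a)\meet\beta(b)$. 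A homomorphic section does exist --- namely $g(a)=x$, $g(b)=y$, $g(0)=x\meet y$, $g(1)=x\join y$ --- but $g(0)$ lies strictly above $\beta(0)$: in general the section must be chosen coherently \emph{inside} the intervals $[\beta(a),\alpha(a)]$, and $(\mathrm W)$ is what makes such an interpolation possible; it does not upgrade either adjoint itself into a homomorphism. Since you explicitly defer this step to Kostinsky's theorem, your proposal stands at the citation level (matching the paper), but as a proof sketch the stated plan ``show $\beta$ preserves meets as well'' would fail and must be replaced by the interpolation construction of \cite{FJN}, Corollaries~5.9--5.10.
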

The equivalence of (3) and (4) uses Theorem~\ref{thm:kos} and its dual.
The non-finitely generated case is in the rest of Chapter~5 of \cite{FJN} and another paper \cite{FN1978}. As a consequence, we have the following result, i.e., Theorem~\ref{covers_are_proj}. Before stating it we need a couple of definitions which will be used in the proof of \ref{covers_are_proj}. Recall that an element $a$ in a lattice $\mbf L$ is said to be join irreducible if $a = b \vee c$ implies that either $a = b$ or $a = c$. Notice also that being join irreducible is a first-order property. We denote by $\text{J}(\mathbf{K})$ the set of join irreducible elements in $\mathbf{K}$.

 \begin{defn}\label{def_join_dep_rel} Let $\mbf{L}$ be a lattice and $u, v \in \mathbf{L}$. We define $u D v$ if $u \neq v$, $v$ is join irreducible, and there exists $q$ such that $u \leq v \vee q$ but $u \not\leq s \vee q$ for every $s < v$.
\end{defn} 

\begin{thm}\label{covers_are_proj} A finite lattice $\mbf L$ is bounded if and only if $\mbf L[I]$ is bounded, for every interval $I$ of $\mbf L$. In particular, a finite lattice $\mbf L$ is bounded if and only if its $\op W$-cover is a projective lattice.  
\end{thm}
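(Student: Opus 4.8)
The plan is to prove the interval statement first and then derive the ``in particular'' clause by iterating it along the doubling tower that builds the $\mathrm{W}$-cover.

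For the interval equivalence, fix an interval $I$ of the finite lattice $\mbf L$. The forward implication is immediate from Theorem~\ref{thm:RAD1}: if $\mbf L$ is bounded it is obtained from a one-element lattice by a sequence of interval doublings, and appending one more doubling (of $I$) exhibits $\mbf L[I]$ as such a sequence, hence $\mbf L[I]$ is bounded. For the reverse implication I would show that $\mbf L$ is a \emph{retract} of $\mbf L[I]$: the epimorphism $\lambda : \mbf L[I] \onto \mbf L$ of Theorem~\ref{doubling} splits via the section $g$ sending $x \mapsto x$ for $x \notin I$ and $x \mapsto (x,0)$ for $x \in I$, and a routine check against the order on $\mbf L[I]$ shows $g$ is a lattice embedding with $\lambda g = \mathrm{id}$. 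Boundedness then descends to retracts: given a homomorphism $h : \mbf K \to \mbf L$ with $\mbf K$ finitely generated, the composite $gh : \mbf K \to \mbf L[I]$ is lower bounded because $\mbf L[I]$ is (Theorem~\ref{thm:kos}(2)); since $gh$ takes values in the retract $g(\mbf L)$, on which $\lambda$ restricts to an order-isomorphism onto $\mbf L$, one has $\{u : h(u) \geq b\} = \{u : gh(u) \geq g(b)\}$ for each $b \in \mbf L$, so this set is empty or has a least element. Thus $h$ is lower bounded; dually it is upper bounded, and $\mbf L$ is bounded by Theorem~\ref{thm:kos}.

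For the ``in particular'' clause, recall that the $\mathrm{W}$-cover $\widehat{\mbf L}$ is generated by (the image of) the finite lattice $\mbf L$, hence finitely generated, and satisfies $(\mathrm W)$ by construction; so by Theorem~\ref{thm:proj}(4) it is projective if and only if it is bounded, and it suffices to prove $\mbf L$ bounded $\iff \widehat{\mbf L}$ bounded. By the construction of the $\mathrm{W}$-cover (as in Theorem~\ref{thm:daycons}, fixing $(\mathrm W)$-failures), $\widehat{\mbf L}$ is the sublattice generated by $\mbf L$ of the inverse limit of a tower $\mbf L = \mbf P_0 \leftarrow \mbf P_1 \leftarrow \cdots$ in which each $\mbf P_{k+1}$ arises from $\mbf P_k$ by doubling intervals. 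Applying the interval equivalence at each single doubling, every $\mbf P_k$ is finite and shares the boundedness status of $\mbf L$. It remains to transfer this status across the inverse limit, i.e.\ to show $\widehat{\mbf L}$ is bounded exactly when the $\mbf P_k$ are.

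I expect this last step to be the main obstacle, precisely because $\mbf L$ is in general \emph{not} a retract of $\widehat{\mbf L}$: the free distributive lattice on three generators is bounded and has $\mathrm{W}$-cover $\mbf F_3$, yet it fails $(\mathrm W)$ and so is not projective, hence not a sublattice, and not a retract, of $\mbf F_3$. Thus the clean retract argument of the finite step is unavailable at the limit. Instead I would use the characterization $\op D(\cdot)=(\cdot)$ of lower boundedness from Theorem~\ref{thm:kos}: non-boundedness is witnessed by a \emph{finite} configuration, namely a cycle for the join-dependency relation $D$ (cf.~\ref{def_join_dep_rel}) among join irreducibles. The technical core is then to use the known analysis of how $\op J(\cdot)$ and the relation $D$ behave under the doubling maps $\lambda : \mbf P_{k+1} \to \mbf P_k$ to show that such a cycle in $\widehat{\mbf L}$ projects to one in some $\mbf P_k$ and, conversely, lifts from $\mbf P_k$ back to $\widehat{\mbf L}$. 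Since any $D$-cycle is finite it already lives at a finite stage, which is exactly what makes the transfer across the limit go through; assembling this correspondence and checking it respects passage to the generated sublattice is the delicate part of the argument.
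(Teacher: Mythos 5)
Your forward implication for the finite step (append one more doubling to the sequence given by Theorem~\ref{thm:RAD1}) is fine, but the reverse implication rests on a false claim: $\mbf L$ is in general \emph{not} a retract of $\mbf L[I]$, and your proposed section $g$ (fixing $\mbf L - I$ and sending $x \in I$ to $(x,0)$) is not a lattice homomorphism. The problem is meets that fall into $I$ from outside: if $x, y \in \mbf L - I$ and $x \meet y = m \in I$, then in $\mbf L[I]$ both $(m,0)$ and $(m,1)$ lie below $x$ and $y$, so $x \meet y = (m,1)$ there, while $g(m)=(m,0)$; dually the section through $(x,1)$ breaks joins. In fact no section of $\lambda$ need exist: take $\mbf L$ to be two copies of $\mbf 2 \times \mbf 2$ glued vertically at a common middle element $m$ (so $0 \prec x', y' \prec m \prec x, y \prec 1$ with $x' \join y' = m = x \meet y$) and $I = [m,m]$. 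In $\mbf L[I]$ one computes $x \meet y = (m,1)$ and $x' \join y' = (m,0)$, so $g(m)=(m,0)$ breaks the meet and $g(m)=(m,1)$ breaks the join; yet both $\mbf L$ and $\mbf L[I]$ are bounded here, so the obstruction is not pathological. The easy repair is to use that $\mbf L = \lambda(\mbf L[I])$ is a homomorphic image and that finite bounded lattices form a pseudovariety (closed under homomorphic images, as recorded in Section~\ref{subsec:part2}); the paper itself argues instead via join dependency: doubling leaves the minimal nontrivial join covers of the existing join irreducibles unchanged, so a $\op D$-cycle in $\mbf L$ (which a finite non-bounded lattice must contain, up to duality) survives in $\mbf L[I]$.

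For the ``in particular'' clause, your reduction via Theorem~\ref{thm:proj} is right, your remark that $\mbf{FD}_3$ is not a retract of its W-cover $\mbf F_3$ is correct, and your backward transfer---a $\op D$-cycle persists up the tower and into the limit---is exactly the paper's argument that a non-bounded $\mbf L$ has a non-projective W-cover. But your plan for the other direction does not close: for the (typically infinite) finitely generated lattice $\widehat{\mbf L}$, boundedness means $\op D(\widehat{\mbf L}) = \widehat{\mbf L} = \op D^d(\widehat{\mbf L})$ by Theorem~\ref{thm:kos}, and absence of finite $\op D$-cycles is strictly weaker than that: an infinite lattice can be cycle-free while containing join irreducibles of infinite $\op D$-rank, witnessed by an infinite properly descending $\op D$-sequence through pairwise distinct join irreducibles. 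So ``no cycle in any $\mbf P_k$, hence none in $\widehat{\mbf L}$'' does not yield $\op D(\widehat{\mbf L}) = \widehat{\mbf L}$. What is needed---and what the paper supplies, citing Day \cite{RAD92} for the full proof---is the quantitative rank bookkeeping: the new join irreducible $p_1$ created by a doubling satisfies only $p_1 \,\op D\, x$, never $x \,\op D\, p_1$, so $\op J(\mbf L) \subseteq \op D_k(\mbf L)$ forces $\op J(\mbf L[I]) \subseteq \op D_{k+1}(\mbf L[I])$, and every element of $\widehat{\mbf L}$, being present at a finite stage, lands in some $\op D_k(\widehat{\mbf L})$. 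The step you flagged as ``the delicate part'' is therefore not a deferrable technicality; it is the substance of Day's theorem, and a cycle criterion alone cannot replace it.
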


\begin{proof}
One direction, that the W-cover of a finite bounded lattice is projective, is a theorem in Day~\cite{RAD92} (middle of page 262).
The heuristic is as follows.
When you double an interval, it creates a new join irreducible element, say $p_1$.  But the minimal join covers of existing join irreducibles are unchanged.  
If $x \in \op{J}(\mbf L)$ is in $\op D_k(\mbf L)$, 
then $x$ (or if doubled $x_0$) is in $\op D_k(\mbf L[I])$.
Thus there are no new $\op D$-relations $x \,\op D\, p_1$, only the other way around, $p_1 \,\op D\, x$.  
If $\op J(\mbf L) \subseteq \op D_k(\mbf L)$,
then $\op J(\mbf L[I]) \subseteq \op D_{k+1}(\mbf L[I])$.
This is preserved when you take the limit.

\smallskip \noindent One the other hand, if a finite lattice $\mbf L$ is not bounded, then it contains a $\op D$-cycle.  By the same reasoning as above, since minimal join covers of existing join irreducibles are unchanged, the same cycle is in $\mbf L[I]$ and in the limit.
\end{proof}

\subsection{The basic lemma}

Recall that a lattice presentation with negations has three components:  the generating set $X = \{x_1, ..., x_m \} = \mbf{x}$, the inclusions $R_+$, and non-inclusions $R_-$.  
If $\mbf L$ is a lattice (or partial lattice) generated by $X$ and containing $\mbf S$, then a homomorphism $h: \mbf L \to \mbf K$ is an $\mbf S$-homomorphism if $h(s) \leq h(t)$ whenever $s \leq t$ is in $R_+$, and $h(u) \nleq h(v)$ whenever $u \nleq v$ is in $R_-$.
We say that \emph{$\mbf S$ occurs in $\mbf K$} if there is an $\mbf S$-homomorphism $h : \mbf S \to \mbf K$. 
In this case $h$ is a partial lattice homomorphism.

Now assume $\mbf L$ is a lattice generated by $X$, $\mbf G$ is a lattice, and $h : \mbf L \onto \mbf G$ is a surjective $\mbf S$-homomorphism.
If there is a retraction $\rho : \mbf G \to \mbf L$ with $h\rho = \mathrm{id}_{\mbf G}$, then $\rho$ is an embedding.
Hence, $r = \rho h$ is an $\mbf S$-homomorphism with $r : \mbf L \to \mbf L$ and $r^2=r$.
Let us call such an $r$ an \emph{$\mbf S$-retraction} of $\mbf L$. Here is our Basic Lemma.

\begin{lm}[The Basic Lemma]\label{lm:basic}
Let $\mbf S$ be a finite partially defined lattice. Then the following are equivalent:
\begin{enumerate}[(1)]
\item  $\mbf S$ occurs in $\mbf F(Y)$ for some $Y$;
\item  there is an $\mbf S$-homomorphism $h: \op{FP}(\mbf S) \to \mbf F(Y)$ for some $Y$;
\item  there is a projective lattice $\mbf G$ and an $\mbf S$-retraction $r : \op{FP}(\mbf S) \onto \mbf G$;
\item $\op{FP}(\mbf S)$ contains an $\mbf S$-retract that is bounded and satisfies $(\mathrm W)$;
\item $\op{FP}(\mbf S)$ contains an $\mbf S$-retract s.t. $\op D(\mbf L) = \mbf L = \op D^d(\mbf L)$ and $(\mathrm W)$.
\end{enumerate}
\end{lm}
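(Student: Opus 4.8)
The plan is to run the equivalences through the short cycle (1)$\Rightarrow$(2)$\Rightarrow$(3)$\Rightarrow$(2)$\Rightarrow$(1), deducing (1)$\Leftrightarrow$(2) from the universal property of $\op{FP}(\mbf S)$, obtaining (2)$\Rightarrow$(3) and (3)$\Rightarrow$(2) from projectivity, and then reading off (3)$\Leftrightarrow$(4)$\Leftrightarrow$(5) directly from the characterization of finitely generated projective lattices in Theorem~\ref{thm:proj}. The only substantive step is the passage from a map into a free lattice to an internal idempotent that \emph{respects the negations} of $R_-$, i.e.\ (2)$\Rightarrow$(3); everything else is formal bookkeeping with $\mbf S$-homomorphisms.

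First I would dispatch (1)$\Leftrightarrow$(2). For (2)$\Rightarrow$(1), precompose an $\mbf S$-homomorphism $h : \op{FP}(\mbf S) \to \mbf F(Y)$ with the canonical $\mbf S$-homomorphism $\varphi : \mbf S \to \op{FP}(\mbf S)$; then $h\varphi : \mbf S \to \mbf F(Y)$ is again an $\mbf S$-homomorphism, so $\mbf S$ occurs in $\mbf F(Y)$. For (1)$\Rightarrow$(2), take an $\mbf S$-homomorphism $\psi : \mbf S \to \mbf F(Y)$ witnessing occurrence; the mapping property of Definition~\ref{def_S_presented} yields a lattice homomorphism $f : \op{FP}(\mbf S) \to \mbf F(Y)$ with $\psi = f\varphi$. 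That $f$ is an $\mbf S$-homomorphism is forced: for each $u \nleq v$ in $R_-$ we have $f\varphi(u) = \psi(u) \nleq \psi(v) = f\varphi(v)$, so $f$ preserves every non-inclusion of $R_-$.

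The heart of the argument is (2)$\Rightarrow$(3). Given an $\mbf S$-homomorphism $h : \op{FP}(\mbf S) \to \mbf F(Y)$, set $\mbf G' = h(\op{FP}(\mbf S))$. Since $\op{FP}(\mbf S)$ is generated by the finite set $\varphi(X)$, its image $\mbf G'$ is a finitely generated sublattice of the free lattice $\mbf F(Y)$, hence projective by Theorem~\ref{thm:proj} ((2)$\Rightarrow$(1)). Viewing $h$ as a surjection $\op{FP}(\mbf S) \onto \mbf G'$ and applying projectivity of $\mbf G'$ through Theorem~\ref{thm:retract}(3), I obtain a retraction $\rho : \mbf G' \to \op{FP}(\mbf S)$ with $h\rho = \mathrm{id}_{\mbf G'}$; as noted after that theorem, $\rho$ is an embedding. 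Then $r = \rho h$ satisfies $r^2 = \rho(h\rho)h = \rho h = r$, and its image $\mbf G = \rho(\mbf G') \cong \mbf G'$ is projective. It remains to verify that $r$ is an $\mbf S$-retraction, and this is exactly where care is needed: for $u \nleq v$ in $R_-$ we have $h\varphi(u) \nleq h\varphi(v)$ in $\mbf G'$ because $h$ is an $\mbf S$-homomorphism, and since the splitting $\rho$ is a lattice embedding it reflects order, giving $r\varphi(u) = \rho h\varphi(u) \nleq \rho h\varphi(v) = r\varphi(v)$. Thus $r$ is an $\mbf S$-retraction onto a projective lattice, which is (3). The converse (3)$\Rightarrow$(2) is easier: a projective $\mbf G$ embeds in some free lattice $\mbf F(Y)$ by Theorem~\ref{thm:proj} ((1)$\Rightarrow$(2)), and composing $r$ (regarded as a map onto $\mbf G$) with such an embedding $\iota : \mbf G \hookrightarrow \mbf F(Y)$ gives an $\mbf S$-homomorphism $\iota r : \op{FP}(\mbf S) \to \mbf F(Y)$, since $\iota$ reflects order and hence preserves the non-inclusions of $R_-$.

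Finally, (3)$\Leftrightarrow$(4)$\Leftrightarrow$(5) is immediate from Theorem~\ref{thm:proj}: any $\mbf S$-retract of $\op{FP}(\mbf S)$ is the image of an idempotent $\mbf S$-endomorphism, hence finitely generated (generated by the images of $\varphi(X)$), so for such a retract the three conditions ``projective'', ``bounded and $(\mathrm W)$'', and ``$\op D(\mbf L) = \mbf L = \op D^d(\mbf L)$ and $(\mathrm W)$'' coincide. I expect the main obstacle to be precisely the negation-preservation bookkeeping in (2)$\Rightarrow$(3): the purely lattice-theoretic splitting is free from projectivity, and the real content of the Basic Lemma is that this splitting can be taken to respect $R_-$ — which works only because every splitting of a surjection is an order embedding and therefore reflects the strict non-inclusions recorded in $R_-$.
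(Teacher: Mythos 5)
Your proof is correct and follows essentially the same route as the paper's: (1)$\Leftrightarrow$(2) from the mapping property of $\op{FP}(\mbf S)$, (2)$\Rightarrow$(3) by taking the image $\mbf G' = h(\op{FP}(\mbf S))$, invoking projectivity of a finitely generated sublattice of a free lattice (Theorem~\ref{thm:proj}) to split $h$ via Theorem~\ref{thm:retract}(3) and setting $r = \rho h$, (3)$\Rightarrow$(2) by composing with an embedding of $\mbf G$ into a free lattice, and (3)$\Leftrightarrow$(4)$\Leftrightarrow$(5) from Theorem~\ref{thm:proj}. Your explicit verification of the $R_-$ bookkeeping (that the splitting $\rho$, being an embedding, reflects order and hence preserves the non-inclusions) and your remark that the $\mbf S$-retract is finitely generated so Theorem~\ref{thm:proj} applies are details the paper leaves implicit, and they are handled correctly.
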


\begin{proof}
The equivalence of (1) and (2) is the definition of \emph{finitely presented lattice}, and the equivalence of (3)--(5) is Theorem~\ref{thm:proj}.

\smallskip \noindent For (2) $\to$ (3), assume that the homomorphism $h$ of (2) exists, and take $\mbf G_0$ to be the image $h(\op{FP}(\mbf S))$ in $\mbf F(Y)$.
Then $\mbf G_0$ is generated by $h(\mbf S)$ (or rather by $X = \{x_1, ..., x_m \} = \mbf{x}$), so we can apply Theorem~\ref{thm:retract}:  a finitely generated lattice is projective if and only if it is a sublattice of a free lattice.
Thus there is a retraction $\rho : \mbf G_0 \to \op{FP}(\mbf S)$ with $h \rho = \mathrm{id}_{\mbf G_0}$.  For (3), we put $\mbf G = \rho(\mbf G_0) = \rho h (\op{FP}(\mbf S))$.

\smallskip \noindent Now assume (3) holds, and again using Theorem~\ref{thm:proj}, then there exists $Y$ and an embedding $\vare : \mbf G \rightarrow \mbf F(Y)$.  For (2), let $h = \vare r$.
\end{proof}

    This ends our preliminary section, toward a proof of decidability of the universal theory of free lattices, as by \ref{lm:basic} we have that there is an $\mbf S$-homomorphism $h : \op{FP}(\mbf S) \to \mbf F(Y)$ if and only if there is an $\mbf S$-retract
$r : \op{FP}(\mbf S) \onto \mbf G$ with $\mbf G$ projective, so we are left to understand when the latter condition holds, and in particular to show that this can be checked algorithmically, this is the content of the next section.

\section{Implementing the strategy}

\subsection{Reduction to finite bounded lattices} \label{subsec:reduction}
Now, let us put everything together in order to determine when a standardized presentation occurs in a free lattice.  The first step, in this subsection, is to reduce the problem to determining when it occurs in a finite bounded lattice. 
The second step, in the next subsection, is to show how to test that.  While reading the next proof we invite the reader to refer to Figure~\ref{fig:maps}.

\begin{thm} \label{thm:reduction}
Assume that $\la \mbf S, R_- \ra$ is standardized (cf.~\ref{def_standard}) and consistent.
Then $\la \mbf S,R_-\ra$ occurs in a free lattice if and only if\/ $\la \mbf S,R_-\ra$ occurs in a finite bounded lattice.
\end{thm}

\begin{proof}
Assume that $\la \mbf S,R_- \ra$ is modeled in a free lattice.  Then by Lemma~\ref{lm:basic} there is a retraction $\rho$ on $\op{FP}(\mbf S)$ such that $\rho$ respects $R_-$, that is, $\rho(u) \nleq \rho(v)$ for all $(u,v) \in R_-$, and the image $\mbf G = \rho(\op{FP}(\mbf S))$ is a projective lattice.

\smallskip \noindent 
Recall Alan Day's construction of the finitely presented lattice $\op{FP}(\mbf S)$ from the partial completion $\op{PC}(\mbf S)$, which is embedded in our Theorem~\ref{thm:daycons}.
Let $\mbf P_0 = \op{PC}(\mbf S)$.  Recursively form $\mbf P_{k+1}=\mbf P_k[I_k]$ as an interval doubling that fixes a WS-failure in $\mbf P_k$ (i.e., an $\mbf S$-disjoint $\mathrm{W}$-failure (recall \ref{W_failures}) in $\mbf P_k$). There is a homomorphism $\lambda_k :\mbf P_{k+1} \onto \mbf P_k$.
Then $\op{FP}(\mbf S)$ is the sublattice generated by $\mbf S$ of the inverse limit of this sequence, and there are natural projections $\pi_k : \op{FP}(\mbf S) \onto \mbf P_k$.
 
\smallskip \noindent 
Now the projective retract $\mbf G$ of the first paragraph is a sublattice of $\op{FP}(\mbf S)$, so we can restrict the maps $\pi_k$ to $\mbf G$.  Let $\mbf Q_k = \pi_k(\mbf G)$, 
and set $\mbf H = \pi_0(\mbf G)$.
Since $\mbf Q_k = \pi_k \rho(\op{FP}(\mbf S))$, each $\mbf Q_k$ is a model of $S_+$.  
In fact, the elements of $\mbf S$ have a unique pre-image under $\pi_0$, and the map $r_k = \pi_k \rho \pi_k^{-1}$ is a $\mathrm{PDL}$ homomorphism from $\mbf S$ to $\mbf Q_k$.   

\smallskip 
\noindent 
On the other hand, notice that if $(u,v) \in R_-$, then $u$ and $v$ are in $\mbf S$.  Now $\rho(u) \nleq \rho(v)$ in $\mbf G$ as $\mbf G$ models $R_-$.  
Thus for some $k_0$ in the sequence of projections we have $r_{k_0}(u) \nleq r_{k_0}(v)$.
(In general, $r_{k_0}\pi_{k_0} = \pi_{k_0} \rho$, but $u$, $v$ are in $S$ and each $\pi_j$ is the identity on $S$.)

\smallskip 
\noindent 
It remains to observe that each $\mbf Q_k$ is a finite bounded lattice.  
In fact, just as $\op{FP}(\mbf S)$ is obtained from $\op{PC}(\mbf S)$ by a sequence of doubling intervals (Theorem~\ref{thm:daycons}), so $\mbf G$ is obtained from $\mbf H$ by restricting those doublings to the appropriate subsets.  But $\mbf G$, being projective, actually satisfies (W), and is generated by the image of $\mbf S$, so $\mbf G$ is the W-cover of $\mbf H$.   
By Theorem~\ref{covers_are_proj}, the lattice $\mbf Q_0 =\mbf H$ is bounded.  As in the proof of that theorem, for a finite lattice $\mbf L$ we have that $\mbf L[I]$ is bounded if and only if $\mbf L$ is bounded (Day \cite{RAD79}).  
Since $\mbf Q_{j+1}$ is obtained by doubling an interval in $\mbf Q_j$, inductively each $\mbf Q_k$ is bounded. 

\smallskip 
\noindent Thus $\mbf Q_{k_0}$ witnesses that $\la \mbf S,R_-\ra$ occurs in a finite bounded lattice.

\begin{figure}
\begin{center}
\begin{tikzpicture}[scale=1]
   \node at (0,-.1)[label=above: $\op{PC}(\mbf S) \supseteq \mbf S$]{};
   \node at (3,0)[label=above: $\mbf H$]{};
   \node at (0-.3,2-.1)[label=above: $\mbf P_k$]{};
   \node at (3,2-.1)[label=above: $\ \mbf Q_k$]{};
   \node at (0-.3,4-.1)[label=above: $\op{FP}(\mbf S)$]{};
   \node at (3,4)[label=above: $\mbf G$]{};

\node at (1.5,4.2)[label=above: $\rho$]{};
\node at (1.5,2.2)[label=above: $r_k$]{};
\node at (1.5+.2,0.2)[label=above: $r_0$]{};

\node at (0-.15,1.3)[label=left: $\lambda_0 \cdots \lambda_{k-1}$]{};
\node at (3-.1,1.3)[label=right: $\lambda_0 \cdots \lambda_{k-1}$]{};
\node at (0-.15,3.3)[label=left: $\pi_k$]{};
\node at (3-.1,3.3)[label=right: $\pi_k$]{};
     
   \draw [->] (1.1,.3) -- (2.65,.3);
  \draw [->] (.7-.3,2.3) -- (2.65,2.3);
  \draw [->] (.7-.3,4.3) -- (2.65,4.3);
  
  \draw [->] (0-.2,1.9) -- (0-.2,.7);
  \draw [->] (3,1.9) -- (3,.7);
  \draw [->] (0-.2,3.9) -- (0-.2,2.7);
  \draw [->] (3,3.9) -- (3,2.7);
  \end{tikzpicture}
\end{center}
\caption
{Maps used in the proof of Theorem \ref{thm:reduction}} \label{fig:maps} 
\end{figure}

\bigskip

\noindent 
Conversely, assume there exist a finite bounded lattice $\mbf K$
 and a $\mathrm{PDL}$ homomorphism $r: \mbf S \to \mbf K$ that respects $R_-$,
 that is, $r(u) \nleq r(v)$ for all pairs $(u,v) \in R_-$.
 Form the W-cover $\widehat {\mbf K}$ as the sublattice generated by $\mbf S$ of the limit of a sequence where $\mbf M_0 = \mbf K$ and $\mbf M_{j+1} = \mbf M_j[I_j]$ with $I_j$ a W-failure interval in $\mbf  M_j$.   
 Then $\widehat{\mbf K}$ is projective by Theorem~\ref{covers_are_proj}.
 Moreover, each $\mbf M_j$ is a model of $\la \mbf S, R_- \ra$ by Lemma~\ref{stabledoub}, using the assumption that the presentation is standardized.  Hence the limit $\mbf M_\omega = \widehat{\mbf K}$ is also a model, and thus a projective model.  
\end{proof}



\subsection{Testing whether $\la \mbf S, R_- \ra$ occurs in a finite bounded lattice} \label{subsec:part2}
Theorem~\ref{thm:reduction} tells us that $\la \mbf S, R_- \ra$ occurs in a free lattice if and only if it occurs
in a finite bounded lattice $\mbf K$, but it does not tell us enough about $\mbf K$ to be turned into
an effective test.  This section provides an algorithm to decide that.

A \emph{pseudovariety} is a class of finite algebras closed under homomorphic images, subalgebras, and finite direct products.
Lattice pseudovarieties include all finite lattices in the following list:
\begin{itemize}
\item  $\msc D$, distributive lattices;
\item $\msc V$, any lattice variety;
\item $\msc {SD}$, semidistributive lattices;
\item $\msc B$, bounded lattices.
\end{itemize}
 For any pseudovariety $\msc K$ and any finite lattice $\mbf L$, there is a least congruence $\zeta$ such that $\mbf L/\zeta \in \msc K$.
We call this the \emph{reflection} of $\msc K$ in $\mbf L$, and denote the congruence by $\zeta(\msc K)$.  
Thus $\mbf L/\theta \in \msc K$ if and only if $\theta \geq \zeta(\msc K)$.

On the other hand, a partially defined lattice $\mbf S$ has its own congruence lattice $\op{Con}\,\mbf S$, 
consisting of the kernels of PDL homomorphisms $h: \mbf S \to \mbf T$ where $\mbf T$ is a PDL or lattice.
PDL congruences of $\mbf S$ are equivalence relations that respect the operations (joins and meets) defined in $\mbf S$.  
To say that $\mbf S$ occurs in $\mbf L$ (ignoring $R_-$ for the time being) means that there is a PDL homomorphism
$r :  \mbf S \to \mbf L$, in which case $\ker r$ is in $\op{Con}\,\mbf S$.
We say that $\varphi \in \op{Con}\,\mbf S$ is a \emph{$\msc K$-congruence} if there exist $\mbf L \in \msc K$
and $r :  \mbf S \to \mbf L$ such that $\varphi = \ker r$.  

Observe that if $r_1 :  \mbf S \to \mbf L_1 \in \msc K$ and $r_2 :  \mbf S \to \mbf L_2 \in \msc K$,
then $r_1 \times r_2 : \mbf S \to \mbf L_1 \times \mbf L_2 \in \msc K$ with $\ker (r_1 \times r_2) = \ker r_1 \ \cap\ \ker r_2$.   
Thus there is also a least $\msc K$-congruence on $\mbf S$, which we denote by $\widehat\zeta(\msc K)$, to distinguish it
from the lattice congruence on $\mbf L$.  
It also holds that  if $r :  \mbf S \to \mbf L \in \msc K$ and $g : \mbf L \onto \mbf M$ is a surjective lattice homomorphism,
then $gr : \mbf S \to \mbf M \in \msc K$ is a PDL homomorphism.    
That does not quite prove that $\varphi$ is a $\msc K$-congruence on $\mbf S$ if and only if $\varphi \geq \widehat\zeta(\msc K)$,
but that will hold when $\msc K$ is the pseudovariety of distributive or bounded lattices, as we will see in the course of the proofs below.

To build any finite bounded lattice, you start with its distributive reflection, and then perform a series of doublings (cf.~Theorem~\ref{thm:RAD1} above).
So our first task will be to find the distributive congruence $\delta := \widehat\zeta(\msc D)$ on $\mbf S$.

\begin{lm} \label{lm:coatoms}
Let\/ $\mbf L$ be a finite bounded lattice.  The following are equivalent for a congruence $\gamma \in \op{Con}\,\mbf L$.
\begin{enumerate}[(1)]
\item  $\gamma$ is a coatom of\/ $\op{Con}\,\mbf L$.
\item  $\mbf L/\gamma \cong \mbf 2$, whence $\gamma$ is the kernel of a lattice homomorphism $g : \mbf L \onto \mbf 2$.
\item  $\gamma$ partitions $\mbf L$ into a prime filter, $\uparrow\! p$, for $p$ a join prime element,
and a prime ideal, $\downarrow\! q$, for $q$ a meet prime element. 
\end{enumerate}
Hence the distributive reflection congruence $\zeta(\msc D)$ on $\mbf L$ is the meet of the coatoms of\/ $\op{Con}\,\mbf L$.
\end{lm}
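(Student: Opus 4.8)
The plan is to prove the equivalences (1) $\Leftrightarrow$ (2) $\Leftrightarrow$ (3) for a single congruence $\gamma$, and then deduce the final ``Hence'' sentence from the characterization together with general facts about distributive reflections.

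\medskip

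\noindent\textbf{The cycle (1)--(3).} First I would handle (1) $\Leftrightarrow$ (2). Since $\mbf 2$ is the unique two-element lattice and it is simple, $\gamma$ being a coatom of $\op{Con}\,\mbf L$ means $\mbf L/\gamma$ is a nontrivial lattice with no proper nontrivial congruence, i.e. $\mbf L/\gamma$ is simple. The subtlety is that a finite simple lattice need not be $\mbf 2$ in general, so the hypothesis that $\mbf L$ is \emph{bounded} must be doing real work here. The cleanest route is to go through (3): I would show that a coatom forces a partition into a prime filter and prime ideal, from which $\mbf L/\gamma \cong \mbf 2$ is immediate, and conversely any such partition is obviously a congruence with two classes, hence a coatom. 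So the substantive content is really (1) $\Leftrightarrow$ (3), with (2) as a convenient intermediary.

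\medskip

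\noindent\textbf{Why boundedness gives the prime filter / prime ideal structure.} For (2) $\Rightarrow$ (3): if $g : \mbf L \onto \mbf 2$ with $\mbf 2 = \{0,1\}$, let $F = g^{-1}(1)$ and $J = g^{-1}(0)$. Then $F$ is a filter and $J$ an ideal, and since $\mbf 2$ is a chain the inverse images are prime: $g(a\join b)=1$ forces $g(a)=1$ or $g(b)=1$, so $J$ is a prime ideal and dually $F$ a prime filter. The heart of the matter is (1)/(3) $\Rightarrow$ the existence of a \emph{join prime} $p$ with $F=\,\uparrow\! p$ and a \emph{meet prime} $q$ with $J=\,\downarrow\! q$. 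This is where I expect the main obstacle, and where boundedness enters. In a finite bounded lattice, every element is a join of join-irreducibles and the bounded condition ($\op D(\mbf L)=\mbf L$) controls join covers; I would argue that the least element of the prime filter $F$ is join prime (a prime filter in a finite lattice is principal, generated by its minimum $p$; primeness of the filter translates directly into the join-prime condition $p\le a\join b \Rightarrow p\le a$ or $p\le b$), and dually the maximum $q$ of the prime ideal $J$ is meet prime. The finiteness guarantees $\min F$ and $\max J$ exist; the prime-filter/prime-ideal conditions give exactly join/meet primeness. Boundedness (or at least semidistributivity) is what rules out finite simple lattices other than $\mbf 2$ arising as $\mbf L/\gamma$, ensuring coatoms really are codimension-one in this strong sense rather than merely quotienting onto an arbitrary simple lattice; I would invoke Theorem~\ref{thm:RAD1} or the fact that bounded lattices are semidistributive to close this gap.

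\medskip

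\noindent\textbf{The ``Hence'' clause.} Finally, for the distributive reflection: by definition $\zeta(\msc D)$ is the least congruence with $\mbf L/\zeta(\msc D)$ distributive. A finite lattice is distributive if and only if it is a subdirect product of copies of $\mbf 2$, equivalently if and only if all its congruence quotients that are subdirectly irreducible are $\cong \mbf 2$. I would argue that $\mbf L/\theta$ is distributive precisely when $\theta$ is a meet of congruences each of whose quotient is $\mbf 2$, i.e. (by the equivalence just proved) a meet of coatoms of $\op{Con}\,\mbf L$. Since $\zeta(\msc D)$ is the \emph{least} such congruence, it is the meet of \emph{all} coatoms whose quotient is $\mbf 2$, and by (1) $\Leftrightarrow$ (2) these are exactly all the coatoms of $\op{Con}\,\mbf L$. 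Hence $\zeta(\msc D) = \Meet\{\gamma : \gamma \text{ a coatom of } \op{Con}\,\mbf L\}$. The one point requiring care is that the meet of coatoms indeed yields a distributive quotient; this follows because the quotient embeds into a product of the $\mbf L/\gamma \cong \mbf 2$, and a subdirect product of copies of $\mbf 2$ is distributive.
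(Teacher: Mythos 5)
Your proposal is correct and takes essentially the same route as the paper: the paper handles (1) $\Rightarrow$ (2) by exactly your key observation that $\mbf 2$ is the only simple finite semidistributive (a fortiori bounded) lattice, notes that (2) $\Rightarrow$ (3) $\Rightarrow$ (1) are clear, and gets the final clause from $\mbf 2$ being the only subdirectly irreducible distributive lattice. Your additional details (the minimum of a prime filter in a finite lattice being join prime, and the subdirect embedding into a power of $\mbf 2$ for the distributive reflection) merely flesh out the same argument.
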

\begin{proof}
(1) implies (2) because $\mbf 2$ is the only simple finite semidistributive lattice, \emph{a fortiori}, the only simple finite bounded lattice.
Clearly (2) implies (3) implies (1).  The last statement is because $\mbf 2$ is the only subdirectly irreducible distributive lattice.
\end{proof}

Assume now that $\mbf L$ is a finite bounded lattice and $h: \mbf S \to \mbf L$ a PDL homomorphism.
Let $\gamma$ be a coatom  of $\op{Con}\,\mbf L$, so that by (2), $\gamma = \ker g$ for say $g : \mbf L \onto \mbf 2$.  
Then $r = gh : \mbf S \to \mbf 2$ is a PDL homomorphism.
Put $\widehat\gamma = \ker r$, so that:
\begin{align*}
  s \ \widehat\gamma\  t \   &\text{ iff }\  gh(s)=gh(t)  \\
                                 &\text{ iff }\  (h(s),h(t)) \in \ker g = \gamma .
\end{align*}
The $\gamma$-classes on $\mbf L$ are a prime ideal $I = g^{-1}(0)$ and a prime filter $F = g^{-1}(1)$.
Now it is possible that $h(S) \subseteq I$ or $h(S) \subseteq F$.
But if not, then $\widehat\gamma$ partitions $\mbf S$ into a prime ideal $\widehat I = h^{-1}(I)$ and prime filter $\widehat F = h^{-1}(F)$.
(These are prime with respect to the defined operations of $\mbf S$, and need not be principal.)


Meanwhile, for any PDL homomorphism $r: \mbf S \onto \mbf 2$ whatsoever, 
the kernel $\ker r$ is a PDL congruence, partitioning $\mbf S$ into a prime
ideal and a prime filter (with respect to the operations defined  in $\mbf S$).  
Conversely, if we partition $\mbf S$ into a prime ideal and a prime filter, then there is a natural $r: \mbf S \to \mbf 2$
with that partition as its kernel.  

Assume we have found all such maps $r_1, \dots, r_\ell : \mbf S \onto \mbf 2$.  
Associated with each $r_j$ is the PDL congruence $\ker r_j$.  
Let $r : \mbf S \to \mbf 2^\ell$ be defined via $r(s)_j = r_j(s)$ for $s \in \mbf S$.  
Then $r$ is a realization of $\mbf S$ in a distributive lattice, and $\ker r = \bigcap \ker r_j$.
Moreover, for the sublattice $\mbf D$ of $\mbf 2^\ell$ generated by $r(\mbf S)$, we have $\delta = \widehat\zeta(\msc D) = \ker r$.

Given $\mbf S$, the first part of our algorithm is to find all the PDL homomorphisms $r_1, \dots, r_\ell : \mbf S \onto \mbf 2$,
and to form their intersection $\delta := \ker r = \bigcap \ker r_j$.
By the above discussion, for a pair $(u,v)$ from $R_-$, the following are equivalent:
\begin{itemize}
\item  $u \leq v$ in the finitely presented distributive lattice $\op{FP}(\mbf S)/\zeta(\msc D)$;
\item  $r(u) \leq r(v)$;
\item  $(u \join v) \ \delta\ v$, i.e., $u \leq v$ mod $\delta$.
\end{itemize}
The first part of our algorithm consists of finding $\delta$ and testing whether the above conditions apply to pairs $(u,v) \in R_-$.
If it happens that $r(u) \nleq r(v)$ for all pairs in $R_-$, then the algorithm returns YES, since finite distributive lattices are bounded.
Otherwise we continue, seeking to find the bounded PDL congruence $\beta = \widehat\zeta(\msc B) \leq \delta$ 
to determine whether $u \nleq v$ mod $\beta$ for all $(u,v) \in R_-$.


To design the second part of the algorithm, let us analyze how doubling an interval is reflected in $\mbf S$ when
there is a realization $r: \mbf S \to \mbf L$.
Assume now that we are given:
\begin{itemize} 
  \item  a PDL $\mbf S$ obtained from a standardized presentation;
  \item  a finite lattice $\mbf L$ and a PDL homomorphism $r : \mbf S \to \mbf L$;
  \item  an interval $I = [b,a]$ in $\mbf L$.
\end{itemize}
(For future reference call this set of conditions $(\yen)$.)
Note that $\kappa := \ker r$ is a PDL congruence on $\mbf S$.
Let $\lambda : \mbf L[I] \to \mbf L$ be the standard map.

Since $\mbf S$ is standardized, by Lemma~\ref{stabledoub} there is a PDL homomorphism $r' : \mbf S \to \mbf L[I]$
such that $r = \lambda r'$.  
In other words, there is some assignment $\alpha : S \to \{ 0,1 \}$ such that 
\[   r'(s) = \begin{cases} r(s)  &\text{ if } r(s) \notin I, \\  (r(s),\alpha(s))  &\text{ if } r(s) \in I \end{cases}    \]
is a PDL homomorphism, witnessing that $\mbf S$ occurs in $\mbf L[I]$.
Let $\kappa' = \ker r'$, noting that $\kappa'$ is a PDL congruence with $\kappa' \leq \kappa$.

Set
\begin{align*}
G &= \{ s \in S : r(s) \geq b \}  \quad \text{(a filter of $\mbf S$)}, \\
J &= \{ s \in S : r(s) \leq a \}  \quad \text{(an ideal of $\mbf S$)}, 
\end{align*}
and consider $G \cap J$.  It could be empty:  in that case, we have $r' = r$ and $\kappa' = \kappa$.

So, assume $G \cap J \ne \varnothing$.  Then, mimicing the assignment $\alpha$, put:
\begin{align*}
A_0 &= \{ s \in S : r'(s) = (r(s),0) \}, \\
A_1 &= \{ s \in S : r'(s) = (r(s),1) \}.
\end{align*}

\begin{lm} \label{lm:split}
With respect to the above setup, we have the following:
\begin{enumerate}[(1)]
\item $s \in G$ if and only if\/ $r(s) \geq b$;
\item $s \in J$ if and only if\/ $r(s) \leq a$;
\item $A_0$ is an ideal of $G \cap J$;
\item $A_0$ is meet prime in $G$;
\item $A_1$ is a filter of $G \cap J$;
\item $A_1$ is join prime in $J$;
\item $G \cap J = A_0 \ \dot\cup\ A_1$ is a union of $\kappa$-classes;
\item $s\ \kappa' \  t$ if and only if we have:
   \begin{enumerate}
   \item $s \ \kappa\ t \notin G \cap J$; or
   \item $s \ \kappa\   t$ and both are in $A_0$; or
   \item $s \ \kappa\   t$ and both are in $A_1$.
   \end{enumerate}
\end{enumerate}
\end{lm}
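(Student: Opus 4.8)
The plan is to unwind the definitions and verify each of the eight claims by tracking how the splitting map $r'$ and the assignment $\alpha$ interact with the order structure of $\mbf L$ and $\mbf L[I]$. Claims (1) and (2) are immediate restatements of the definitions of $G$ and $J$, so I would dispose of them in a single sentence. The substance lies in (3)--(7), which describe the internal structure of the overlap $G \cap J = \{ s \in S : r(s) \in I\}$, and in (8), which identifies the new kernel $\kappa' = \ker r'$.

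First I would establish the convexity and primeness statements (3)--(6). The key geometric fact about the doubling $\mbf L[I]$ is that the copy $I \times \mbf 2$ is ordered so that $(x,0) < (x,1)$, and the elements tagged $0$ form a down-set of the doubled interval while those tagged $1$ form an up-set. Since $r'$ is a PDL homomorphism into $\mbf L[I]$ and $\lambda r' = r$, the sets $A_0$ and $A_1$ are exactly the preimages under $r'$ of $I \times \{0\}$ and $I \times \{1\}$. Thus I would argue that $A_0$ is downward closed \emph{within} $G \cap J$ because if $s \leq t$ in $\mbf S$ with $r'(t)=(r(t),0)$, then $r'(s) \leq (r(t),0)$ forces the second coordinate of $r'(s)$ to be $0$ whenever $r(s) \in I$; dually for $A_1$. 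For meet-primeness of $A_0$ in $G$ (claim (4)), I would use that a defined meet $\Meet s_i$ landing in $A_0$ cannot have all its factors tagged $1$: if it did, their meet in $\mbf L[I]$ would be tagged $1$, since $(r,1)$-elements are closed under the meets that stay inside $I$. This is where I expect to lean on the explicit order rules (1)--(4) of the doubling construction together with the standardization hypothesis, which guarantees (via Lemma~\ref{stabledoub}) that the forced assignments $GM$ and $LJ$ are disjoint and hence $A_0, A_1$ are well defined and exhaust $G \cap J$, giving claim (7).

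The main obstacle will be claim (8), the precise description of $\kappa'$. The inclusion $\kappa' \leq \kappa$ is already noted, so the content is characterizing which $\kappa$-related pairs are \emph{separated} by passing from $\mbf L$ to $\mbf L[I]$. The only way $r'$ can distinguish two elements that $r$ identifies is via the new $0/1$ coordinate inside $I \times \mbf 2$; thus two $\kappa$-equivalent elements remain $\kappa'$-equivalent precisely when either both lie outside $G \cap J$ (so $r'$ agrees with $r$ on them) or both receive the same tag. I would prove the forward direction by noting that $r'$ factors through $\lambda$, so $s \mathrel{\kappa'} t$ implies $s \mathrel{\kappa} t$, and then checking that equal values in $\mbf L[I]$ force equal tags when $r(s)=r(t) \in I$; the reverse direction follows by reading off $r'(s)=r'(t)$ from the three listed cases using (7). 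The genuinely delicate point is confirming that $\kappa'$ so described is actually a PDL \emph{congruence} on $\mbf S$ and agrees with $\ker r'$ — this requires that the partition refinement respects the defined joins and meets, which I would derive from (3)--(6): the ideal/filter and primeness properties of $A_0, A_1$ are exactly what is needed to ensure that a defined operation whose output changes tag has an input that changes tag, so no operation is collapsed across the $A_0/A_1$ boundary inconsistently.

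Throughout I would keep the verifications short, since each reduces to the defining order relations of $\mbf L[I]$ combined with the stability-for-doubling assignment from Lemma~\ref{stabledoub}; the standardization hypothesis is doing the heavy lifting by ruling out elements that are simultaneously forced up and forced down. I do not anticipate needing any machinery beyond Theorem~\ref{doubling}, Lemma~\ref{stabledoub}, and the elementary combinatorics of ideals and filters in a PDL.
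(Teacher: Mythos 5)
Your overall plan is exactly the verification the paper intends: the paper itself offers no written argument beyond the remark that ``the proof of each item is straightforward,'' and your unwinding of the order rules of $\mbf L[I]$, the assignment $r'$ from Lemma~\ref{stabledoub}, and the identity $\kappa' = \ker r'$ is the right route. Items (1), (2), (7), (8) are fine as you sketch them: (1)--(2) are definitions, $G \cap J = A_0\ \dot\cup\ A_1$ is immediate once $r'$ exists (and $G \cap J$ is a union of $\kappa$-classes simply because membership depends only on $r(s)$), and your two-direction reading of (8) from $r'(s)=r'(t)$ is correct. One caution: your ``genuinely delicate point'' in (8) is not actually there. The relation $\kappa'$ is \emph{defined} as $\ker r'$, and $r'$ is a PDL homomorphism by Lemma~\ref{stabledoub}, so its kernel is automatically a PDL congruence; no separate verification via (3)--(6) is needed, and building one in only obscures the argument.

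There is, however, one genuine gap: your argument for (4) only rules out a defined meet $\Meet s_i \in A_0$ having \emph{all} factors tagged $1$. But (4) asserts meet-primeness in $G$, and the complement of $A_0$ in $G$ is $A_1 \cup (G \setminus J)$: some factors may be \emph{untagged}, i.e.\ $s_i$ with $r(s_i) \geq b$ but $r(s_i) \nleq a$, so that $r'(s_i) = r(s_i) \in \mbf L - I$. You must show that the meet of a mixed family --- some $r'(s_i) = (r(s_i),1)$, some $r'(s_i) \in \mbf L - I$ above $b$ --- is still tagged $1$ whenever it lands in $I \times \mbf 2$. This follows from order rule (4) of the doubling: with $z = \Meet r(s_i) \in I$, the element $(z,1)$ satisfies $(z,1) \leq (r(s_i),1)$ for the tagged factors and $(z,1) \leq r(s_i)$ for the untagged ones (since $(v,j) \leq y$ iff $v \leq y$ in $\mbf L$); so $(z,1)$ is a lower bound, and since $\lambda$ sends the meet to $z$, the meet is $(z,1) \in I \times \{1\}$, contradicting membership in $A_0$. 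The dual repair is needed in (6). Relatedly, (3) requires closure of $A_0$ under defined joins that stay in $G \cap J$, not just down-closure; you state only the latter, though the join case is the same one-line computation inside the sublattice $I \times \mbf 2$.
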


The proof of each item is straightforward.
In view of \ref{lm:split}(8), we say that $\kappa'$ \emph{splits} the $\kappa$-classes.
However, it can happen that $A_0 = G \cap J$ or $A_1 = G \cap J$, in which case
we again get $\kappa' = \kappa$ and not a proper refinement.

Our algorithm depends on the following lemma.

\begin{lm} \label{lm:bdedrecur}
Assume we are given a bounded lattice $\mbf L$ and a PDL homomorphism $r : \mbf S \to \mbf L$ with kernel $\kappa$. 
Assume there exist a filter $G$ of\/ $\mbf S$, and an ideal $J$ of\/ $\mbf S$, such that $G \cap J \ne \varnothing$.  
In $\mbf L$, set:
\begin{align*}
b &= \Meet r(G \cap J), \\
a &= \Join r(G \cap J),
\end{align*} 
whence $b \leq a$.  Assume
\begin{enumerate}[(1)]
\item $s \in G$ if and only if\/ $r(s) \geq b$;
\item $s \in J$ if and only if\/ $r(s) \leq a$.
\end{enumerate}
Suppose there is an assignment $\alpha : G \cap J \to \mbf 2$ such that:
\begin{enumerate}[(3)]
\item[(3)] $A_0 = \{ s \in G \cap J : \alpha(s)=0 \}$ is an ideal of $G \cap J$;
\item[(4)] $A_0$ is meet prime in $G$;
\item[(5)] $A_1 = \{ s \in G \cap J : \alpha(s)=1 \}$ is a filter of $G \cap J$;
\item[(6)] $A_1$ is join prime in $J$;
\item[(7)] $G \cap J = A_0 \ \dot\cup\  A_1$ is a union of $\kappa$-classes;
\item[(8)]  there exist $s$, $t \in G \cap J$ such that $s \ \kappa\ t$, $s \in A_0$, $t \in A_1$.
\end{enumerate}
Let $I=[b,a]$. 
Then the map $r' : \mbf S \to \mbf L[I]$ given by 
\[   r'(s) = \begin{cases} r(s)  &\text{ if } s \notin G \cap J, \\  (r(s),\alpha(s))  &\text{ if } s \in G \cap J \end{cases}    \]
is a PDL homomorphism.
Moreover,  $\kappa' = \ker r' < \ker r = \kappa$.
\end{lm}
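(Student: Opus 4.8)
The plan is to verify directly that $r'$ is a PDL homomorphism and then read off the strict inequality from hypothesis (8). First I would settle well-definedness. Hypotheses (1) and (2) say exactly that $s \in G \cap J$ iff $b \le r(s) \le a$, i.e. $r(s) \in I$; thus $r'$ carries the doubled elements of $\mbf S$ (those in $G \cap J$) into $I \times 2$ and everything else into $\mbf L - I$, so $r'$ really lands in $\mbf L[I]$. By construction $\lambda r' = r$, where $\lambda : \mbf L[I] \to \mbf L$ is the collapsing epimorphism of Theorem~\ref{doubling}; this identity will do most of the bookkeeping.

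Next, order preservation. Given $s \le t$ in $\mbf S$ we have $r(s) \le r(t)$ in $\mbf L$, and I would check $r'(s) \le r'(t)$ against the four clauses of Day's order on $\mbf L[I]$. The three cases in which at least one of $s,t$ lies outside $G \cap J$ reduce immediately to $r(s) \le r(t)$. The only delicate case is $s,t \in G \cap J$, where I additionally need $\alpha(s) \le \alpha(t)$; this is precisely the assertion that $s \in A_1$ forces $t \in A_1$, which holds because $A_1$ is a filter of $G \cap J$ (assumption (5)), equivalently because $A_0$ is an ideal (assumption (3)).

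The heart of the argument is preservation of the defined meets, joins being dual. Suppose $s = \Meet_i s_i$ in $\mbf S$. Since $s \le s_i$ and $G$ is a filter, every $s_i \in G$. Because $\lambda$ is a lattice homomorphism and $\lambda r' = r$ with $r$ a PDL homomorphism, $\lambda\left(\bigwedge_i r'(s_i)\right) = \bigwedge_i r(s_i) = r(s)$, so the meet computed in $\mbf L[I]$ lies in the fibre $\lambda^{-1}(r(s))$. If $r(s) \notin I$ this fibre is the single point $r(s) = r'(s)$ and we are done; if $r(s) \in I$ (so $s \in G \cap J$) the fibre is $\{(r(s),0),(r(s),1)\}$. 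Here I would verify from Day's order that $(r(s),0)$ is always a lower bound of the $r'(s_i)$, while the upper copy $(r(s),1)$ is a lower bound, hence equal to the meet, iff every meetand that lies in $G \cap J$ lies in $A_1$; using the partition (7) this says $\bigwedge_i r'(s_i) = (r(s),1)$ exactly when no meetand is in $A_0$. It remains to match this with $\alpha(s)$: if some $s_i \in A_0$, then $s \le s_i$ together with $A_0$ being an ideal of $G \cap J$ (assumption (3)) gives $s \in A_0$; conversely, if $s \in A_0$, then meet-primality of $A_0$ in $G$ (assumption (4)) yields some meetand in $A_0$. Thus $\alpha(s) = 1$ iff no meetand is in $A_0$, which is exactly the second coordinate of the computed meet, so $r'(\Meet_i s_i) = \bigwedge_i r'(s_i)$. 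The dual computation for joins consumes (5) and the join-primality (6).

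Finally, $\kappa' = \ker r' \le \ker r = \kappa$ since $\lambda r' = r$, and strictness is immediate from (8): for the pair $s \in A_0$, $t \in A_1$ with $s \mathrel{\kappa} t$ we have $r'(s) = (r(s),0) \ne (r(s),1) = r'(t)$, so $(s,t) \in \kappa \setminus \kappa'$. I expect the main obstacle to be the meet/join computation inside the doubling $\mbf L[I]$: one must argue carefully from Day's order that the greatest lower bound selects the upper copy exactly when all doubled meetands sit in $A_1$, and then recognize that the ideal and meet-prime hypotheses (3)--(4) are precisely what is needed to make this selection agree with $\alpha$. Everything else is routine case-checking.
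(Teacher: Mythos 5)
Your proposal is correct and fills in exactly the verification that the paper leaves implicit (the paper's proof consists of the remark that ``the calculations are straightforward, with (8) ensuring that $\kappa' < \kappa$''): your fibre analysis via $\lambda r' = r$, the identification of when the meet in $\mbf L[I]$ selects the upper copy $(r(s),1)$, and the matching of that selection with $\alpha$ through hypotheses (3)--(6) are precisely the intended routine calculations, consistent with the setup of Lemma~\ref{lm:split} and Lemma~\ref{stabledoub}. The strictness argument from (8) via the pair $s \in A_0$, $t \in A_1$ with $s\ \kappa\ t$ is also exactly as intended.
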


Again, the calculations are straightforward, with (8) ensuring that $\kappa' < \kappa$.


Our procedure is to recursively apply Lemma~\ref{lm:bdedrecur}.
Start with $r_0 : \mbf S \to \mbf L_0 = \mbf D \leq \mbf 2^\ell$  constructed in the first part, with $\kappa = \ker r_0 = \delta$.
If the hypotheses of the lemma can be satisfied, we get $r_1 : \mbf S \to \mbf L_1 = \mbf L_0[I_0]$ with $\ker r_1 < \delta$. 
If the hypotheses can be satisfied with $r_1$ and $\mbf L_1$, apply the lemma again.
Continue until you reach $r_m$ and $\mbf L_m$ for which there is no assignment $\alpha$ satisfying the hypotheses.

Note that $\mbf L_m$ is bounded, because $\mbf L_0$ is distributive and $\mbf L_{j+1} = \mbf L_j[I_j]$ is an interval doubling.
Also $m \leq |S|-1$, because $\kappa_m < \kappa_{m-1} < \dots < \kappa_0$ is a chain of partitions of $S$, ordered by refinement. 

We claim that $\kappa_m = \beta = \widehat\zeta(\msc B)$.  
Suppose to the contrary that $\kappa_m > \beta$.  
To avoid confusion, let us proceed carefully.  
\begin{itemize}
\item  Given are $r_m : \mbf S \to \mbf L_m$ with $\ker r_m = \kappa_m$
 and say $h: \mbf S \to \mbf K$ with $\ker h = \beta < \kappa_m$.  
\item  For $\theta \in \op{Con}\,\mbf K$ let $f_\theta : \mbf K \to \mbf K/\theta$  
 and $q_\theta = f_\theta h : \mbf S \to \mbf K/\theta$.
\item  Choose $\theta$ maximal in $\op{Con}\,\mbf K$  such that $\ker q_\theta \ngeq \kappa_m$.
 Let $\mbf N = \mbf K/\theta$.
 \item Let $\xi \succ \theta$ $\op{Con}\,\mbf K$, so that $\ker q_\xi \geq \kappa_m$.
 Let $\mbf M = \mbf K/\xi$.
\item  By the proof of Theorem~\ref{thm:RAD1} in \cite{RAD77}, since $\xi \succ \theta$,
there is an interval $J$ of $\mbf M$ such that $\mbf N \cong \mbf M[J]$.
\item Note $\mbf L_m \times \mbf M[J] \cong (\mbf L_m \times \mbf M)[\mbf L_m \times J]$.
\item Now we have the setup for Lemma~\ref{lm:split}:  
$r_m \times q_\xi : \mbf S \to \mbf L_m \times \mbf M$ and
$r_m \times q_\theta : \mbf S \to (\mbf L_m \times \mbf M)[\mbf L_m \times J]$.  
\item Moreover, $\ker (r_m \times q_\xi) = \ker r_m \,\cap\, \ker q_\xi = \kappa_m$, 
while  $\ker (r_m \times q_\theta) = \ker r_m \,\cap\, \ker q_\theta < \kappa_m$.
\item  Thus a proper refinement using Lemma~\ref{lm:split} is possible, contradicting the original
assumption about $r_m$ and $\mbf L_m$.
\item We conclude that $\kappa_m = \beta$.
\end{itemize}

This gives us the algorithm for determining whether a standardized $\la \mbf S,R_- \ra$ occurs 
in a finite bounded lattice.
\begin{enumerate}
\item Find the distributive reflection congruece $\delta$ on $\mbf S$ as in the first part.
\item Refine by doubling, as in the second part, until no further refinements consistent with
$\mbf S$ are possible.  This yields the bounded reflection congruence $\beta$ on $\mbf S$.  
\item  For each pair $(u,v)$ in $R_-$, we have:
\begin{itemize}
\item  if $u \leq v$ mod $\beta$, then $\la \mbf S, u \nleq v \ra$ cannot be modeled.
\item  if $u \nleq v$ mod $\beta$, then the final representation $\mbf L_m$ models it.
\end{itemize}
\end{enumerate}
If $u \nleq v$ mod $\beta$ for every pair $(u,v) \in R_-$, the algorithm returns YES; otherwise it returns NO.

Thus we can finally conclude:

 \begin{thm}\label{main_theorem_2} The existential theory of infinite free lattices is decidable.   
 \end{thm}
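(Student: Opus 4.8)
The plan is to assemble the machinery developed throughout Sections~2 and~3 into a terminating decision procedure and then argue that it correctly decides truth in an infinite free lattice. First I would reduce an arbitrary existential sentence to finitely many standardized presentations. Given any $\exists$-sentence, put it in prenex disjunctive normal form $\exists \mbf x\, (\psi_1 \,\op{OR}\, \dots \,\op{OR}\, \psi_p)$ with each $\psi_j$ a conjunction of literals, so that the sentence holds in $\mbf F(Y)$ if and only if some $\psi_j$ does. Each $\psi_j$ is a presentation $\la \mbf x, R_+, R_- \ra$. By Lemma~\ref{lm:std} I replace it with a finite collection of standardized presentations $\la \mbf x, T_j, U_j \ra$ such that the original occurs in a lattice satisfying $(\op W)$ exactly when one of the standardized ones does; since free lattices satisfy $(\op W)$, the sentence holds in $\mbf F(Y)$ iff some standardized presentation occurs in $\mbf F(Y)$. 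For each, I run Skolem's algorithm (Theorem~\ref{skolem}) to test consistency with lattice theory, discarding the inconsistent ones, and for the consistent ones I form the $\mathrm{PDL}$ $\mbf S$.

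Next I would invoke the chain of equivalences that makes the problem finitary. By the Basic Lemma~\ref{lm:basic}, $\mbf S$ occurs in some $\mbf F(Y)$ if and only if $\op{FP}(\mbf S)$ retracts onto a projective lattice respecting $R_-$; and by Theorem~\ref{thm:reduction}, since the presentation is standardized and consistent, this happens if and only if $\la \mbf S, R_- \ra$ occurs in some \emph{finite bounded} lattice. This is the crucial reduction from an infinite object ($\mbf F(Y)$, or the inverse limit $\op{FP}(\mbf S)$) to a condition about finite bounded lattices, which are the objects the algorithm can actually manipulate.

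Then I would run the two-part algorithm of Subsection~\ref{subsec:part2} inside the finite lattice $\op{Con}\,\mbf S$. The first part computes the distributive reflection congruence $\delta = \widehat\zeta(\msc D)$ by enumerating all PDL maps $r_j : \mbf S \onto \mbf 2$ (equivalently, all partitions of $\mbf S$ into a prime ideal and a prime filter) and intersecting their kernels. The second part recursively applies Lemma~\ref{lm:bdedrecur}, each step properly refining the current congruence by an interval-doubling witnessed by an assignment $\alpha$ satisfying conditions (3)--(8); this terminates after at most $|S|-1$ steps because $\kappa_m < \dots < \kappa_0 = \delta$ is a strictly decreasing chain of partitions of the finite set $S$. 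The key correctness claim, argued via the maximal-congruence/doubling dissection in the displayed bullet list using Lemma~\ref{lm:split} and the proof of Theorem~\ref{thm:RAD1}, is that the terminal congruence equals the bounded reflection $\beta = \widehat\zeta(\msc B)$. Finally I check each $(u,v) \in R_-$: the standardized presentation occurs in a finite bounded lattice iff $u \nleq v \bmod \beta$ for every such pair, and the algorithm returns YES exactly in that case.

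The main obstacle, and the heart of the argument, is the identification of the terminal congruence with $\beta$. The subtle point is showing that the greedy recursion cannot terminate prematurely: if $\kappa_m > \beta$ strictly, I must manufacture a further legal refinement, contradicting termination. This is exactly where the careful dissection is needed -- choosing $\theta$ maximal in $\op{Con}\,\mbf K$ with $\ker q_\theta \ngeq \kappa_m$, its cover $\xi$, recognizing $\mbf K/\theta \cong (\mbf K/\xi)[J]$ via the bounded-lattice doubling theorem, and then transporting this doubling through the product $\mbf L_m \times \mbf M$ so that Lemma~\ref{lm:split} yields the forbidden refinement. Everything else is bookkeeping: the termination bound is immediate from finiteness of $S$, and the per-step hypothesis checks of Lemma~\ref{lm:bdedrecur} involve only finitely many candidate assignments $\alpha$ on $G \cap J$. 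Hence the whole procedure halts and decides correctly, and since the universal and existential theories are interdefinable by negation, Theorem~\ref{main_theorem_2} (and equivalently Theorem~\ref{main_theorem}) follows.
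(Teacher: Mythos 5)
Your proposal is correct and follows essentially the same route as the paper's proof: prenex disjunctive normal form, Skolem consistency testing and standardization via Lemma~\ref{lm:std}, the reduction through Lemma~\ref{lm:basic} and Theorem~\ref{thm:reduction} to occurrence in a finite bounded lattice, and the two-part congruence algorithm of Section~\ref{subsec:part2} computing $\delta$ and then $\beta$, including the key correctness argument that the terminal congruence equals $\widehat\zeta(\msc B)$. The only deviation is the order in which you apply standardization and Skolem's algorithm, which is immaterial since the paper itself uses both orderings.
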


\smallskip \noindent 
This is of course equivalent to the fact that the universal theory of free lattices is decidable, thus proving our Main Theorem \ref{main_theorem}.

\begin{proof}
A sentence $\exists \mbf x \  \psi(\mbf x)$, where $\psi$ is quantifier-free, holds in a lattice $\mbf L$ if there exists an assignment of the variables
$\varepsilon : \mbf x \to L$ such that $\psi(\varepsilon \mbf x)$ is true.
We want a procedure to decide whether such a sentence holds in some/every infinite free lattice $\mbf F_n$ with $n \geq 3$.

\smallskip \noindent The first step is to put $\psi(\mbf x)$ into disjunctive normal form.  This yields a sentence:
\[ (\dagger) \quad  \exists \mbf x \  \psi_1(\mbf x)  \,\op{OR}\, \dots \,\op{OR}\, \psi_p(\mbf x), \]
where $\mbf x = (x_1, ..., x_m)$ and each $\psi_j(\mbf x)$ is a (finite) conjunction of lattice literals, i.e., $s(\mbf x) \leq t(\mbf x)$ or $u(\mbf x) \not\leq v(\mbf x)$.

\smallskip \noindent For each $\psi_j(\mbf x)$, we use Skolem's Algorithm (Theorem~\ref{skolem}) to test whether the sentence is
consistent with lattice theory, i.e., whether it holds in any lattice.   If every $\psi_j(\mbf x)$ is inconsistent, then 
the answer is NO:  $\exists \mbf x \  \psi(\mbf x)$ does not occur in any lattice, much less a free one.
If some of the $\psi_j$ are consistent, then remove the inconsistent ones, and for the ones that are consistent
form a pair $\la \mbf S_j, R_{-j} \ra$ consisting of a partially defined lattice and negations.  

\smallskip \noindent These will be tested separately, to see if any one of them occurs in a free lattice.
But first we standardize each presentation, using Lemma~\ref{lm:std}.  This replaces each presentation 
$\la \mbf S_j, R_{-j} \ra$ with a disjunction of standardized presentations of the same form.  
The problem still has the form of $(\dagger)$, but with more options to be tested.

\smallskip \noindent By Theorem~\ref{thm:reduction}, a standardized configuration $\la \mbf S, R_- \ra$ occurs in a free lattice
if and only if it occurs in a finite bounded lattice.  That, in turn, in tested by the algorithm of this Section~\ref{subsec:part2}.
If the answer is YES for any of the configurations in the disjunction, then $\exists \mbf x \  \psi(\mbf x)$ occurs in a free lattice.
If the answer is NO for every option, then the overall answer is NO, and it does not occur in a free lattice.
\end{proof}

\subsection{Examples}\label{sec:examples}


The algorithm in the proof of the Main Theorem determines whether $\la \mbf S,R_- \ra$ occurs in a finite bounded lattice, or equivalently, a free lattice.  The algorithm returns YES if the configuration occurs, and NO if it does not.  This depends on what pairs $(u,v)$ are in $R_-$; after all, the positive part is modeled in a 1-element lattice.  We give five examples.

{\bf \#1.}
Consider the existential sentence 
\[ \psi_1 : \quad \exists x\exists y \exists z \quad x \leq y \join z \,\&\, y \leq x \join z \,\&\,z \leq x \join y \,\&\,x \nleq y \,\&\, \dots \,\&\, z \nleq y \]
where the last part just says that $x$, $y$, $z$ are an antichain.  
Then, after applying Skolem's algorithm, $\mbf S$ just has 4 elements:  $x$, $y$, $z$, $t=x \join y = x \join z = y \join z$.  
Here $X= \{ x,y,z \}$ but $S = \{ x,y,z,t \}$.  
As a partially defined lattice, $R_+$ has, besides $x \join y = t$ etc., the operations reflecting the order:
$x \join x = x$, $x \join t = t$, $x \meet t = x$, and so forth.  Meanwhile, $R_-$ contains the inequalities that
$x$, $y$, and $z$ are incomparable, such as $x \nleq y$.
But $x \meet y$, $x \meet z$, and $y \meet z$ are undefined,
while $x \meet y \meet z$, not corresponding to an element of $S$, is nothing at this point.  The configuration is illustrated on the left in Figure~\ref{fig:ex12}.

The previous paragraph is all setup.  
To apply the algorithm above, form the PDL $\mbf S$.
The partition $\delta_x = [ x | yzt ]$ divides $S$ into a prime ideal and a prime filter.  The PDL congruences $\delta_y$ and $\delta_z$ are symmetric.  The distributive reflection congruence $\delta = \delta_x \cap \delta_y \cap \delta_z$ is the identity equivalence relation $[x|y|z|t]$ on $S$.  Moreover $x$, $y$, $z$ are an antichain in $\mbf S/\delta$.  It follows that the sentence $\psi_1$ can be modeled in the distributive lattice $\mbf S/\delta \cong \mbf 2^3$.  The second part of the algorithm, refining the distributive reflection $\delta$ to the bounded reflection $\beta$, is skipped since $\delta$ is already the identity congruence on $\mbf S$, whence $\delta=\beta$.  
The algorithm returns YES.

{\bf \#2.}  Consider the sentence 
\[ \psi_2 : \quad \exists x\exists y \exists z \quad  z \leq x \join y \ \&\ y \leq x \join z \ \&\  y \nleq x \join (y \meet z) \]
Again following Skolem, we form the PDL $\mbf S = \{ x,y,z,t,m,s \}$ with the defined relations
\[   x \join y = x \join z = t  
\qquad\quad y \meet z = m \qquad\quad x \join m = s \]
The configuration is illustrated on the right in Figure~\ref{fig:ex12}.  Only the order and the above operations are defined in $\mbf S$:  for example, $y \join z$ is undefined.

First find all the prime-ideal/prime-filter combinations.
Intersecting those gives the distributive reflection congruence
$\delta = [ x|y|z|m|st ]$ representing $\mbf S$ in a distributive lattice, indicated by red curves in the figure. 
Modulo $\delta$, we have 
\[  y \leq t \ \delta\  s = x \join (y \meet z) \]
so $R_-$ is not satisfied.  That being the case, we attempt to use the second part of the algorithm to refine $\delta$, that is,
to split the class $[ st ]$ into $A_0 \ \dot\cup\  A_1$.
Suppose that were possible.
Because $x \join y = t$ in $J =\, \downarrow\! t$, it would force $t \notin A_1$, and hence $s \leq t \in A_0$, a contradiction.
Thus no proper bounded refinement is possible, whence $\delta = \beta$, and the algorithm returns NO.

This example is a roundabout way of saying that the join semidistributive law holds in a bounded lattice, so that $t = x \join y = x \join z$ implies $t =  x \join (y \meet z)$.  

\begin{figure}
\begin{center}
\begin{tikzpicture}
\tikzstyle{every node}=[scale=1.2,draw,circle,fill=white,minimum size=5pt,inner sep=0pt,label distance=1.2mm] 
    \node (0) at (-1,0) [label=below:$x$] {};
    \node (1) at (0,0) [label=below:$y$] {};
    \node (2) at (1,0) [label=below:$z$] {};
    \node (3) at (0,1) [label=right:$t$] {};

   \draw (0)--(3);
   \draw (1)--(3);
   \draw (2)--(3);
    \draw [color=red, thick] (0) circle (.2);  
    \draw [color=red, thick] (1) circle (.2);  
    \draw [color=red, thick] (2) circle (.2);  
    \draw [color=red, thick] (3) circle (.2);  
\end{tikzpicture}
\qquad\quad
\begin{tikzpicture}
\tikzstyle{every node}=[scale=1.15,draw,circle,fill=white,minimum size=5pt,inner sep=0pt,label distance=1.2mm] 
    \node (0) at (-1,1) [label=left:$s$] {};
    \node (1) at (0,1) [label=right:$y$] {};
    \node (2) at (1,1) [label=right:$z$] {};
    \node (3) at (0,2) [label=160:$t$] {};
    \node (4) at (-2,0) [label=left:$x$] {};
    \node (5) at (0,0) [label=right:$m$] {};

   \draw (5)--(0)--(3);
   \draw (5)--(1)--(3);
   \draw (5)--(2)--(3);
   \draw (4)--(0);
    \draw [color=red, thick] (4) circle (.2);  
    \draw [color=red, thick] (1) circle (.2);  
    \draw [color=red, thick] (2) circle (.2);  
    \draw [color=red, thick] (5) circle (.2);  
    \draw [color=red, rounded corners, thick] (-1,1-.25) -- (0+.25,2) -- (0,2+.25) -- (-1-.25,1) -- cycle;
\end{tikzpicture}

\caption
{Example \#1 (left).  The defined relations are $x \join y = x \join z = y \join z = t$. 
Example \#2 (right).  The defined relations are $x \join y = x \join z = t$, $y \meet z = m$, $x \join m = s$. 
Red curves indicate the distributive congruence $\delta$.}
\label{fig:ex12}
\end{center}
\end{figure}

{\bf \#3.}  
Consider  
\[ \psi_3 : \quad \exists x\exists y \exists z \quad  
x \meet z \leq y \leq x \leq y \join z \ \&\  x \nleq y  \]
These are the relations that define a pentagon, with $x \nleq y$ saying that in fact $y < x$, so that the critical quotient does not collapse.
Adding the variable names $m = x \meet z$ and $t = y \join z$,
form the PDL $\mbf S = \{ x,y,z,m,t \}$ with those defining 
relations, as illustrated on the left in Figure~\ref{fig:ex34}.
It is straightforward to calculate that the distributive congruence is $\delta = [ xy|z|m|t ]$.  However, $\mbf S/\delta$ does not satisfy $R_-$ because $x \ \delta\  y$, so we try to refine $\delta$ using the second part of the algorithm.
In fact, 
the congruence class $[ xy ]$ can be split by the doubling 
with  $A_0 = \{ y \}$ and $A_1 = \{ x \}$, the dashed line in the figure.  That gives the bounded reflection $\beta = [ x|y|z|m|t ]$ with $\mbf S/\beta$ satisfying $\psi_3$, as $y < x$ mod $\beta$.
The algorithm returns YES, and indeed it has constructed the pentagon as a model of $\psi_3$.

{\bf \#4.}  Next consider
\[ \psi_4 : \quad \exists x\exists y \exists z \quad 
x \join y = x \join z  \ \&\  y \meet x = y \meet z 
\ \&\  y \nleq z  \ \&\  z \nleq x  \]
Form the PDL $\mbf S = \{ x,y,z,m,t \}$ with $x \join y = x \join z = t$ and $y \meet x = y \meet z = m$.  
The configuration is illustrated on the right in Figure~\ref{fig:ex34}, but note that $y \join z$ and $x \meet z$ are undefined. 
The distributive reflection is $\delta = [my|z|xt ]$.
As indicated in the figure, $\mbf S/\delta$ is a 3-element chain that does not satisfy $R_-$.

Hence we proceed to the second part of the algorithm, looking for a bounded refinement of $\delta$.
But the lower class $[my]$ cannot split because if $y \in A_1$, then $y \meet z = m$ would imply $m \in A_1$ (as $A_0$ is meet prime in $G$).  Dually, neither can the upper class $[xt]$ be split.  
We conclude that $\delta = \beta$, not satisfying $\psi_4$, and the algorithm returns NO.
This example is a variation of $\#2$ with both semidistributive laws.

\begin{figure}
\begin{center}
\tikzstyle{every node}=[scale=1.,draw,circle,fill=white,minimum size=5pt,inner sep=0pt,label distance=1.2mm] 
\begin{tikzpicture}
   
    \node (0) at (0,0) [label=right :$m$] {};
    \node (1) at (1.,1.5) [label=right:$z$] {};
    \node (2) at (0,3) [label=right:$t$] {};
    \node (4) at (-1,1) [label=left:$y$] {};
    \node (5) at (-1,2) [label=left:$x$] {};

   \draw (0)--(1)--(2)--(5)--(4)--(0);
    \draw [color=red, thick] (0) circle (.2);  
    \draw [color=red, thick] (1) circle (.2);  
    \draw [color=red, thick] (2) circle (.2);  
    \draw [color=red, rounded corners, thick] (-1-.2,1-.2) -- (-1+.2,1-.2) -- (-1+.2,2+.2) -- (-1-.2,2+.2) -- cycle;
    \draw[dashed, thick] (-1.5,1.5)--(-.5,1.5);
\end{tikzpicture}
\qquad\quad
\begin{tikzpicture}
\tikzstyle{every node}=[scale=1.15,draw,circle,fill=white,minimum size=5pt,inner sep=0pt,label distance=1.2mm] 
    \node (0) at (-1,1) [label=left:$x$] {};
    \node (1) at (0,1) [label=right:$z$] {};
    \node (2) at (1,1) [label=right:$y$] {};
    \node (3) at (0,2) [label=160:$t$] {};
    \node (5) at (0,0) [label=-20:$m$] {};

   \draw (5)--(0)--(3);
   \draw (5)--(1)--(3);
   \draw (5)--(2)--(3);
    \draw [color=red, thick] (1) circle (.2);  
    \draw [color=red, rounded corners, thick] (-1,1-.25) -- (0+.25,2) -- (0,2+.25) -- (-1-.25,1) -- cycle;
    \draw [color=red, rounded corners, thick] (0,-.25) -- (1+.25,1) -- (1,1+.25) -- (-.25,0) -- cycle;
\end{tikzpicture}

\caption
{Example \#3 (left).  The defined relations are $m = x  \meet z \leq y \leq x \leq y \join z = t $. 
Example \#4 (right).  The defined relations are $x \join y = x \join z = t$, $y \meet x = y \meet z = m$. 
Red curves indicate the distributive congruence $\delta$.}
\label{fig:ex34}
\end{center}
\end{figure}

{\bf \#5.}  This example shows that more than one doubling may be required in the second part of our algorithm.
Consider
\begin{align*}
\psi_5 : \quad &\exists s\exists p \exists q \exists r \exists u \quad 
s \leq p \leq r  \ \&\   u \leq q \ \&\  
p \leq s \join q \ \&\   p \nleq s \join u  \ \&\ 
\\
&q \leq u \join r \ \&\  q \nleq u \join p  \ \&\ 
p = r \meet (p \join q) \ \& \ u = q \meet (u \join p) \ \&\    
\\
&s = p \meet (s \join u) 
\end{align*}
Form the PDL $\mbf S = \{ s,p,r,q,u,v,w,x,t  \}$ with $v = s \join u$, $w = u \join p$, $x= p \join q =s \join q$, and $t = q \join r = u \join r$.
The configuration is illustrated in Figure~\ref{fig:ex5}.
Some additional relations can be derived, including $r \meet v = r \meet x \meet v = p \meet v = s$.
Note that $R_- = \{ (p,v), (p,w) \}$.
One can check that the distributive reflection is $\delta = [sp|r|qu|vwx|t]$.  Since $p \ \delta\  s \leq v$ and $q \ \delta\ u \leq w$,
$R_-$ is not satisfied in $\mbf S/\delta$, so we continue to the second part of the algorithm.

One cannot separate $[sp]$ without first separating $x$ and $v$, because $r \meet x = p$ and $r \meet v = s$.
But one can split $[qu]$ as long as we also separate $x$ and $w$, since $q \meet x = q$ and $q \meet w = u$.
Splitting along the purple dashed lines gives $\beta_1 = [sp|r|q|u|vw|x|t]$.
Of course $\beta_1 < \delta$, but it still contains $[sp]$.
Again we try to split $[sp]$.  The previous objection is removed, because $(w,x) \notin \beta_1$, and we can
split along the blue dashed lines, since we must also separate $v$ and $w$.
That yields $\beta_2 = [s|p|r|q|u|v|w|x|t]$.

Now $\mbf S/\beta_2$ models $\la \mbf S,R_- \ra$, and the algorithm returns YES.

\begin{figure}
\begin{center}
\tikzstyle{every node}=[draw,circle,fill=white,minimum size=5pt,inner sep=0pt,label distance=1mm] 
\begin{tikzpicture} 
    \node (1) at (-1,1) [label=210:$s\,$] {};
    \node (2) at (1,1) [label=-30:$\,u$] {};
    \node (3) at (-2,2) [label=210:$p\,$] {};
    \node (4) at (0,2) [label=30:$\,v$]{};
    \node (5) at (2,2)[label=-30:$\,q$] {};
    \node (6) at (-1,3)[label=left:$w\ $] {};
    \node (7) at (0,4) [label=30:$\,x$]{};
    \node (8) at (-3,3)  [label=210:$r\,$] {};
    \node (9) at (-1,5)[label=30:$\,t$] {};
   \draw (1)--(3)--(6)--(7)--(5)--(2);
    \draw (2)--(4)--(6);
    \draw (1)--(4);
    \draw (3)--(8)--(9)--(7);
 \draw [color=red, thick] (8) circle (.2);  
 \draw [color=red, thick] (9) circle (.2);  
 
     \draw[dashed, very thick,color=purple] (1.5-.33,1.5+.33)--(1.5+.33,1.5-.33);
     \draw[dashed, very thick,color=purple] (-.5-.33,3.5+.33)--(-.5+.33,3.5-.33);
     \draw[dashed, very thick,color=blue] (-1.5-.33,1.5-.33)--(-1.5+.33,1.5+.33);
     \draw[dashed, very thick,color=blue] (-.5-.33,2.5-.33)--(-.5+.33,2.5+.33);
 
    \draw [color=red, rounded corners, thick] (1,1-.25) -- (2+.25,2) -- (2,2+.25) -- (1-.25,1) -- cycle;
    \draw [color=red, rounded corners, thick] (-1+.25,1) -- (-2,2+.25) -- (-2-.25,2) -- (-1,1-.25) -- cycle;
    \draw [color=red, rounded corners, thick] (0+.25,2) -- (-1+.25,3) --(0+.25,4)--(0,4+.25)-- (-1-.25,3) -- (0,2-.25) -- cycle;
 \end{tikzpicture}
   
\caption
{Example \#5.  The defined relations are given in the text.
Red curves indicate the distributive congruence $\delta$, which is then refined by first splitting 
across the purple dashed lines ($\beta_1$), then across the blue dashed lines, yielding $\beta_2$.}
\label{fig:ex5}
\end{center}
\end{figure}

As a nice extension of Example~\#1, we have this application of the algorithm.

\begin{cor}\label{cor:onlijoin}
Let $\psi(\mbf x)$ be a consistent sentence
\[   \exists x_1 \cdots \exists x_m \quad s_1 \leq t_1 \ \&\  \dots \ \&\  s_k \leq t_k \ \&\  u_1 \nleq v_1 \ \&\  \dots u_\ell \nleq v_\ell  \]
where $s_i$, $t_i$, $u_i$, $v_i$ are lattice terms in $\mbf x = (x_1, ..., x_m)$.
Suppose that each of the terms in $\psi(\mbf x)$ involves only variables and the join operation. 
Then $\psi(\mbf x)$ occurs in a finite distributive lattice, and hence also in a free lattice.     
\end{cor}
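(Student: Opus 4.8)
The plan is to show that for a join-only presentation the \emph{distributive reflection} already models everything, so that the algorithm of Section~\ref{subsec:part2} returns \textsc{yes} at its very first stage (exactly as in Example~\#1), yielding a distributive model; the free model then follows from the reduction theorem.

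First I would record the shape of the data. Since every term involves only variables and joins, each of $s_i,t_i,u_i,v_i$ is the formal join of a subset of $X$; write the positive relations as $\Join A \le \Join B$ and the negative ones as $\Join A \nleq \Join B$, with $A,B \subseteq X$. Running Skolem's algorithm (Theorem~\ref{skolem}) produces the finite $\mathrm{PDL}$ $\mbf S$, in which only joins are defined. \emph{Consistency} means precisely that for each pair in $R_-$ we have $\Join A \nleq \Join B$ in $\mbf S$; since $\overline{\Join A}$ is the least upper bound of $A$, this forces some variable $a\in A$ with $a \nleq \Join B$ in $\mbf S$.

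The key step is to manufacture enough maps $\mbf S \to \mbf 2$ to witness the negations. For a fixed negation set $p = \Join B \in \mbf S$ and let $I=\,\downarrow\! p$ be the principal ideal it generates. Because $\mbf S$ carries \emph{only} join operations, the complement $\mbf S \setminus I$ is join-prime: if $\Join\{s,t\}=w$ is defined then $w\le p$ iff both $s\le p$ and $t\le p$, so the indicator of $\mbf S\setminus I$ is a $\mathrm{PDL}$ homomorphism $r:\mbf S\to\mbf 2$. By construction $r(p)=0$, while $r(a)=1$ for the witness $a$ (as $a\nleq p$), whence $r(\Join A)\ge r(a)=1>0=r(\Join B)$. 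Taking the product of one such map per negation gives a $\mathrm{PDL}$ homomorphism $R:\mbf S\to\mbf 2^{\ell}$ that preserves every relation of $R_+$ (each coordinate is a homomorphism, hence order-preserving) and respects every relation of $R_-$. Thus $\mbf S$ occurs in the finite distributive lattice $\mbf 2^{\ell}$, proving the first assertion. Since each such $r$ factors through the distributive reflection, $\mbf S/\delta$ already models $\la \mbf S, R_-\ra$, so $\delta=\beta$ and no interval doubling is required.

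For the free lattice I would invoke the reduction theorem. A finite distributive lattice is bounded, so $\la\mbf S,R_-\ra$ occurs in a finite bounded lattice; after standardizing the negations (Lemma~\ref{lm:std}; in the join-only case this only replaces each $\Join A\nleq \Join B$ by a single-variable non-inclusion $a\nleq\Join B$, which is \emph{stronger}, so any model of it is still a model of $\psi$) the hypotheses of Theorem~\ref{thm:reduction} are met, and the sentence occurs in a free lattice. The main obstacle is the join-primeness used in the key step: it is exactly the presence of a meet that destroys it, as Examples~\#2 and~\#4 illustrate, where the class containing a join cannot be split from an element lying below it. A secondary subtlety is that one cannot simply embed the distributive model $\mbf 2^{\ell}$ into $\mbf F(Y)$ — a finite distributive lattice need not even satisfy $(\mathrm W)$ — so the passage to the free lattice must go through the bounded-lattice reduction (or, directly, through interpreting $x_i$ as the join of the generators indexed by its nonzero coordinates, where one must handle separately any variable sent to $0$).
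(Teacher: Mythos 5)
Your proposal is correct and takes essentially the same route as the paper's own proof: both arguments exploit that in the join-only PDL every down-set $\downarrow\! p$ is a prime ideal whose complement is a prime filter, so the induced PDL homomorphisms onto $\mbf 2$ realize $\la \mbf S, R_-\ra$ in a finite distributive lattice already at the distributive stage of the algorithm (consistency supplying the witnesses $u_j \nleq v_j$, and standardization being harmless), after which Theorem~\ref{thm:reduction} yields occurrence in a free lattice. The only cosmetic difference is that you take one coordinate per negation, landing in $\mbf 2^{\ell}$, whereas the paper intersects all prime-ideal/prime-filter splits to conclude that $\delta$ is the identity on $\mbf S$, so that $\mbf S/\delta \cong \mbf S$ itself is the distributive model.
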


Of course the same conclusion holds for sentences involving only meets.

\begin{proof}
Consistency is always required for Skolem's algorithm to produce a PDL $\mbf S$.
Standardization with these hypotheses involves only replacing $\Join p_i \nleq q$ with a disjunction 
$p_1 \nleq q \,\op{OR}\, \cdots \,\op{OR}\,  p_\ell \nleq q$; see Definition~\ref{def_standard}.
Consistency (within lattice theory) just means that $p_j \nleq q$ in $\mbf S$ for at least one $j$. 
So we are applying our algorithm above to a PDL that has only joins defined, i.e., no nontrivial meets.
In that case, for each $s \in S$, $\downarrow\! s$ is a prime ideal and $S \,\setminus \downarrow\! s$ is a prime filter.
Thus the intersection $\delta$ is the identity relation on $\mbf S$.
Moreover, $u_j \nleq v_j$ for each pair in $R_-$ by Skolem's algorithm and consistency.
Hence $\mbf S/\delta \cong \mbf S$ models $\la \mbf S,R_- \ra$.
The algorithm returns YES for $\psi(\mbf x)$.
\end{proof}

\section{Non-elementary projective lattices}

    In this section we prove Theorem~\ref{intermediate_theorem}. Let $n \in \mathbb{N}$, a lattice $\mathbf{L}$ is said to have \textit{breadth at most $n$} if whenever $a \in L$ and $S$ is a finite subset of $L$ such that $a = \bigvee S$, there is a subset $T$ of $S$, with $a = \bigvee T$ and $|T| \leq n$. The breadth of a lattice is the least $n \in \mathbb{N}$ such that it has breadth at most $n$ (if such an $n$ exists and infinite otherwise). The reader can verify that this concept is self dual. It is easy to see that having breadth $\leq n$ is expressible by a universal sentence, and so having breadth $> n$ is expressible by an existential one.

    \begin{fact}[{\cite[Theorem~1.30]{FJN}}]\label{the_fact_for_breadth} 
Let $\mathbf{L}$ be a lattice satisfying (W). Suppose elements $a_1$, $a_2$, $a_3$, and $v \in L$ satisfy:
\begin{enumerate}[(1)]
    \item $a_i \not\leq a_j \vee a_k \vee v$ whenever $\{i,j,k\} = \{1,2,3\}$;
    \item $v \not\leq a_i$ for $i = 1, 2, 3$;
    \item $v$ is meet irreducible.
\end{enumerate}
Then $\mathbf{L}$ contains a sublattice isomorphic to $\mathbf{FL}_3$.
\end{fact}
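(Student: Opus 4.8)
The plan is to exhibit an explicit free generating triple and verify its freeness by matching Whitman's recursive description of the order in $\mathbf{FL}_3$ against the condition $(\op W)$ in $\mathbf L$. Concretely, I would set $b_i := a_i \vee v$ for $i = 1,2,3$ and consider the canonical homomorphism $\phi : \mathbf{FL}_3 \to \mathbf L$ sending the free generators $x_1,x_2,x_3$ to $b_1,b_2,b_3$. Since $\mathbf{FL}_3$ is free, $\phi$ exists and its image is the sublattice $\langle b_1,b_2,b_3\rangle$; the entire task is to show $\phi$ is injective, i.e.\ that $\phi(s) \leq \phi(t)$ in $\mathbf L$ forces $s \leq t$ in $\mathbf{FL}_3$ for all lattice terms $s,t$.

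The decisive structural observation is that, because $v \leq b_i$ for every $i$, every element of $\langle b_1,b_2,b_3\rangle$ lies above $v$, and in fact \emph{strictly} above $v$. Indeed $b_i > v$: we have $a_i \not\leq v$ by hypothesis (1), since $a_i \leq v$ would give $a_i \leq a_j \vee a_k \vee v$; and if $x,y > v$ then $x \wedge y \geq v$ with equality impossible, as $v = x \wedge y$ with $x,y \neq v$ would contradict the meet irreducibility of $v$ in hypothesis~(3). Closure of the sublattice under $\vee$ keeps elements above $v$ trivially, and closure under $\wedge$ keeps them strictly above $v$ by the same meet-irreducibility argument. Consequently the degenerate branch ``$\phi(s) \leq v$'' that $(\op W)$ can produce never actually occurs inside this sublattice.

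Next I would check the first-level independence relations. Join independence $b_i \not\leq b_j \vee b_k$ is immediate from (1), because $b_j \vee b_k = a_j \vee a_k \vee v$ and $b_i \leq b_j \vee b_k$ would give $a_i \leq a_j \vee a_k \vee v$. For meet independence $b_j \wedge b_k \not\leq b_i$, suppose the contrary and apply $(\op W)$ to $b_j \wedge b_k \leq a_i \vee v$: the branch $b_j \wedge b_k \leq v$ is excluded by the previous paragraph; the branches $b_j \leq a_i \vee v$ and $b_k \leq a_i \vee v$ are excluded by (1) (they would give $a_j \leq a_i \vee v$, resp.\ $a_k \leq a_i \vee v$); and the branch $b_j \wedge b_k \leq a_i$ is excluded by (2), since $v \leq b_j \wedge b_k \leq a_i$ would contradict $v \not\leq a_i$. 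Thus $b_1,b_2,b_3$ form an antichain admitting no nontrivial join- or meet-cover among themselves.

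Finally, these first-level relations are exactly the hypotheses under which the standard word-problem argument for lattices satisfying $(\op W)$ (Whitman's recursion, as in Chapter~1 of \cite{FJN}) shows that $\{b_1,b_2,b_3\}$ freely generates $\mathbf{FL}_3$: one tests $\phi(s)\leq\phi(t)$ by induction on the complexity of $s$ and $t$, at each step invoking $(\op W)$ in $\mathbf L$ and matching the resulting subterm comparisons with the recursive description of $\leq$ in $\mathbf{FL}_3$. The main obstacle, and the technical heart of the proof, is precisely this \emph{propagation}: ruling out any accidental collapse $\phi(s) = \phi(t)$ at a deeper term level. The mechanism that makes it work is uniform and mirrors the first-level computation above: at every application of $(\op W)$ the ``$\leq v$'' alternative is killed by the meet irreducibility of $v$, while the remaining alternatives are killed by (1) and (2). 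Carrying out this induction cleanly, rather than dispatching the (easy) base cases, is where the real effort lies.
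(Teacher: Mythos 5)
Your proposal is correct and takes essentially the same route as the proof in the paper's cited source (\cite[Theorem~1.30]{FJN}): set $b_i = a_i \vee v$, use the meet irreducibility of $v$ to kill the ``$\leq v$'' branch of $(\op W)$, and verify that $\{b_1, b_2, b_3\}$ is independent. One remark: the deeper term-by-term induction you flag at the end as ``the real effort'' is exactly the standard theorem that an independent set in a lattice satisfying $(\op W)$ generates a free sublattice (\cite[Corollary~1.12]{FJN}, invoked elsewhere in this paper), so once your independence verification is done the proof is complete and no further propagation argument is needed.
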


    \begin{proof}[Proof of Theorem~\ref{intermediate_theorem}] Suppose $\mathbf{L}$ satisfies  (W). Suppose now that $\mathbf{L}$ has breadth $\geq 5$, then we can find $a, a_1, ..., a_5 \in \mathbf{L}$ such that $a = a_1 \vee \cdots \vee a_5$ holds and that this join is irredundant. Then, since every element of a lattice which satisfies (W) must be either join or meet irreducible, $v = a_4 \vee a_5$ is meet irreducible. Then by Fact~\ref{the_fact_for_breadth} $\mathbf{L}$ has a sublattice isomorphic to $\mathbf{FL}_3$.
    \end{proof}

    \begin{proof}[Proof of Corollary~\ref{intermediate_theorem_cor}] If $\mathbf{L}$ is infinite and projective, then it is a sublattice of a free lattice and it satisfies $(W)$.  Hence by \ref{intermediate_theorem} we are done, as $L$ is bi-embeddable with an infinite free lattice.
    \end{proof}



\section{The positive theory of free lattices}

In the paper \cite{NaPa14} we showed that the positive theory of the free lattice $\mbf F_3$ properly contains the positive theory of $\mbf F_4$.
This note intends to do the same for $\mbf F_n$ vs.~ $\mbf F_{n+1}$ for all finite $n \geq 3$.
%
Let $\op{PTh}(\mbf L)$ denote the positive first-order theory of $\mbf L$.
Since $\mbf F_n$ is a homomorphic image of $\mbf F_{n+1}$, we have $\op{PTh}(\mbf F_n)  \supseteq \op{PTh}(\mbf F_{n+1})$. 
To distinguish them, we seek a positive sentence $\pi^n(\mbf{x})$ that holds in $\mbf F_n$ but not in $\mbf F_{n+1}$, thus:
$$\op{PTh}(\mbf F_1)  \supsetneq \op{PTh}(\mbf F_{2}) \supsetneq \cdots \supsetneq \op{PTh}(\mbf F_{n}) \supsetneq \op{PTh}(\mbf F_{n+1}) \supsetneq \cdots$$

In this section and this section only we use the $\{ +, \cdot \}$ notation rather than the $\{ \vee, \wedge \}$ notation as this notation is preferable for some longer expressions of lattice terms appearing below.

\subsection{Preliminaries}
The cases $n=1$ and $n=2$ are easy:
\begin{align*}
    \pi^1 = \qquad &\forall x \forall y \  x \approx y \\
    \pi^2 = \qquad &\forall x \forall y \ [ x \approx y  
    \ \text{OR}\ \forall z \ (z \leq x+y)  
    \ \text{OR}\ \forall z \ (z \geq x\cdot y) ]
\end{align*}
are sentences with the property that $\pi^n$ holds in $\mbf F_n$ but not $\mbf F_{n+1}$ for $n=1,2$.  So henceforth we assume $n \geq 3$.

As part of his solution of the word problem \cite{PMW1941}, Whitman showed that free lattices satisfy the following condition:
\[ \mathrm{(W)} \quad s \meet t \leq u \join v \text{ implies } s \leq u \join v \text{ or } t \leq u \join v \text{ or }s \meet t \leq u \text{ or } s \meet t \leq v .    \]

A free lattice $\mbf F_n$ has a unique set of generators, say $X = \{ x_1, \dots, x_n \}$.
These generators are both join and meet prime, which means:
\begin{itemize}
\item if $x_j \leq \sum U$ for some finite $U \subseteq F_n$, then $x_j \leq u$ for some $u \in U$;
\item if $x_j \geq \prod V$ for some finite $V \subseteq F_n$, then $x_j \geq v$ for some $v \in V$.
\end{itemize}
In particular, the set $X$ of free lattice generators is \emph{independent}:  $x_i \nleq \sum_{j \neq i} x_j$ and
$x_i \not\geq \prod_{j \neq i} x_j$.
In a lattice satisfying Whitman's condition $\mathrm{(W)}$, any independent set $Y$ generates
a sublattice isomorphic to $\mbf F(Y)$, see Section~1.2 of \cite{FJN} for a full exposition of these ideas. In what follows recall the notions introduced in \ref{bounded_sec}.



Every element in a finite distributive lattice $\mbf D$ can be written uniquely
as a join of join irreducible elements, and likewise as a meet of meet irreducible elements.
The former is often known as the \emph{join normal form} and
the latter as the \emph{meet normal form}.  We are particular interested
in the case when $\mbf D$ is the free distributive lattice $\mbf{FD}_n$.

Accordingly, let $Z = \{ z_1, \dots, z_n \} = \mbf{z}$ and let 
$f_n : \mbf F(Z) \to \mbf{FD}(Z)$
be the standard homomorphism with $f_n(z_i) = z_i$ for $i \in [1,n]$.  
Now $\mbf{FD}_n$ is a (finite) bounded lattice. Let $\mbf F(Z) = \mbf{F}_n$ and $\mbf{FD}_n = \mbf{FD}(Z)$.
Thus $f_n$ is a bounded homomorphism, i.e., the $\ker f_n$-class
that is the preimage of an element $d \in \mbf{FD}_n$ 
is a bounded interval $[ \nu_d(\mbf z), \mu_d(\mbf z) ]$ in $\mbf F_n$.  That is, for $w = w(\mbf z) \in \mbf F_n$ we have:
\[ f_n(w)=d \quad\text{iff}\quad  \nu_d(\mbf z) \leq w \leq \mu_d(\mbf z) .\]
Moreover, $\nu_d(\mbf z)$ is a join of meets of variables, and dually $\mu_d(\mbf z)$ is a meet of joins of variables.  
Not surprisingly, $\nu_d(\mbf z)$ and $\mu_d(\mbf z)$ are the lattice terms in $\mbf F(Z)$ corresponding to the join normal form and meet normal form of $d$ in $\mbf {FD}(Z)$, respectively. Notice that $\mbf {FD}(Z) \models \nu_d(\mbf z) = \mu_d(\mbf z)$ but in general $\mbf F(Z) \models \nu_d(\mbf z) \neq \mu_d(\mbf z)$, unless $d$ is either $0$, $1$, an atom, a coatom, or a generator; thus, in these cases $d$ has a unique preimage under $f_n$. For example, let $d$ be the upper cover of $z_1$ in $\mbf{FD}_3$. 
Then $\nu_d(\mbf z) = z_1 + z_2z_3$ and $\mu_d(\mbf z) = (z_1 + z_2)(z_1+z_3)$, which are the join and meet normal forms (respectively) of the upper cover of $z_1$ in $\mbf{FD}_3$, and in fact in this case we have that $\mbf {FD}(Z) \models \nu_d(\mbf z) = \mu_d(\mbf z)$ but $\mbf F(Z) \models \nu_d(\mbf z) \neq \mu_d(\mbf z)$.
That the bounds of the $f_n^{-1}$-classes are join and meet normal forms reflects the fact that join irreducible elements are join prime in distributive lattices, and, dually, meet irreducibles are meet prime.  
When $\mbf L$ is a non-distributive lattice and $g: \mbf F \to \mbf L$ is a bounded homomorphism, the algorithm for finding the least element of $\ker g$-classes (Theorem~2.4 of \cite{FJN}) includes terms for nontrivial join covers of join irreducible elements, $a \leq \sum B$ in $\mbf L$ with $a \nleq b$ for all $b \in B$.  There are no such nontrivial inclusions in $\mbf {FD}(Z)$, and consequently no such terms in $\nu_d(\mbf z)$, the least element of $f_n^{-1}(d)$ when $d$ is join irreducible.  Moreover, general considerations show that the least elements of congruence classes satisfy $\nu_{\sum a_i}(\mbf z) = \sum \nu_{a_i}(\mbf z)$, so when $\mbf L$ is finite it suffices to compute $\nu_a(\mbf z)$ for $a$ join irreducible.  The dual statements hold for the largest elements $\mu_a(\mbf z)$ of congruence classes.



\subsection{Some auxiliary formulas}
The negation of independence is a positive property.
Likewise, being the greatest or least element of a lattice are positive properties.
Thus we have the following positive first-order formulas in the language of lattice theory:
\[  \op{NI}(x_1, \dots, x_n): \quad (\op{OR}_{1 \leq i \leq n}  x_i \leq \sum_{j \ne i} x_j) \quad \op{OR} \quad 
             ( \op{OR}_{1 \leq i \leq n}  x_i \geq \prod_{j \ne i} x_j);   \]
\[  t(u): \quad \forall w \ w \leq u; \]
\[  b(u): \quad \forall w \ w \geq u. \]
We need one more type of formula.
Let $\mbf x = (x_1, \dots, x_{n+1})$ and $\mbf z = (z_1, \dots, z_n)$.  
Let $E = (I_1, \dots, I_{n+1})$ be a vector of bounded intervals, say $I_j = [b_j,a_j]$, where each $b_j(\mbf z)$ and $a_j(\mbf z)$ is given by a lattice term in the variables $\mbf z$.
Because the intervals are bounded, the condition
$x \in I = [b,a]$ can be written out as $b \leq x \ \&\  x \leq a$.  The condition that $x_j \in I_j$ for $1 \leq j \leq n+1$ can thus be written as a positive first-order predicate:
\[ \op{Loc}(\mbf x,E,\mbf z): \quad \&_{1 \leq j \leq n+1}\  
                 [ b_j(\mbf z) \leq x_j \leq a_j(\mbf z) ].\]
These four kinds of predicate are combined in the sentence $\pi^n$, given just before Lemma~\ref{lm:hold4}.  
The formula $\op{NI}(\mbf x)$ is a positive way of expressing that these elements are \emph{not independent}; $t(u)$ means that $u$ is the top element; $b(u)$ means that $u$ is the bottom element.
For a vector $E$ of intervals, $\op{Loc}(\mbf x,E,\mbf z)$ says that each $x_j$ is \emph{located} in the interval $I_j$.  
The task between here and Lemma~\ref{lm:hold4} is to explain how to choose properly the set $\msc E$ of vectors of intervals to be used in $\pi^n$.

In the presence of Whitman's condition $\mathrm{(W)}$,  $\op{NI}(x_1, \dots, x_k)$ says exactly that those elements
do not generate a copy of $\mbf F_k$ by \cite[Corollary~1.12]{FJN}.
On the other hand,
Whitman showed that $\mbf F_3$ contains $\mbf F_\omega$ \cite{PMW1941}, see
\cite[Theorems~1.28 and~9.10]{FJN}.
Indeed, by Tschantz's Theorem (\cite[Theorem~9.10]{FJN}), every infinite interval in $\mbf F_n$ contains a copy of $\mbf F_\omega$.
Thus $\mbf F_n$ contains many copies of $\mbf F_{n+1}$, and the predicate $\op{Loc}(\mbf x,E,\mbf z)$
will restrict the location of copies of $\mbf F_{n+1}$.
By way of comparison, $\mbf F_n$ contains only finitely many finite intervals except for 2- and 3-element chains associated with completely join irreducible elements; see Theorem~7.5 
of \cite{FJN}.  The exceptions are in the connected components of 0 and 1; see Case~3 in the description of 
$\ker h_n$-classes below.  The location of infinite intervals motivated our use of $\mbf A_n$ rather than $\mbf{FD}(n)$.

\subsection{The lattice $\mbf A_n$}
We will take $\mbf F_n$, $\mbf{FD}_n$, and $\mbf A_n$ to be generated by the set $Z = \{ z_1, \dots, z_n \}$.  
To emphasize this we sometimes write $\mbf F_n = \mbf F_n(Z)$.

The lattice $\mbf A_n$ is obtained by taking the free distributive lattice $\mbf{FD}_n$ and doubling the elements that
are the join of two atoms, or the meet of two coatoms.  
The lattice $\mbf A_3$ is drawn in Figure~\ref{fig:doubled}.
Similarly, $\mbf A_4$ would have $166+12=178$ elements\footnote{Finitely generated free distributive lattices are finite, but quite large.
The cardinality $|\mbf{FD}_n|$ is known only for $n \leq 9$, with the case $n=9$ being
found only in 2023.  See the entry for \emph{Dedekind numbers} in OEIS (https://oeis.org/A000372);
the numbers given there are $|\mbf{FD}_n|+2$ since they are for the free algebra in the variety
of distributive lattices with $0$ and $1$ as constants.  The simplest asymptotic formula is due 
to D. J. Kleitman (Proc. Amer. Math. Soc. {\bf 21} (1969),  677--682):
\[ \log_2 |\mbf{FD}_n| \sim \binom n {\lfloor n/2 \rfloor}.  \]
}.

The free distributive lattice $\mbf{FD}_n$ is a finite bounded lattice.
Also, Alan Day's doubling construction preserves the property of being bounded \cite{RAD77, RAD79}, \cite [Sec.~II.3]{FJN}.
Thus the natural homomorphism $h_n : \mbf F_n \to \mbf A_n$ is a bounded homomorphism. The algorithm for computing lower and upper bounds 
for congruence classes of the kernel of a bounded homomorphism, which goes back to J\'onsson \cite{JN77} and 
McKenzie \cite{McK1972}, is given in Theorem 2.3 of \cite{FJN}.
Applying this to the homomorphism $h_n$ decomposes $\mbf F_n$ into a disjoint union of
the finitely many congruence classes of $\ker h_n$.  
The $\ker h_n$ classes will be intervals $[\beta_a(\mbf z),\alpha_a(\mbf z)]$ in $\mbf F_n$, for each $a \in A_n$,
with the property that for $w = w(\mbf z) \in \mbf F_n$:
\[ h_n(w)=a \quad\text{iff}\quad  \beta_a(\mbf z) \leq w \leq \alpha_a(\mbf z) .\]
Indeed $h_n(w) \geq a$ iff $w \geq \beta_a(\mbf z)$, and dually.
We will describe those intervals momentarily.

\begin{figure}
\begin{center}
\begin{tikzpicture}[scale=1]
\tikzstyle{every node}=[draw,circle,fill=white,minimum size=5pt,inner sep=0pt,label distance=1mm] 
   \node (0) at (0,0)[]{};
   \node (1) at (-1,1)[]{};
   \node (2) at (0,1)[]{};
   \node (3) at (1,1)[]{};
   \node (4) at (-1,2)[]{};
   \node (5) at (0,2)[]{};
   \node (6) at (1,2)[]{};
   \node (7) at (-1,3)[]{};
   \node (8) at (0,3)[]{};
   \node (9) at (1,3)[]{};
   \node (10) at (0,4)[]{};
   \node (11) at (-1,1+4)[]{};
   \node (12) at (0,1+4)[]{};
   \node (13) at (1,1+4)[]{};
   \node (14) at (-1,2+4)[]{};
   \node (15) at (0,2+4)[]{};
   \node (16) at (1,2+4)[]{};
   \node (17) at (-1,3+4)[]{};
   \node (18) at (0,3+4)[]{};
   \node (19) at (1,3+4)[]{};
   \node (20) at (0,4+4)[]{};
   \node (21) at (-2,4)[label=left:$z_1$]{};
   \node (22) at (1,4)[label=right:$z_3$]{};
   \node (23) at (2,4)[label=right:$z_2$]{};
        
   \draw (10)--(9)--(6)--(3)--(0)--(1)--(4)--(7)--(10)--(8)--(5);
   \draw (0)--(2)--(4);
   \draw (2)--(6);
   \draw (1)--(5)--(3);
   \draw (10)--(11)--(14)--(17)--(20)--(19)--(16)--(13)--(10)--(12)--(15);
   \draw (20)--(18)--(16);
   \draw (18)--(14);
   \draw (19)--(15)--(17);
   \draw (7)--(21)--(11);
   \draw (8)--(22)--(12);
   \draw (9)--(23)--(13);
\end{tikzpicture}
\end{center}
\caption
{Lattice $\mbf A_3$ obtained by doubling six elements in $\mbf{FD}_3$.} \label{fig:doubled} 
\end{figure}

The $\ker h_n$-classes come in three forms.
It is useful to describe these in terms of the maps
\begin{align*}
    h_n &: \mbf F_n \to \mbf A_n  \\
    g_n &: \mbf A_n \to \mbf{FD}_n \\
    f_n = g_nh_n &: \mbf F_n \to \mbf{FD}_n 
\end{align*}
where $g_n$ is the canonical homomorphism collapsing every
doubled point back to a singleton.
All these maps have $z_i \mapsto z_i$ for $i \in [1,n]$.
Recall that $\nu_d(\mbf z)$ and $\mu_d(\mbf z)$ denote the join normal form and meet normal form, respectively, of an element $d \in \mbf{FD}(Z)$, recall also that $\nu_d(\mbf z)$ and $\mu_d(\mbf z)$ are the least and greatest element of $\ker f_n$, for 
$f_n : \mbf F(Z) \to \mbf{FD}(Z)$ the standard homomorphism with $f_n(z_i) = z_i$ for $i \in [1,n]$, where $Z = \{ z_1, \dots, z_n \}$.  

\begin{lm} \label{lm:3cases}  
Let $a \in \mbf A_n$ and let $h_n^{-1}(a)$ be the interval $[\beta_a(\mbf z), \alpha_a(\mbf z)]$ in~$\mbf F_n$.
\begin{enumerate}
    \item[(1)] If\/ $a$ is not a join of two atoms of $\mbf A_n$, nor the upper cover of the join of two atoms, nor the meet of two coatoms, nor the lower cover of the meet of two coatoms,
    then $\beta_a(\mbf z) = \nu_{g_n(a)}(\mbf z)$ and $\alpha_a(\mbf z) = \mu_{g_n(a)}(\mbf z)$.
    \item[(2a)] If\/ $a$ is a join of two atoms of $\mbf A_n$,
     then $\beta_a(\mbf z) = \alpha_a(\mbf z) = \nu_{g_n(a)}(\mbf z)$.
    \item[(2b)] If\/ $a$ is the meet of two atoms of $\mbf A_n$,
     then $\beta_a(\mbf z) = \alpha_a(\mbf z) = \mu_{g_n(a)}(\mbf z)$.
     \item[(3a)]  If\/ $a$ is the upper cover in $\mbf A_n$ of the join of two atoms, then $\beta_a(\mbf z)$ is the
     upper cover of the join of the corresponding two atoms of\/ $\mbf F_n$,
     and $\alpha_a(\mbf z) = \mu_{g_n(a)}(\mbf z)$.
     \item[(3b)] If\/ $a$ is the lower cover in $\mbf A_n$ of the meet of two coatoms, then $\alpha_a(\mbf z)$ is the
     lower cover of the meet of the corresponding two coatoms of\/ $\mbf F_n$,
     and $\beta_a(\mbf z) = \nu_{g_n(a)}(\mbf z)$.    
\end{enumerate}
\end{lm}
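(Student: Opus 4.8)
The plan is to read off all five cases from the J\'onsson--McKenzie algorithm computing the least and greatest elements of the $\ker h_n$-classes (Theorem~2.3 of \cite{FJN} and its dual), exploiting the factorization $f_n = g_n h_n$ together with the explicit way $\mbf A_n$ sits over $\mbf{FD}_n$. First I would dispose of Case~(1). The collapsing map $g_n : \mbf A_n \to \mbf{FD}_n$ is a bijection except on the doubled pairs, so if $a$ is none of the four excluded types then $g_n^{-1}(g_n(a)) = \{a\}$, and hence
\[ h_n^{-1}(a) = (g_n h_n)^{-1}(g_n(a)) = f_n^{-1}(g_n(a)) = [\,\nu_{g_n(a)}(\mbf z),\ \mu_{g_n(a)}(\mbf z)\,]. \]
This is exactly $\beta_a = \nu_{g_n(a)}$ and $\alpha_a = \mu_{g_n(a)}$, so Case~(1) is immediate.

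Next I would analyze the local structure at a doubled join of two atoms $p = a_1 \join a_2$, which the doubling splits into $p_0 \prec p_1$: here $p_0 = a_1 \join a_2$ is a proper join with unique upper cover $p_1$ (so $p_0$ is meet irreducible), while $p_1$ is a new join irreducible whose order structure above it agrees with that of $p$ in the distributive quotient. The $\ker f_n$-class $f_n^{-1}(p) = [\nu_p,\mu_p]$ is then the disjoint union of the two intervals $h_n^{-1}(p_0)$ and $h_n^{-1}(p_1)$. Two of the four bounds follow at once. Since $p_0$ is a proper join and the atoms $a_1,a_2$ fall under Case~(1), additivity of $\beta$ over joins gives
\[ \beta_{p_0} = \beta_{a_1} \join \beta_{a_2} = \nu_{g_n(a_1)} \join \nu_{g_n(a_2)} = \nu_{g_n(p_0)}, \]
and dually, because the structure above $p_1$ is distributive (so meet irreducibles are meet prime and carry no nontrivial meet covers), the upper-bound algorithm returns $\alpha_{p_1} = \mu_{g_n(p_1)}$.

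The crux is then to show $\alpha_{p_0} = \beta_{p_0} = \nu_p$, so that $h_n^{-1}(p_0) = \{\nu_p\}$ (Case~(2a)), and that $\beta_{p_1}$ is the upper cover of $\nu_p = \beta_{a_1}\join\beta_{a_2}$ (Case~(3a)). A clean first observation is that $p_1$ cannot be join prime: were it so, its least preimage would be computed distributively, giving $\beta_{p_1} = \nu_{g_n(p_1)} = \nu_p = \beta_{p_0}$, which is impossible since the two $\ker h_n$-classes are disjoint intervals. Hence the doubling has created a minimal nontrivial join cover of $p_1$, and feeding it into the least-preimage recursion expresses $\beta_{p_1}$ as a cover of $\nu_p$.

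The main obstacle will be to verify that this is genuinely a covering in $\mbf F_n$ and that nothing strictly above $\nu_p$ still maps to $p_0$. Here I would invoke Day's weak atomicity of finitely generated free lattices together with the description of covers of a join-reducible element, to show that the prime quotient $p_0 \prec p_1$ pulls back through the bounded homomorphism $h_n$ to a single covering $\nu_p \prec \beta_{p_1}$ partitioning $[\nu_p,\mu_p]$ into $\{\nu_p\}$ and $[\beta_{p_1},\mu_p]$; this simultaneously yields $\alpha_{p_0} = \nu_p$ and identifies $\beta_{p_1}$ with the unique upper cover of $\nu_p$. Cases~(2b) and~(3b) then follow by the order dual, doubling meets of two coatoms.
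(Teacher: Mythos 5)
Your Case (1) and the overall skeleton match the paper: the paper likewise observes that for non-doubled $a$ one has $h_n^{-1}(a)=f_n^{-1}(g_n(a))$, treats (2a)/(3a) together via the doubled pair $p_0\prec p_1$ sitting over the join of two atoms, obtains $\beta_{p_0}=\nu_p$ from join-additivity of least preimages and $\alpha_{p_1}=\mu_{g_n(p_1)}$ by meeting the (Case-1) meet primes above $p_1$, and dualizes for (2b)/(3b). However, the crux of (2a)/(3a) --- that $h_n^{-1}(p_0)$ is the singleton $\{\nu_p\}$ and that $\beta_{p_1}$ is the upper cover of $\nu_p$ --- is exactly where your proposal has a genuine gap. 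You defer it to ``Day's weak atomicity together with the description of covers of a join-reducible element,'' but weak atomicity only guarantees that every quotient contains \emph{some} prime quotient; it gives neither uniqueness of the cover above $\nu_p$ nor the strong-cover property (every element strictly above $\nu_p$ lies above the cover) needed to force $\alpha_{p_0}=\nu_p$. The partition of $[\nu_p,\mu_p]$ into the two intervals $[\beta_{p_0},\alpha_{p_0}]\,\dot\cup\,[\beta_{p_1},\alpha_{p_1}]$ is automatic from $h_n$ being bounded; the entire content of the case is that the lower interval degenerates, and no tool you name delivers that.

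What the paper actually uses is a concrete structural fact about the bottom of $\mbf F_n$ (Example~3.45 / Figure~3.5 of \cite{FJN}): with $\ul z_i = \prod_{\ell\ne i} z_\ell$, one has $0 \prec \ul z_i \prec \ul z_i + \ul z_j \prec \prod_{k\ne i,j}(\ul z_i+\ul z_j+\ul z_k)$, the last element being the unique \emph{strong} upper cover $c$ of the join of two atoms; one then checks that $h_n(c)=p_1\ne p_0$, so every $w>\nu_p$ satisfies $h_n(w)\geq p_1$ and the $p_0$-class is a singleton. For $\beta_{p_1}$ the paper runs the J\'onsson--McKenzie recursion explicitly: $\beta_0(p_1)=\prod_{k\ne i,j} z_k$, then $\beta_1(p_1)=\prod_{k\ne i,j}(\ul z_i+\ul z_j+\ul z_k)$ via the minimal nontrivial join covers of $p_1$ by atoms, the recursion terminating at step~1 because $\mbf A_n$ has $\op D$-rank~1; this identifies $\beta_{p_1}$ with $c$. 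Your proposal never makes the identification $h_n(c)=p_1$, and your side argument that $p_1$ is not join prime is also off: were $p_1$ join prime, the recursion would give $\beta_{p_1}=\beta_0(p_1)=\prod_{k\ne i,j}z_k$, a meet of generators, not $\nu_{g_n(p_1)}$ as you claim --- the conclusion is true (its minimal nontrivial join covers consist of atoms, as the paper's $\mbf A_4$ computation shows), but not for the reason you give.
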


\smallskip
\begin{proof}

\noindent \underline{Case 1}: 
Assume $a$ is not a join of two atoms of $\mbf A_n$, nor the upper cover of the join of two atoms, nor the meet of two coatoms, nor the lower cover of the meet of two coatoms.
Since in this case $a$ is not one of the doubled elements,
$h_n^{-1}(a) = f_n^{-1}(g_n(a))$ as sets, i.e.,
$h_n(w)=a$ iff $f(w)=g_n(a)$.
(Left to right always holds, right to left only for non-doubled elements.)    
Thus for these elements the bounds on $h_n$ are the bounds on the standard homomorphism $f_n: \mbf F_n \to \mbf{FD}_n$. 
Choose $w \in \mbf F_n$ such that $h_n(w)=a$.
The lower bound $\beta_a(\mbf z)$ is the join normal form of $f_n(w) \in \mbf{FD}_n$, and the upper bound $\alpha_a(\mbf z)$ is the meet normal form of $f_n(w)$. 
For example, let $a$ be the upper cover of $z_1$ in Figure~\ref{fig:doubled}.  
Then, in $\mbf F_3$,  $\beta_a(\mbf z) = z_1 + z_2z_3$ and $\alpha_a(\mbf z) = (z_1 + z_2)(z_1+z_3)$,
which are the join and meet normal forms (respectively) of the upper cover of $z_1$ in $\mbf{FD}_3$.

\noindent \underline{Case 2}: 
If $a$ is the join of two distinct atoms of $\mbf A_n$, say $a=b+c$, then $h_n^{-1}(a)$ is the join of the corresponding two atoms of $\mbf F_n$, and again we get $\beta_a(\mbf z)=\alpha_a(\mbf z)$.
That is because the join of two atoms in $\mbf F_n$ has a unique (strong) upper cover, and its image under $h_n$ is a different element of $\mbf A_n$; see Case~3.
For example, if $a = z_1z_2z_3 + z_1z_2z_4$ in $\mbf A_4$, then $\beta_a(\mbf z)=\alpha_a(\mbf z) = z_1z_2z_3 + z_1z_2z_4$ in $\mbf F_4$.
Dually for the meet of two coatoms.

\noindent \underline{Case 3}: 
If $a$ is the upper cover in $\mbf A_n$ of the join of two distinct atoms, say $a = (b+c)^*$, choose $u \in \mbf F_n$ such that $h_n(u)=a$.  Note that $f_n(u)=g_n(a) = g_n(b+c)$ since $g_n$ collapses the interval.  
Then $\beta_a(\mbf z)$ is the upper cover of the join of the corresponding two atoms in $\mbf F_n$,
and $\alpha_a(\mbf z)$ is the meet normal form in $\mbf F_n$ of $g_n(b+c) \in \mbf{FD}_n$. 
The upper cover of a join of two atoms in $\mbf F_n$ is given in Example~3.45 of \cite{FJN}, that is, letting $\ul z_i = \prod_{\ell \ne i} z_\ell$, for $i \ne j$, we have:
\[  \mbf F_n \models 0 \prec \ul z_i \prec \ul z_i + \ul z_j \prec \prod_{k \ne i,j} (\ul z_i + \ul z_j + \ul z_k) \]
so that the last expression gives the upper cover $(\ul z_i + \ul z_j)^*$ of $\ul z_i + \ul z_j$.
See Figure~3.5 of \cite{FJN}.

As an example of Case~3, let $a = (\ul z_4 + \ul z_3)^*$ in $\mbf A_4$.  
With $h=h_4$, we calculate the lower and upper bounds of the $\ker h$ congruence class of $a$ using the prescription of Theorem~2.4 in \cite{FJN}:
\begin{align*}
\beta_0 (\ul z_j) &= \ul z_j  \text{ for } 1 \leq j \leq 4, \\
\beta_0((\ul z_4 + \ul z_3)^*) &= z_1z_2 .
\end{align*}
Now $\ul z_j \in \op D_0(\mbf A_4)$, so $\beta (\ul z_j) = \ul z_j$  for those elements, but $(\ul z_4 + \ul z_3)^*$ has two minimal nontrivial join covers, so we continue:
\begin{align*}
\beta_1((\ul z_4 + \ul z_3)^*) &= z_1z_2  \cdot (\ul z_4 + \ul z_3 + \ul z_2)(\ul z_4 + \ul z_3 + \ul z_1)  \\
                 &=  (\ul z_4 + \ul z_3 + \ul z_2)(\ul z_4 + \ul z_3 + \ul z_1)                                
\end{align*}
since the latter terms are below $z_1$ and $z_2$, respectively.
As $(\ul z_4 + \ul z_3)^*$ is in $\op D_1(\mbf A_4)$,  $\beta_1(a)$ is the least element of its congruence class.
For the largest element $\alpha(a)$ of the congruence class, note that $a$ is the meet of 3 meet prime elements in $\mbf A_4$, whence we can use the simpler computation
\begin{align*}
\alpha((\ul z_4 + \ul z_3)^*) &= \alpha(z_1) \cdot \alpha(z_2) \cdot \alpha(z_3+z_4) \\
                 &= z_1z_2(z_3+z_4)  .                               
\end{align*}
Thus when $a$ is the upper cover of $z_1z_2z_3 + z_1z_2z_4$ in $\mbf A_4$, we get $h_4^{-1}(a)$ to be the following $\mbf F_4$-interval: 
\begin{align*}
  [\beta_a&(\mbf z),\alpha_a(\mbf z)] = \\
 &[ (z_1z_2z_3 +z_1z_2z_4+z_1z_3z_4 )(z_1z_2z_3+z_1z_2z_4+z_2z_3z_4 )  , z_1z_2(z_3+z_4)  ]
 \end{align*} 
 which agrees with the claims of part (3a) of the lemma.
Dual calculations yield the bounds for lower cover of the meet of two coatoms.

\smallskip \noindent 
Because  $\mbf{FD}_n$ and $\mbf A_n$ have $\op D$-rank (and dual $\op D$-rank)  0 and 1 respectively (see \cite{FJN, JN77, NaPa14}),
the algorithms for finding $\beta$ and $\alpha$ terminate with $\beta_1$ and $\alpha_1$, as in the preceding example.
\end{proof}
\medskip

Note for future reference that the standard splittings extend to $\mbf A_n$:
if $Z = Z_1 \dot\cup Z_2$, then $A_n = \uparrow\! (\prod Z_1) \ \dot\cup\, \downarrow\! (\sum Z_2)$.
That is, for all $a \in \mbf A_n$, $a \geq \prod Z_1$ or $a \leq\sum Z_2$, and not both.

\subsection{The set $\msc E$}
Let $\msc E = \{ E^1, \dots, E^N \}$ be the collection of all ${n+1}$-tuples  of the form:
\begin{itemize}
\item $E^\ell = \{ I_1^\ell, \dots, I_{n+1}^\ell \}$;
\item each $I_j^\ell$ is a class of $\ker h_n$;
\item there exists in $\mbf F_n$ an independent set $X^\ell = \{ x_1^\ell, \dots, x_{n+1}^\ell \}$ with $x_j \in I_j^\ell$ for $j \in [1,n+1]$.
\end{itemize}
Thus $\op{Loc}(\mbf x,E^\ell, \mbf z)$ denotes the predicate of the third property.
Note this means that $X^\ell$ generates a copy of $\mbf F_{n+1}$ inside of $\mbf F_n(Z)$, by Corollary~1.12 of~\cite{FJN}.

We can think of $\msc E$ as follows.  
There are infinitely many independent sequences $\mbf x = (x_1, \dots, x_{n+1})$ in $\mbf F_n$,
each of which generates a sublattice isomorphic to $\mbf F_{n+1}$.   
However, each such sequence is located (i.e., $x_j \in I_j$ for $j \in [1,n+1]$) in one of finitely many sequences of intervals
$E = (I_1, \dots, I_{n+1})$ where each $I_j$ is a class of $\ker h_n$.
We record these possibilities as $\msc E = ( E^1, \dots, E^N )$
with $E^\ell = (I_1^\ell, \dots, I_{n+1}^\ell)$ for $1 \leq \ell \leq N$.
The collection $\msc E$ will be used in the formulation of the sentence $\pi^n$.

It is important to note that none of the intervals in a member of $\msc E$ are $h_n^{-1}(a)$ for $a$ being $0$, an atom, a join of two atoms, $1$, a coatom, or a meet of two coatoms. 
These elements, which lie in singleton $\ker h_n$-classes, are never a generator for a copy of $\mbf F_{n+1}$ in $\mbf F_n$.
For if $b$ and $c$ are atoms of $\mbf F_n$, then $\downarrow\!(b+c)$ is finite, whereas $\downarrow\! x_j$ is infinite in $\mbf F_{n+1}$.  
Again see Figure~3.5 of \cite{FJN}.
In other words, the predicates in the third clause are $\beta_a(\mbf z) \leq x_j \leq \alpha_a(\mbf z)$ with $a$ neither below the join of two atoms, nor above the meet of two coatoms. 

An interval $I_j^\ell$ could however be the preimage of the upper cover of the join of two atoms, 
or dually the preimage of the lower cover of the meet of two coatoms, as these are infinite intervals in $\mbf F_n$.

Now we define $\pi^n$:
\begin{align*}                 
& \exists z_1, \dots, z_n \  t(z_1+\dots+z_n) \ \&\ b(z_1 \cdots z_n) \ \& \\
&  \forall x_1, \dots,  x_{n+1} \ [\, \op{NI}( x_1, \dots, x_{n+1}) \  \op{or}\  \op{OR}_{E^\ell \in \msc E} \op{Loc}(\mbf x,E^\ell, \mbf z) \   ]
\end{align*}

\begin{lm} \label{lm:hold4}
$\pi^n$ holds in $\mbf F_n$.
\end{lm}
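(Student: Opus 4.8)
The plan is to witness the existential quantifiers in $\pi^n$ by the free generators themselves and then to check the universal clause by tracking where an arbitrary independent tuple sits relative to the map $h_n$. First I would interpret $z_1, \dots, z_n$ as the free generators $Z$ of $\mbf F_n = \mbf F_n(Z)$, so that $\mbf z$ henceforth denotes this tuple. Since every element of $\mbf F_n$ is a lattice term in $Z$, and any such term lies below $z_1 + \dots + z_n$ and above $z_1 \cdots z_n$, these two elements are the greatest and least elements of $\mbf F_n$; this is exactly what the conjuncts $t(z_1 + \dots + z_n)$ and $b(z_1 \cdots z_n)$ assert. So the existential part of $\pi^n$ is immediate for this choice.

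For the universal part I would fix an arbitrary tuple $(x_1, \dots, x_{n+1})$ in $\mbf F_n$ and argue by cases on independence. If $\{x_1, \dots, x_{n+1}\}$ is not independent, then $\op{NI}(x_1, \dots, x_{n+1})$ holds by definition and the disjunction is satisfied. The substantive case is when the tuple is independent, and here the key idea is that the collection $\msc E$ was defined precisely to capture such tuples. For each $j$ I would set $a_j = h_n(x_j) \in \mbf A_n$ and let $I_j = h_n^{-1}(a_j)$ be the corresponding $\ker h_n$-class, which by the description preceding Lemma~\ref{lm:3cases} is the interval $[\beta_{a_j}(\mbf z), \alpha_{a_j}(\mbf z)]$ in $\mbf F_n$.

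The decisive observation is then that the tuple $E = (I_1, \dots, I_{n+1})$ belongs to $\msc E$: each $I_j$ is a class of $\ker h_n$, and the independent set $\{x_1, \dots, x_{n+1}\}$ itself witnesses the third defining clause, since $x_j \in I_j$ for every $j$. Hence $E = E^\ell$ for some $\ell \leq N$. Finally, because $h_n(w) = a$ is equivalent to $\beta_a(\mbf z) \leq w \leq \alpha_a(\mbf z)$, the fact that $h_n(x_j) = a_j$ gives $\beta_{a_j}(\mbf z) \leq x_j \leq \alpha_{a_j}(\mbf z)$ for each $j$, which is exactly the conjunction defining $\op{Loc}(\mbf x, E^\ell, \mbf z)$. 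Thus this disjunct holds and the universal clause is verified.

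I do not expect a genuine obstacle here: the whole content lies in the construction of $\mbf A_n$, the interval description of $\ker h_n$-classes in Lemma~\ref{lm:3cases}, and the design of $\msc E$, all of which are already in place, so the verification is essentially definitional. The only point requiring a moment's care is the existential part---confirming that $\mbf F_n$ genuinely has the join and meet of its generators as top and bottom---together with checking that $\msc E$ is finite (which it is, since $\mbf A_n$ is finite and hence $\ker h_n$ has only finitely many classes), so that $\pi^n$ is a legitimate first-order sentence.
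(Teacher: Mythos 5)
Your proposal is correct and follows exactly the paper's proof, which simply takes $z_1,\dots,z_n$ to be the free generators and observes that the universal clause then holds by the very definition of $\msc E$; you have merely spelled out the definitional details (top and bottom of $\mbf F_n$, and that any independent tuple itself witnesses membership of its tuple of $\ker h_n$-classes in $\msc E$). No gap, and no difference in approach.
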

\begin{proof}
    Take $z_1, \dots, z_n$ to be the generators of $\mbf F_n$.
    Then $\pi^n$ holds by the definition of $\msc E$.
\end{proof}

Now let us show that $\pi^n$ fails in $\mbf F_{n+1}(X)$. 
That is, we want to show that for any $n$-tuple $(c_1, \dots, c_n)$  in $\mbf F_{n+1}$ with $\prod c_i = 0$ and $\sum c_i=1$,
and using the standard generating set $x_1, \dots, x_{n+1}$ of $\mbf F_{n+1}$ (so that $\op{NI}(\mbf x)$ fails), 
there is some $j \in [1,n+1]$ such that $\beta_a(\mbf c) \leq x_j \leq \alpha_a(\mbf c)$ holds for no $a \in \mbf A_n$.
Thus every option $\op{Loc}(\mbf x, E^\ell, \mbf z)$ in $\pi^n$ will fail.

Now $\beta_a(\mbf z)$ and $\alpha_a(\mbf z)$ are polynomials
in $(z_1, \dots, z_n)$; they have evaluations in $\mbf F_{n+1}(X)$ at $(c_1, \dots, c_n)$ under the substitution map $s: \mbf F_n(Z) \to \mbf F_{n+1}(X)$ with $s(z_i) = c_i$ for all $i$.  
The notation $\beta_a(\mbf c)$ means $s(\beta_a(\mbf z))$.

As a consequence of join and meet primeness of the generators:

\begin{lm} \label{lm:dagwood}

If\/ $c_1, \dots, c_n$ are such that $\prod c_i = 0$ and $\sum c_i=1$ in $\mbf F_{n+1}$, then for each generator $x_j$ there exist $c_i$ and $c_k$ with $c_i \leq x_j \leq c_k$.
\end{lm}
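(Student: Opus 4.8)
The plan is to read off the two required inequalities directly from the join- and meet-primeness of the generators of $\mbf F_{n+1}$, which were recorded at the beginning of this section. First I would note that, because $\mbf F_{n+1}$ is finitely generated, it has a greatest element $1 = \sum_{i=1}^{n+1} x_i$ and a least element $0 = \prod_{i=1}^{n+1} x_i$ (every lattice term in the generators is, by a trivial induction, $\leq \sum_i x_i$ and $\geq \prod_i x_i$). Consequently each generator $x_j$ satisfies the two trivial inequalities $x_j \leq 1$ and $x_j \geq 0$. The hypothesis of the lemma is exactly that the chosen tuple realizes these two bounds, namely $\sum_{i=1}^n c_i = 1$ and $\prod_{i=1}^n c_i = 0$.

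With this in hand the two halves are symmetric. For the upper bound, fix a generator $x_j$; then $x_j \leq 1 = \sum_{i=1}^n c_i$, and since $x_j$ is join prime there is an index $k \in [1,n]$ with $x_j \leq c_k$. Dually, for the lower bound, $x_j \geq 0 = \prod_{i=1}^n c_i$, and meet-primeness of $x_j$ produces an index $i \in [1,n]$ with $c_i \leq x_j$. Combining the two yields $c_i \leq x_j \leq c_k$, which is the assertion of the lemma.

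I expect there to be essentially no obstacle here: the whole argument is a single application of primeness in each direction. The only point that genuinely needs to be checked is that $1$ and $0$ really exist in $\mbf F_{n+1}$ and are captured by $\sum_i c_i$ and $\prod_i c_i$ respectively, so that the cheap inequalities $x_j \leq \sum_i c_i$ and $x_j \geq \prod_i c_i$ are available to feed into the primeness hypotheses; once that is observed, the primeness of the generators does all the work.
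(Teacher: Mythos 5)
Your proof is correct and is exactly the argument the paper intends: the paper states the lemma ``as a consequence of join and meet primeness of the generators,'' and your two applications of primeness to $x_j \leq 1 = \sum_i c_i$ and $x_j \geq 0 = \prod_i c_i$ (with $0 = \prod_i x_i$ and $1 = \sum_i x_i$ existing because $\mbf F_{n+1}$ is finitely generated) are precisely that consequence. Nothing further is needed.
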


Notice that at least one of the inequalities $c_i \leq x_j$ must be strict, since $|C|=n <n+1 = |X|$, so indeed we must have $c_i < x_j$ for at least one $i$, or else $\prod c_i = 0$ would not hold.  Dually, $x_j < c_k$ for at least one $k$.
 
Now let:
\begin{align*}
A &= \{ c_k : c_k > x_j \text{ for some } j \in [1,n+1] \} ,\\
B &= \{ c_i : c_i < x_j \text{ for some } j \in [1,n+1] \} ,\\
E &= \{ c_k : c_j = x_j \text{ for some } j \in [1,n+1] \} ,\\
P &= \{ c_k : c_j  \parallel x_j \text{ for all } j \in [1,n+1] \}\, .
\end{align*}
These sets are pairwise disjoint, because $X$ is an antichain, and $|A|+|B|+|E|+|P| = n$ 
with $|A| \geq 1$ and $|B| \geq 1$.
This is due to $A \ne \varnothing$, $B \ne \varnothing$ (because of what was observed above), and $A \cup B \cup E \cup P = \{ c_1, \dots, c_n \}$.

Let $A_Z = \{ z_k : c_k \in A \}$ etc.~ be the corresponding sets of variables. 

As always $\mbf F_n(Z) =\,  \downarrow\! (\sum B_Z) \ \dot\cup\, \uparrow\! (\prod (A_Z \cup E_Z \cup P_Z))$.
Choose $x_j$ such that $x_j \neq c_i$ for all $i \in [1,n]$.
Suppose $x_j \in I_j^\ell$ holds as part of some $\op{Loc}(\mbf x,E^\ell, \mbf z)$.
Then $\beta_a(\mbf c) \leq x_j \leq \alpha_a(\mbf c)$ holds in $\mbf F_{n+1}(X)$ for some $a \in \mbf A_n$, with $a$ strictly above the join of any two atoms and strictly below the meet of any two coatoms. 
Moreover, $\prod (A_Z \cup E_Z \cup P_Z) \nleq \beta_a(\mbf z)$ in $\mbf F_n$ since $s(\beta_a(\mbf z)) = \beta_a(\mbf c) \leq x_j$, while $s(\prod (A_Z \cup E_Z \cup P_Z)) 
= \prod (A \cup E \cup P) \nleq x_j$ because $x_j$ is meet prime. 
Therefore $\beta_a(\mbf z) \leq \sum B_Z$ by the split. 
Hence $a = h_n(\beta_a(\mbf z)) \leq h_n(\sum B_Z)$.
That implies $\alpha_a(\mbf z) \leq \alpha_{\sum B_Z}(\mbf z) = \sum B_Z$, by Lemma~\ref{lm:3cases} as $\sum B_Z$ is always in Case~1.
(In Cases~2b and~3b, $g_n(a)$ is the meet of two coatoms,
say $\ol z_i \cdot \ol z_j$, and $\ol z_i \cdot \ol z_j > \sum_{k \neq i,j} z_k$ in $\mbf{FD}_n$, which would be the only option for $\sum B_Z$.)
Hence $x_j \leq \alpha_a(\mbf c) = s(\alpha_a(\mbf z)) \leq s(\sum B_Z) = \sum B$, contradicting the join primeness of $x_j$ as $x_j \nleq c_i$ for all $c_i \in B$.

\begin{thm} \label{thm:main}
$\pi^n$ is a positive sentence that holds in $\mbf F_n$ and fails in $\mbf F_{n+1}$.
\end{thm}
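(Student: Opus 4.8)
The plan is to verify three things: that $\pi^n$ is genuinely positive, that it holds in $\mbf F_n$, and that it fails in $\mbf F_{n+1}$. The first is immediate by inspection, since $\pi^n$ is assembled from the predicates $t$, $b$, $\op{NI}$, and $\op{Loc}$ --- each positive --- using only $\exists$, $\forall$, conjunction and disjunction, with no negation anywhere; its prefix is $\exists\forall$, matching the abstract. The second is exactly Lemma~\ref{lm:hold4}. So the real content is the failure in $\mbf F_{n+1}$. Since the matrix of $\pi^n$ is guarded by $\exists \mbf z$ whose first two conjuncts force $\sum z_i = 1$ and $\prod z_i = 0$ (via $t$ and $b$) and whose third conjunct is $\forall \mbf x\,(\op{NI}(\mbf x)\ \op{or}\ \op{OR}_{E^\ell\in\msc E}\op{Loc}(\mbf x,E^\ell,\mbf z))$, falsifying $\pi^n$ amounts to showing: for \emph{every} tuple $\mbf c=(c_1,\dots,c_n)$ in $\mbf F_{n+1}$ with $\sum c_i=1$ and $\prod c_i=0$, there is a tuple $\mbf x$ falsifying that inner matrix. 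I would take $\mbf x=(x_1,\dots,x_{n+1})$ to be the free generators of $\mbf F_{n+1}$.

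Since the generators are independent, $\op{NI}(\mbf x)$ fails, so it remains to falsify every disjunct $\op{Loc}(\mbf x,E^\ell,\mbf z)$. Because $|\{c_1,\dots,c_n\}|=n<n+1$, at least one generator, say $x_{j_0}$, is distinct from all the $c_i$. The key claim I would isolate is that \emph{no} class $a\in\mbf A_n$ of the kind admitted into a member of $\msc E$ --- in particular, $a$ lying strictly above every join of two atoms and strictly below every meet of two coatoms --- can satisfy $\beta_a(\mbf c)\leq x_{j_0}\leq\alpha_a(\mbf c)$. Granting this, $x_{j_0}\notin I_{j_0}^\ell$ for every $\ell$, so each $\op{Loc}(\mbf x,E^\ell,\mbf z)$, which demands $x_{j_0}\in I_{j_0}^\ell$, fails, completing the falsification.

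To prove the claim I would argue by contradiction, assuming such an $a$ exists, and exploit the standard splitting together with the primeness of the free generators. Via Lemma~\ref{lm:dagwood} partition $\{c_1,\dots,c_n\}$ into the sets $A,B,E,P$ (with $A,B$ nonempty), and use the induced split $\mbf F_n(Z)=\,\downarrow\!(\sum B_Z)\ \dot\cup\,\uparrow\!(\prod(A_Z\cup E_Z\cup P_Z))$. Meet primeness of $x_{j_0}$, together with $x_{j_0}\neq c_i$ for all $i$, forces $\prod(A\cup E\cup P)\nleq x_{j_0}$, hence $\prod(A_Z\cup E_Z\cup P_Z)\nleq\beta_a(\mbf z)$ after pulling back along the substitution $s\colon z_i\mapsto c_i$; by the split this puts $\beta_a(\mbf z)\leq\sum B_Z$. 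Applying $h_n$ and then Lemma~\ref{lm:3cases} --- crucially that $\sum B_Z$ is a join of generators and so falls under Case~1, giving $\alpha_{\sum B_Z}(\mbf z)=\sum B_Z$ --- yields $\alpha_a(\mbf z)\leq\sum B_Z$, whence $x_{j_0}\leq\sum B$; this contradicts join primeness of $x_{j_0}$, since $x_{j_0}\nleq c_i$ for each $c_i\in B$.

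The main obstacle is exactly this last claim. The delicate points are keeping the case analysis of Lemma~\ref{lm:3cases} honest --- in particular checking that $\sum B_Z$ genuinely lies in Case~1, so that its upper bound term is itself, rather than in the degenerate Cases~2b/3b where $g_n(a)$ is a meet of two coatoms --- and using both the join and the meet primeness of the free generators in tandem with the two halves of the split. Once the claim is in hand, the three ingredients (positivity, Lemma~\ref{lm:hold4}, and the failure argument) combine immediately to give Theorem~\ref{thm:main}.
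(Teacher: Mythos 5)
Your proposal is correct and takes essentially the same route as the paper: positivity by inspection, Lemma~\ref{lm:hold4} for truth in $\mbf F_n$, and for the failure in $\mbf F_{n+1}$ the identical argument---evaluating at the free generators so that $\op{NI}(\mbf x)$ fails, choosing $x_{j_0} \neq c_i$ for all $i$, partitioning $\{c_1,\dots,c_n\}$ into $A,B,E,P$ via Lemma~\ref{lm:dagwood}, using the standard splitting together with meet primeness to force $\beta_a(\mbf z) \leq \sum B_Z$, and then applying Case~1 of Lemma~\ref{lm:3cases} to $\sum B_Z$ to get $x_{j_0} \leq \sum B$, contradicting join primeness. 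Even the delicate point you single out---checking that $h_n(\sum B_Z)$ cannot fall under Cases~2b/3b---is precisely the parenthetical verification in the paper's own proof.
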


\subsection{The positive theory of (some) relatively free lattices}\label{positive_other_varieties}

A direct adaptation of the proof shows that for any lattice
variety $\msc V$ with $\mbf A_n \in \msc V$, and $n \geq 3$, the  sentence $\pi^n$ holds in the relatively free lattice $\mbf F \msc V_n$ and fails in $\mbf F \msc V_{n+1}$.

The following facts are relevant.
\begin{itemize}
   \item[(i)]  The generators of $\mbf F \msc V_n$ are join and meet prime; indeed, this property is inherited from free distributive lattices, as $\msc D$ is contained in $\msc V$.
   \item[(ii)]  The split of $\mbf F \msc V (Z)$ into $\uparrow\! (\prod Z_1) \ \dot\cup\, \downarrow\! (\sum Z_2)$
   whenever $Z = Z_1 \dot\cup Z_2$ is a consequence of (i).   
   \item[(iii)]  Whitman's condition $(\mathrm W)$ no longer holds, so independent sets need not generate a copy of a (relatively) free lattice.  Instead, we just work with independence \emph{per se}.
   \item[(iv)]  The condition $\msc E$ used in $\pi^n$ assumes that we know, at least in principle, where the independent subsets of $\mbf F_n$ of size $n+1$ are located.  We no longer have Whitman's or Tschantz's theorem to guarantee their existence in certain intervals; for all we know, there could be none.  Having fewer (or no) independent sets in $\mbf F \msc V_n$ just shortens the sentence $\pi^n$; the argument remains valid. 
   \item[(v)]  No independent set $X$ of $\mbf F\msc V(n)$ of size $k>n$ contains the join two atoms.  To see this, first recall that any nontrivial lattice variety satisfies $\msc D \leq \msc V \leq \msc L$, where $\msc D$ denotes distributive lattices and $\msc L$ denotes all lattices.  Hence there are natural homomorphisms $\mbf F \msc L(Z) \to \mbf F \msc V (Z) \to \mbf F \msc D(Z)$.  But the atoms and joins of pairs of atoms are the same in $\mbf F \msc L(Z)$ and $\mbf F \msc D(Z)$, \emph{viz.}, $0 \prec \ul z_i \prec \ul z_i + \ul z_j$.
   So the same holds in $\mbf F \msc V(Z)$.  Thus if $u$ is the join of two atoms in $\mbf F \msc V(Z)$, i.e. $u = \ul z_i + \ul z_j$ with $i \ne j$, then $|\!\downarrow\! u \,| = 4$. 
   Meanwhile, if $X$ is an independent set, then meets of distinct subsets of $X$ are distinct.  If $|X| = k > n \ge 3$ and $x \in X$, then $|\!\downarrow\! x\,| \geq 2^{k-1} \ge 8$.
   Consequently, an independent subset of $\mbf F\msc V_n$ with more than $n$ elements cannot contain an atom or the join of two atoms.
   \item[(vi)]  Since $\mbf A_n \in \msc V$, there is a surjective homomorphism $h_n : \mbf F \msc V_n \to \mbf A_n$.
   This ensures that a join of two atoms $\ul z_i + \ul z_j$ 
   and its upper cover $\prod_{k \ne i,j} (\ul z_i + \ul z_j + \ul z_k)$ are distinct in $\mbf F \msc V_n$, because they evaluate distinctly under $h_n$ in $\mbf A_n$.
   \item[(vii)]  Because $\mbf A_n$ is a lower bounded lattice, any homomorphism from a finitely generated lattice to $\mbf A_n$ is a bounded homomorphism; see \cite[Theorem~2.13]{FJN}.
\end{itemize}

\section{First-order rigidity}

In this section the following hypotheses stand: $\mathbf{K} \equiv \mathbf{F}_n$ and $\mathbf{K}$ is finitely generated. As observed in \cite{NaPa14} we have that $\mathbf{F}_n$ is a prime model of its theory, so without loss of generality we can assume that $\mathbf{F}(X) = \mathbf{F}_n \preccurlyeq \mathbf{K}$ and that $\mathbf{K}$ is generated by a finite set $Y \supseteq X$. Recall that an element $a$ in a lattice $\mbf L$ is said to be join irreducible if $a = b \vee c$ implies that either $a = b$ or $a = c$. Notice also that being join irreducible is a first-order property. Recall that we denote by $\text{J}(\mathbf{K})$ the set of join irreducible elements in $\mathbf{K}$. Also, we sometimes abbreviate ``join irreducible'' with j.i. Finally, recall all the other notions and notations from Section~\ref{bounded_sec}.


\begin{lm}
Every element of\/ $\mbf K$ is the join of\/ $3$ or fewer elements from $\op{J}(\mathbf{K})$.
\end{lm}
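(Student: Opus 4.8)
The plan is to prove the statement in $\mbf F_n$ and transfer it to $\mbf K$. The property ``every element is a join of at most three join irreducible elements'' is first-order: using that join irreducibility is first-order (as noted above), it is expressed by $\forall a\,\exists p_1\,\exists p_2\,\exists p_3\,(\,p_1,p_2,p_3 \in \op{J} \meet a = p_1 \join p_2 \join p_3\,)$, where allowing repetitions among the $p_i$ accounts for joins of fewer than three elements. Since $\mbf F(X) = \mbf F_n \preccurlyeq \mbf K$ gives $\mbf F_n \equiv \mbf K$, it suffices to verify this sentence in $\mbf F_n$, where free-lattice structure (canonical forms) is available.

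So fix $w \in \mbf F_n$. First I would dispose of the easy case: if $w \in \op{J}(\mbf F_n)$, then $w$ is already a join of one join irreducible element and we are done. Otherwise $w$ is join reducible, and I would take its canonical join representation $w = a_1 \join \cdots \join a_m$, so that each $a_i$ is join irreducible and the representation is irredundant (see \cite{FJN}). If $m \leq 3$ we are done immediately. The substance is the case $m \geq 4$, where the a priori unbounded canonical form must be \emph{merged} down to three join irreducible pieces.

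For $m \geq 4$, partition $\{1,\dots,m\}$ into three nonempty blocks $P_1,P_2,P_3$, and for each $t$ set $b_t = \Join_{i \in P_t} a_i$ and
$q_t = \Meet_{j \notin P_t} (b_t \join a_j)$. Each meetand satisfies $b_t \leq b_t \join a_j \leq w$, so $b_t \leq q_t \leq w$; hence $\Join_t q_t \geq \Join_t b_t = \Join_i a_i = w$ while $\Join_t q_t \leq w$, giving $\Join_t q_t = w$. Thus $w = q_1 \join q_2 \join q_3$, and everything reduces to showing each $q_t \in \op{J}(\mbf F_n)$.

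The hard part is exactly this join irreducibility, and the key observation I would use is that in any lattice satisfying $(\op W)$---in particular $\mbf F_n$---every element is join irreducible or meet irreducible (this is the fact established in the proof of Theorem~\ref{intermediate_theorem}). Therefore it suffices to show that $q_t$ is meet \emph{reducible}. Since the two blocks other than $P_t$ are nonempty, the index set $\{1,\dots,m\} \setminus P_t$ has at least two elements, so $q_t$ is a meet of at least two meetands $b_t \join a_j$. These are pairwise incomparable: for distinct $j \neq j'$ outside $P_t$ we have $b_t \join a_{j'} \leq \Join_{i \neq j} a_i$, and irredundancy of the canonical form gives $a_j \nleq \Join_{i \neq j} a_i$, whence $a_j \nleq b_t \join a_{j'}$ and so $b_t \join a_j \nleq b_t \join a_{j'}$ (and symmetrically). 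A short induction on the number of meetands then shows that a meet of two or more pairwise incomparable elements is strictly below each of them and so cannot be meet irreducible. Hence $q_t$ is meet reducible, therefore join irreducible, which completes the argument in $\mbf F_n$ and, by elementary equivalence, in $\mbf K$. I expect the only delicate point to be the incomparability-plus-induction step guaranteeing $q_t$ is a proper meet, which is what forces the bound to be three rather than two: when $m = 3$ one cannot split the indices into two blocks each leaving two indices outside, so the canonical form $a_1 \join a_2 \join a_3$ is the genuinely worst case.
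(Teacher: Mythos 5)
Your proof is correct, but it takes a genuinely different route from the paper's. Both arguments share the same skeleton---express the property as a single first-order sentence, transfer it from $\mbf F_n$ to $\mbf K$, and split off the canonical join representation $w = a_1 \join \cdots \join a_m$, with $m \leq 3$ immediate---but they diverge entirely on the case $m \geq 4$. The paper first proves that every interval of length at least $2$ in $\mbf F_n$ contains a join irreducible element, using Tschantz's theorem (\cite[Theorem~9.10]{FJN}) for infinite intervals and the classification of the finite intervals of $\mbf F_n$ (\cite[Theorems~7.5 and~7.10]{FJN}) for the rest; it then sets $a = a_1 \join a_2$ and $b = \Join_{3 \leq i \leq m} a_i$, notes that $[a,w]$ and $[b,w]$ both have length $\geq 2$ by irredundance, and picks join irreducibles $a' \in [a,w]$, $b' \in [b,w]$ with $a' \join b' = w$---so it in fact obtains a join of \emph{two} join irreducibles whenever $m \geq 4$. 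You instead exhibit explicit term witnesses $q_t = \Meet_{j \notin P_t}(b_t \join a_j)$: your incomparability computation from irredundance is right, your compressed induction (``a meet of two or more pairwise incomparable elements \dots cannot be meet irreducible'') does close correctly---if $c_2 \meet \cdots \meet c_k$ collapses to $q$, recurse on the remaining meetands; otherwise $q = c_1 \meet (c_2 \meet \cdots \meet c_k)$ is a proper binary meet---and the $(\op W)$-dichotomy then yields join irreducibility of each $q_t$, with $w = q_1 \join q_2 \join q_3$ by $q_t \geq b_t$. What each approach buys: yours is elementary and self-contained, avoiding Tschantz's theorem (a deep result) and the interval classification altogether, and it works verbatim in any lattice satisfying $(\op W)$ in which every element admits an irredundant finite join representation with join irreducible joinands; the paper's yields more structural information, namely join irreducibles in every interval of length $\geq 2$ and the sharper conclusion that two join irreducibles suffice once $m \geq 4$. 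Incidentally, your construction recovers that sharper conclusion too: for $m \geq 4$ take just \emph{two} blocks, each of size at least $2$, so that each complement still has at least two indices and both $q_1, q_2$ remain proper meets of incomparable elements---confirming your closing observation that $m = 3$ is the genuine obstruction to replacing three by two.
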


\begin{proof}
The property of the lemma is first-order.
Let us show it holds in $\mbf F_n$, whence it will also be true in $\mbf K$.

\smallskip \noindent The crucial observation is that every interval of length at least 2 in $\mathbf{F}_n$ contains a join irreducible element.
If an interval in $\mbf F_n$ is infinite, then it contains a copy of $\mathbf F_\omega$ by Tschantz's Theorem (\cite[Theorem~9.10]{FJN}), and hence it contains infinitely many join irreducible elements.  The finite intervals of $\mbf F_n$ are classified in Theorems~7.5 and~7.10 of~\cite{FJN}, and from this description we find that every finite interval has length at most 3, and intervals of length 2 or 3 contain at least one join irreducible element.

\smallskip \noindent Consider now an element $w \in \mathbf{F}_n$ with $w = w_1 \join \dots \join w_m$ canonically. 
If $m \leq 3$, then the property of the lemma holds, so assume $m \geq 4$.
Set $a = w_1 \join w_2$ and $b = \Join_{3 \leq i \leq m} w_i$.
Each of the intervals $[a,w]$ and $[b,w]$ has length $\geq 2$, and thus contains a join irreducible element.
If the join irreducibles are $a' \in [a,w]$ and $b' \in [b,w]$, then $a' \join b' = w$ as desired.
\end{proof}

\begin{lm}\label{additional_assumption}
In addition to the assumptions at the beginning of this section we can assume that:
\begin{enumerate}[(1)]
\item every element of $Y$ is join irreducible;
\item if $y, z \in Y$ are incomparable, then $y \wedge z \notin Y$.
\end{enumerate}
\end{lm}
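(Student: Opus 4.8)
The plan is to modify the generating set $Y$ in two successive stages, first securing (1) and then (2), while checking that the second stage does not destroy the first. Throughout I would use that each free generator $x \in X$ is both join irreducible and meet irreducible \emph{in} $\mbf K$. Indeed, the generators of $\mbf F_n$ are join prime and meet prime, hence join and meet irreducible, and each of these is a first-order property with $x$ as a parameter; since $\mbf F(X) = \mbf F_n \preccurlyeq \mbf K$, the same irreducibility holds for $x$ viewed as an element of $\mbf K$.

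For (1), I would invoke the previous lemma: every element of $\mbf K$ is a join of at most three members of $\op{J}(\mbf K)$. Starting from the given finite $Y \supseteq X$, I replace each $y \in Y$ that fails to be join irreducible by a set of at most three join irreducible elements whose join equals $y$, discarding $y$ itself. The resulting set is still finite; it still contains $X$, because every $x \in X$ is already join irreducible and so is never one of the discarded elements; and it still generates $\mbf K$, since each discarded $y$ is recovered as the join of its replacements. After one pass every element of the new $Y$ lies in $\op{J}(\mbf K)$, so (1) holds.

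For (2), I would run a deletion process that preserves (1). Suppose $y, z \in Y$ are incomparable and $w := y \wedge z \in Y$. Incomparability gives $w < y$ and $w < z$, so $w \ne y$ and $w \ne z$. Moreover $w \notin X$: were $w$ a free generator it would be meet irreducible in $\mbf K$, forcing $w = y$ or $w = z$, contradicting incomparability. Hence $w \in Y \setminus X$, and I may delete it; the set $Y \setminus \{w\}$ still contains $X$, still satisfies (1), and still generates $\mbf K$, because $w = y \wedge z$ is recovered from the surviving $y, z$. Each deletion strictly decreases $|Y|$, which is bounded below by $|X| = n$, so the process halts after finitely many steps at a generating set in which no incomparable pair has its meet in $Y$, i.e.\ one satisfying (2).

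The main point to get right is the interaction between the two stages, namely the guarantee that the required generators in $X$ are never removed. In stage one this holds because the elements of $X$ are join irreducible and so are never decomposed; in stage two it is exactly where meet irreducibility of the free generators, transported to $\mbf K$ via $\mbf F_n \preccurlyeq \mbf K$, is indispensable, since otherwise a deletion could in principle strip out an element of $X$ and break the standing hypothesis $Y \supseteq X$. Termination of stage two is immediate from the cardinality bound, and since that stage only deletes elements it cannot reintroduce a failure of (1).
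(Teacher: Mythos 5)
Your proof is correct and follows essentially the same route as the paper's: replace each non-join-irreducible generator by its join-irreducible decomposition (licensed by the previous lemma), then delete meets $y \wedge z$ of incomparable pairs, noting generation is preserved at each step. The extra care you take --- restricting to incomparable pairs, using meet irreducibility of the free generators (transported via $\mbf F_n \preccurlyeq \mbf K$) to keep $X \subseteq Y$ intact, and the termination count --- only makes explicit details the paper's terse proof leaves implicit.
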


\begin{proof}
For each non-j.i. $y \in Y$, write it as a join of j.i.'s, say $y = \bigvee y_i$. In $Y$, replace $y$ by the set of $y_i$'s, and we still have a finite generating set. Then if $y, z, y \wedge z$ are all in $Y$ remove $y \wedge z$ and it is still a generating set.
\end{proof}

Recall the definition of $\text{D}_k(\mathbf{F}_n)$ from Section~\ref{bounded_sec}. We have that the join irreducibles in $\text{D}_k(\mathbf{F}_n)$ are in $X^{\wedge(\vee\wedge)^k}$ (cf.~\cite[pg.~28]{FJN}). Let $\sigma(n, k) = |X^{\wedge(\vee\wedge)^k}|$. 
Let $\delta_0 (w)$ be the property that $w$ is join prime. 
For $k > 0$, let $\delta_k(w)$ be the sentence saying that every join cover $w \leq r \vee s$ refines ($\ll$) to a join cover $x \leq \bigvee U$ with $|U| \leq \sigma(n, k)$ and $\delta_{k-1}(u)$ for all $u \in U$. Observe that $\delta_k(w)$ is a first-order property that holds of every $w \in \text{D}_k(\mathbf{F}_n)$. Notice also that $\delta_k(a) \leftrightarrow a \in X^{\wedge(\vee\wedge)^k}$ is in $\mathrm{Th}(\mathbf{F}_n)$, where the clause $a \in X^{\wedge(\vee\wedge)^k}$ is to be written out as a disjunction over all the possibilities (recall that $X^{\wedge(\vee\wedge)^k}$ is finite).




    Recall the definition of the join dependency relation $u D v$ from \ref{def_join_dep_rel}.

\begin{lm}\label{lemma_5.3}
For every $k < \omega$, $\mathrm{Th}(\mathbf{F}_n)$ contains the following sentence:
$$\forall u \text{ j.i. }[\forall v \text{ j.i.}(u D v \; \rightarrow \; \delta_k(v)) \rightarrow \delta_{k+1}(u)].$$
\end{lm}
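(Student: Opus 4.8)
The plan is to establish the displayed sentence in $\mbf F_n$ itself; since the predicates $\delta_k$, $\delta_{k+1}$ and the relation $u D v$ are all first-order (the bounds $\sigma(n,k)$ and $\sigma(n,k+1)$ being fixed finite numbers), this places the sentence in $\mathrm{Th}(\mbf F_n)$. Throughout I use the two facts recorded just above: the join irreducibles of $\op D_k(\mbf F_n)$ lie in $X^{\wedge(\vee\wedge)^k}$, and $\delta_j(a) \leftrightarrow a \in X^{\wedge(\vee\wedge)^j}$ holds in $\mbf F_n$ for join irreducible $a$. Consequently, for a join irreducible element, satisfying $\delta_j$ is the same as lying in (the join irreducibles of) $\op D_j(\mbf F_n)$, and I argue entirely in terms of $\op D$-rank, translating back to the $\delta$'s only at the end.

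So fix a join irreducible $u \in \mbf F_n$ and assume the hypothesis: every join irreducible $v$ with $u D v$ satisfies $\delta_k(v)$, equivalently $v \in \op D_k(\mbf F_n)$. I must deduce $u \in \op D_{k+1}(\mbf F_n)$, i.e.\ that every nontrivial join cover of $u$ refines to a join cover contained in $\op D_k(\mbf F_n)$. The structural input is the standard description, valid in $\mbf F_n$, of minimal nontrivial join covers: $\mbf F_n$ has the minimal join cover refinement property, every minimal nontrivial join cover consists of join irreducible elements, and a join irreducible $v$ occurs in a minimal nontrivial join cover of $u$ precisely when $u D v$. These facts tie Definition~\ref{def_join_dep_rel} to the combinatorics of join covers.

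Granting this, the argument is short. Let $u \leq r \join s$ be any nontrivial join cover. Refine it to a minimal nontrivial join cover $U$; by the above, $U \subseteq \op J(\mbf F_n)$ and every $v \in U$ satisfies $u D v$, whence $\delta_k(v)$ holds by hypothesis and $v \in X^{\wedge(\vee\wedge)^k}$ by the established equivalence. Thus $U \subseteq X^{\wedge(\vee\wedge)^k} \subseteq \op D_k(\mbf F_n)$, so $U$ is a refinement of $\{r,s\}$ witnessing membership of $u$ in $\op D_{k+1}(\mbf F_n)$, each of its members satisfies $\delta_k$, and $|U| \leq |X^{\wedge(\vee\wedge)^k}| = \sigma(n,k) \leq \sigma(n,k+1)$, the last inequality holding because $\op D_k(\mbf F_n) \subseteq \op D_{k+1}(\mbf F_n)$ forces $X^{\wedge(\vee\wedge)^k} \subseteq X^{\wedge(\vee\wedge)^{k+1}}$. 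Hence $\delta_{k+1}(u)$ holds in $\mbf F_n$, as desired.

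The main obstacle is the structural input invoked in the second paragraph: that in $\mbf F_n$ every nontrivial join cover refines to a minimal one, that minimal join covers consist of join irreducibles, and that membership in a minimal join cover is exactly captured by the relation $D$ of Definition~\ref{def_join_dep_rel}. This is the content of the standard theory of the join dependency relation in free (more generally, finitely generated bounded) lattices; once it is in hand, the equivalence $\delta_j \leftrightarrow \op D_j$ converts the $\op D$-rank recursion into the displayed first-order sentence. A minor point to confirm is that binary join covers $r \join s$ suffice to test $\delta_{k+1}$: since the equivalence $\delta_j(a) \leftrightarrow a \in X^{\wedge(\vee\wedge)^j}$ already shows $\delta_j$ faithfully detects $\op D_j$-membership for join irreducibles, one passes freely between the first-order predicates and the $\op D$-rank formulation.
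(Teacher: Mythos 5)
Your proposal is correct and takes essentially the same route as the paper: both translate the sentence via the equivalence $\delta_k(a) \leftrightarrow a \in X^{\wedge(\vee\wedge)^k}$ and reduce it to the structural fact that if every join irreducible $v$ with $u\,D\,v$ (equivalently, every member of $J(u)\setminus\{u\}$) lies in $X^{\wedge(\vee\wedge)^k}$, then $u \in X^{\wedge(\vee\wedge)^{k+1}}$. The paper compresses this structural step into a citation of \cite[pg.~48]{FJN}, whereas you spell it out via the minimal join cover refinement property of $\mbf F_n$ and the identification of the $D$-relation with membership in minimal nontrivial join covers — a legitimate unpacking of the same argument, including the needed cardinality bound $|U| \leq \sigma(n,k) \leq \sigma(n,k+1)$.
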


\begin{proof}
Recall that for every $k < \omega$ we have that $\mbf{F}_n \models \delta_k(a) \leftrightarrow a \in X^{\wedge(\vee\wedge)^k}$. The lemma thus states that, for $u \in \mathbf{F}_n$, if $J(u) \setminus \{u\} \subseteq X^{\wedge(\vee\wedge)^k}$, then $u \in X^{\wedge(\vee\wedge)^{k+1}}$, where $J(u)$ is as \cite[pg. 48]{FJN}.
\end{proof}

\begin{lm}\label{sent_12} The following is true in $\mathbf{F}_n$:
\begin{itemize}
\item if $u$ is join irreducible but not join prime, then there exists a join irreducible $v$ such that $u D v$.
\end{itemize}
\end{lm}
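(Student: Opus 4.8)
The plan is to realise $v$ as a member of a minimal nontrivial join cover of $u$. Since $u$ is not join prime, by definition it has a nontrivial join cover $u \leq \bigvee A$ with $u \not\leq a$ for all $a \in A$. Because $\mathbf{F}_n$ is a free lattice it enjoys the minimal join cover refinement property \cite{FJN}, so $A$ refines to a minimal nontrivial join cover $M = \{m_1, \dots, m_k\}$ of $u$ (minimal in the sense of Section~\ref{bounded_sec}). Note $k \geq 2$: a singleton cover $\{m\}$ would force $u \leq m$, contradicting nontriviality.

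First I would extract two consequences of minimality. The set $M$ is an antichain, for if $m_i < m_j$ then $M \setminus \{m_i\}$ is a join cover of $u$ refining $M$, so minimality gives $M \subseteq M \setminus \{m_i\}$, which is absurd; in particular the $m_i$ are distinct. Moreover each $m_i$ is join irreducible: were $m_i = b \vee c$ with $b,c < m_i$, then $(M \setminus \{m_i\}) \cup \{b,c\}$ would be a join cover of $u$ refining $M$, forcing $m_i \in (M \setminus \{m_i\}) \cup \{b,c\}$ by minimality, which is impossible since the $m$'s are distinct and $b, c < m_i$.

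Next I fix $v = m_1$ and $q = m_2 \vee \cdots \vee m_k$ and check the three clauses of Definition~\ref{def_join_dep_rel}. Nontriviality gives $u \not\leq v$, hence $u \neq v$; the element $v$ is join irreducible by the previous paragraph; and $u \leq v \vee q = \bigvee M$. The remaining clause is that $u \not\leq s \vee q$ for every $s < v$. If instead $u \leq s \vee q$ for some $s < v$, then $B = \{s\} \cup \{m_2, \dots, m_k\}$ is a join cover of $u$ with $B \ll M$ (since $s < v = m_1$ while each $m_i \leq m_i$), so minimality yields $M \subseteq B$; but $v = m_1 \notin B$, as $s \neq v$ and the $m_i$ are distinct, a contradiction. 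Thus $u D v$ with $v$ join irreducible, which is the assertion.

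The only genuinely nonelementary input is the existence of a minimal nontrivial join cover beneath a given one, that is, the minimal join cover refinement property of $\mathbf{F}_n$; the rest is a direct manipulation of the refinement relation $\ll$ together with the definition of a minimal cover. Since this property is standard for finitely generated free lattices — it reflects the lower boundedness recorded as $\op D(\mathbf{F}_n) = \mathbf{F}_n$ via Theorems~\ref{thm:kos} and~\ref{thm:proj} — I expect no real obstacle, only the need to cite it cleanly.
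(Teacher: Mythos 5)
Your proof is correct and matches the paper in substance: the paper's entire proof is a citation to \cite[Theorem~2.50]{FJN}, and your argument --- refine a nontrivial join cover of $u$ to a minimal one via the minimal join cover refinement property of $\mathbf{F}_n$, observe that members of a minimal nontrivial join cover form an antichain of join irreducibles, and take $v = m_1$, $q = m_2 \vee \cdots \vee m_k$ to witness $u \, D\, v$ as in Definition~\ref{def_join_dep_rel} --- is precisely the standard proof of that cited result. The one step you use tacitly, that a refinement of a nontrivial join cover is itself nontrivial (if $u \leq m \leq a$ for some $a$ in the original cover, then $u \leq a$), is immediate, so there is no gap.
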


    \begin{proof} This follow from \cite[Theorem~2.50]{FJN}.
\end{proof}

Combining the previous lemmas we get:

\begin{cor}\label{crucial_cor}
If $u$ is a j.i. in $\mathbf{K} \setminus \mathbf{F}_n$, then either (a) or (b), where:
\begin{enumerate}[(a)]
\item there exists $v$ j.i. in $\mathbf{K} \setminus \mathbf{F}_n$ such that $u D v$;
\item $u D v$ for infinitely many $v$'s in $\mathbf{F}_n$ (and hence $v$'s of arbitrarily large $D$-rank).
\end{enumerate}
\end{cor}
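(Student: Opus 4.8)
The plan is to establish the disjunction by contradiction: assume that \emph{both} (a) and (b) fail, and derive that $u$ must lie in $\mathbf{F}_n$, contradicting $u \in \mathbf{K}\setminus\mathbf{F}_n$. Throughout I would lean on the fact that ``$v$ is join irreducible'', ``$uDv$'' (Definition~\ref{def_join_dep_rel}), and each $\delta_k(\cdot)$ are first-order, together with the two transfer principles at our disposal: parameter-free sentences of $\mathrm{Th}(\mathbf{F}_n)$ hold in $\mathbf{K}$ because $\mathbf{K}\equiv\mathbf{F}_n$, while formulas with parameters drawn from $\mathbf{F}_n$ (in particular the generators $x_1,\dots,x_n$) transfer because $\mathbf{F}(X)=\mathbf{F}_n\preccurlyeq\mathbf{K}$.

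First I would set $S=\{\,v\in\mathbf{K} : v\text{ j.i. and } uDv\,\}$, the witness set computed in $\mathbf{K}$ (since $uDv$ forces $v$ to be j.i., this captures the witnesses relevant to both (a) and (b)). The failure of (a) says $S\subseteq\mathbf{F}_n$, and the failure of (b) says $S\cap\mathbf{F}_n$ is finite; hence $S$ is a finite subset of $\mathbf{F}_n$. As join irreducibility is first-order and $\mathbf{F}_n\preccurlyeq\mathbf{K}$, each $v\in S$ is a join irreducible element of $\mathbf{F}_n$, so $v\in X^{\wedge(\vee\wedge)^{k_v}}$ for some finite $k_v$ (using $\op D(\mathbf{F}_n)=\mathbf{F}_n$). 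Because $S$ is finite I set $k=\max_{v\in S}k_v$ (taking $k=0$ if $S=\varnothing$); by monotonicity of the $\delta$-predicates in $k$ (reflecting $\op D_k(\mathbf{F}_n)\subseteq\op D_{k+1}(\mathbf{F}_n)$), every $v\in S$ then satisfies $\delta_k(v)$ in $\mathbf{F}_n$, hence in $\mathbf{K}$.

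Next I would feed this into Lemma~\ref{lemma_5.3}. Since $S$ contains \emph{every} join irreducible $v$ with $uDv$, what we have just shown is exactly that $\forall v\text{ j.i.}\,(uDv\rightarrow\delta_k(v))$ holds of our particular $u$ in $\mathbf{K}$. The sentence of Lemma~\ref{lemma_5.3} is parameter-free and lies in $\mathrm{Th}(\mathbf{F}_n)$, so it holds in $\mathbf{K}$; instantiating it at $u$ yields $\delta_{k+1}(u)$ in $\mathbf{K}$. Finally, the equivalence $\delta_{k+1}(a)\leftrightarrow a\in X^{\wedge(\vee\wedge)^{k+1}}$ is a formula with parameters $x_1,\dots,x_n\in\mathbf{F}_n$ that holds in $\mathbf{F}_n$, so by $\mathbf{F}_n\preccurlyeq\mathbf{K}$ it holds in $\mathbf{K}$. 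Thus $\delta_{k+1}(u)$ forces $u$ to equal one of the finitely many terms of $X^{\wedge(\vee\wedge)^{k+1}}$, each evaluated from generators lying in $\mathbf{F}_n$, whence $u\in\mathbf{F}_n$ — the desired contradiction. The degenerate sub-case $S=\varnothing$ is absorbed by $k=0$; note that Lemma~\ref{sent_12} guarantees $S\neq\varnothing$ whenever $u$ is not join prime, while if $u$ were join prime then $\delta_0(u)$ would already place $u\in X^{\wedge}\subseteq\mathbf{F}_n$.

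The step I expect to be the crux is the careful bookkeeping of \emph{which} transfer principle applies to \emph{which} statement. Lemma~\ref{lemma_5.3} is a parameter-free sentence and moves along $\equiv$, whereas identifying the $\delta_{k+1}$-class with the concrete finite set $X^{\wedge(\vee\wedge)^{k+1}}$ genuinely uses that the generators are named parameters and that $\mathbf{F}_n$ is an elementary \emph{substructure}, not merely elementarily equivalent. Keeping this distinction straight is precisely what legitimizes the conclusion ``$u\in\mathbf{F}_n$''; all the lattice-theoretic substance is already packaged in the preceding lemmas.
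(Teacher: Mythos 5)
Your proof is correct and follows essentially the same route as the paper's: both arguments reduce the negation of (a) and (b) to a uniform bound $k$ with $\delta_k(v)$ for all $D$-witnesses $v$, then invoke Lemma~\ref{lemma_5.3} to get $\delta_{k+1}(u)$ and the equivalence $\delta_{k+1}(a)\leftrightarrow a\in X^{\wedge(\vee\wedge)^{k+1}}$ to force $u\in\mathbf{F}_n$, a contradiction. The paper phrases this as ``if some $k$ bounds all witnesses, contradiction; hence the dichotomy,'' while you take the contrapositive by extracting the bound from finiteness of the witness set, and your explicit bookkeeping of which facts transfer via $\equiv$ versus via $\mathbf{F}_n\preccurlyeq\mathbf{K}$ is a faithful (indeed slightly more careful) rendering of what the paper leaves implicit.
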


\begin{proof}
If $u$ is j.i. but not in $\mathbf{F}_n$, then it is not join prime, since we have that $\delta_0(a) \leftrightarrow a \in X^\wedge$ is in $\mathrm{Th}(\mathbf{F}_n)$. Hence by \ref{sent_12} we get $u D v$ for some j.i. $v$. If there is a $k \geq 0$ such that every such $v$ satisfies $\delta_k(v)$, then $\delta_{k+1}(u)$ holds by \ref{lemma_5.3}. Then the fact that $\delta_k(a) \leftrightarrow a \in X^{\wedge(\vee\wedge)^k}$ is true in $\mbf{F}_n$ implies that $u \in X^{\wedge(\vee\wedge)^{k+1}} \subseteq \mathbf{F}_n$, a contradiction. Hence, either $u D v$ for a $v$ satisfying no $\delta_k$, in which case $v \notin \mathbf{F}_n$, or they all satisfy some $\delta_k$ but there is no bound on $k$.
\end{proof}

    \begin{theorem}\label{the_rigidity_hammer} Suppose that $\mbf{K}$ is lower bounded, then neither (a) nor (b) from \ref{crucial_cor} can happen and so necessarily $\mathbf{K} = \mathbf{F}_n$.
\end{theorem}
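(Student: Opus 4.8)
The plan is to deduce $\mathbf{K} = \mathbf{F}_n$ from the \emph{absence} of join irreducibles in $\mathbf{K}\setminus\mathbf{F}_n$. Indeed, if $\op{J}(\mathbf{K}) \subseteq \mathbf{F}_n$, then, since the first lemma of this section (which is first-order, hence also holds in $\mathbf{K}$) says that every element of $\mathbf{K}$ is a join of at most three join irreducibles, and since $\mathbf{F}(X) = \mathbf{F}_n \preccurlyeq \mathbf{K}$ is in particular a sublattice, every element of $\mathbf{K}$ is a join of finitely many elements of $\mathbf{F}_n$ and so lies in $\mathbf{F}_n$; thus $\mathbf{K} = \mathbf{F}_n$. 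By Corollary~\ref{crucial_cor}, establishing that neither (a) nor (b) can hold for any join irreducible of $\mathbf{K}\setminus\mathbf{F}_n$ is precisely the statement that $\op{J}(\mathbf{K}) \subseteq \mathbf{F}_n$.

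The remaining task is to contradict (a) and (b) using lower boundedness. By Theorem~\ref{thm:kos}, lower bounded means $\op D(\mathbf{K}) = \mathbf{K}$, so every join irreducible $p$ of $\mathbf{K}$ lies in some $\op D_k(\mathbf{K})$; let $\op{rk}(p)$ denote the least such $k$. I would use the standard reformulation of the $\op D_k$-filtration via the join dependency relation of Definition~\ref{def_join_dep_rel} (see \cite{FJN}): for a join irreducible $p$, one has $p \in \op D_k(\mathbf{K})$ if and only if every join irreducible $q$ with $p \mathrel{D} q$ satisfies $q \in \op D_{k-1}(\mathbf{K})$ — the first-order $\delta$-shadow of one direction being Lemma~\ref{lemma_5.3}. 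Consequently $p \mathrel{D} q$ forces $\op{rk}(q) < \op{rk}(p)$.

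Now assume for contradiction that $\mathbf{K}\setminus\mathbf{F}_n$ contains a join irreducible, and pick $u$ of minimal rank $m = \op{rk}(u)$ among them; here $m \geq 1$, because every join prime of $\mathbf{K}$ already lies in $\mathbf{F}_n$ (the equivalence $\delta_0(a) \leftrightarrow a \in X^{\wedge}$ holds in $\mathbf{F}_n$ and transfers to $\mathbf{K}$, its parameters lying in $X \subseteq \mathbf{F}_n$). Corollary~\ref{crucial_cor} gives (a) or (b) for $u$. If (a) holds there is a join irreducible $v \in \mathbf{K}\setminus\mathbf{F}_n$ with $u \mathrel{D} v$, so $\op{rk}(v) \leq m-1 < m$, contradicting the minimality of $m$. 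If (b) holds then for each $k$ there is a join irreducible $v_k \in \mathbf{F}_n$ with $u \mathrel{D} v_k$ of $D$-rank at least $k$ computed in $\mathbf{F}_n$. Since $u \mathrel{D} v_k$, the previous paragraph gives $\op{rk}(v_k) \leq m-1$ in $\mathbf{K}$. On the other hand, a $D$-descending chain from $v_k$ down to a join prime witnessing rank $\geq k$ in $\mathbf{F}_n$ consists of elements of $\mathbf{F}_n$, and as ``$\mathrel{D}$'', ``join irreducible'' and ``join prime'' are all first-order, this chain persists verbatim in $\mathbf{K}$ by $\mathbf{F}_n \preccurlyeq \mathbf{K}$; hence the rank of $v_k$ in $\mathbf{K}$ is also $\geq k$. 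Choosing $k \geq m$ yields a contradiction.

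Therefore no join irreducible lies in $\mathbf{K}\setminus\mathbf{F}_n$, and by the first paragraph $\mathbf{K} = \mathbf{F}_n$. I expect the delicate point to be the rank transfer used against (b): one must know that $D$-rank is genuinely witnessed by finite $D$-descending chains and that such chains among elements of $\mathbf{F}_n$ are preserved under the elementary embedding, so that large $\mathbf{F}_n$-rank of the $v_k$ forces large $\mathbf{K}$-rank and collides with the uniform bound $m-1$ supplied by lower boundedness. A secondary care point is fixing the correct direction of the $\op D_k$/join-dependency recursion, namely that every $D$-successor of $p$ has strictly smaller rank than $p$.
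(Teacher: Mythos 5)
Your proof is correct and takes essentially the same route as the paper: lower boundedness gives $\op D(\mathbf{K}) = \mathbf{K}$, the join dependency relation strictly decreases $\op D$-rank (via the minimal join cover refinement property, which holds for $\mathbf{K}$ by \cite[Lemma~2.19]{FJN} as the paper notes), so (a) dies by well-foundedness and (b) dies because the arbitrarily large $\mathbf{F}_n$-ranks of the $v_k$ transfer to $\mathbf{K}$ and collide with the uniform bound $\op{rk}(v_k) \leq m-1$ forced by $u \mathrel{D} v_k$. The differences are only organizational — you choose a minimal-rank counterexample where the paper iterates (a) into a $D$-sequence, and you deduce $\mathbf{K} = \mathbf{F}_n$ from the three-join-irreducibles lemma rather than from the join-irreducible generating set $Y$ — and you usefully make explicit the elementarity transfer of $D$-chains from $\mathbf{F}_n$ to $\mathbf{K}$ that the paper's treatment of case (b) leaves implicit.
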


    \begin{proof} Suppose that $\mathbf{K}$ is lower bounded so $\text{D}(\mathbf{K}) = \mathbf{K}$ by \cite[Theorem 2.13]{FJN}. For the sake of contradiction, suppose that $\mathbf{K} \neq \mathbf{F}_n$. Recall that by \ref{additional_assumption} we have that $Y$ (the generating set of $\mathbf{K}$) contains at least one j.i. element $u_0$ which is not in $\mathbf{F}_n$. Suppose that (a) happens for $u_0$, then there exists j.i. $u_1 \in \mbf K \setminus \mathbf{F}_n$ such that $u_0 D u_1$. Now, $u_1$ satisfies the same assumptions of $u_0$, hence either (a) or (b) happens for $u_1$. And we can repeat our argument. All in all, either we find an infinite $D$ sequence made of elements $(u_i : i < \omega)$ starting at $u_0$, which contradicts $\text{D}(\mathbf{K}) = \mathbf{K}$, or, for some $i < \omega$, $u_i$ is as in case (b). So suppose that (b) happens for $u_i$, for some $i < \omega$. By \cite[Lemma 2.19]{FJN}, $\mathbf{K}$ has the minimal join cover refinement property (cf.~\cite[pg.~30]{FJN}). Now, if $a, b \in \text{J}(\mathbf{K})$ and $a \text{ D } b$, then $b$ is in a minimal nontrivial join cover of $a$, so $a \in \text{D}_k(\mathbf{K})$ implies $b \in \text{D}_{k-1}(\mathbf{K})$. Hence a D-sequence starting with $a$ can contain at most $\rho(a) + 1$ elements, where $\rho(a)$ denotes the D-rank of $a$. As we are assuming that $\text{D}(\mathbf{K}) = \mathbf{K}$, condition (b) from \ref{crucial_cor} cannot happen either.
\end{proof}

    \begin{proof}[Proof of \ref{first-order_rig_th}] This follows from the results in this section, in particular \ref{the_rigidity_hammer}.
    \end{proof}

\bibliographystyle{amsplain}
\bibliography{FOT_bib}

\providecommand{\bysame}{\leavevmode\hbox to3em{\hrulefill}\thinspace}
\providecommand{\MR}{\relax\ifhmode\unskip\space\fi MR }
\providecommand{\MRhref}[2]{%
  \href{http://www.ams.org/mathscinet-getitem?mr=#1}{#2}
}
\providecommand{\href}[2]{#2}
\begin{thebibliography}{10}

\bibitem{RAD77}
A.~Day, \emph{Splitting lattices generate all lattices}, Algebra Universalis
  \textbf{7} (1977), 163--170.

\bibitem{RAD79}
\bysame, \emph{Characterizations of lattices that are bounded-homomorphic
  images or sublattices of free lattices}, Canad. J. Math. \textbf{31} (1979),
  69--78.

\bibitem{RAD_it}
\bysame, \emph{A note on iterating {W}-repairs}, unpublished draft, 1990.

\bibitem{RAD92}
\bysame, \emph{Doubling constructions in lattice theory}, Can. J. Math.
  \textbf{44} (1992), 252--269.

\bibitem{DGP}
A.~Day, H.~Gaskill, and W.~Poguntke, \emph{Distributive lattices with finite
  projective covers}, Pacific J. Math. \textbf{81} (1979), 45--59.

\bibitem{Dean1964}
R.~Dean, \emph{Free lattices generated by partially ordered sets and preserving
  bounds}, Canad. J. Math. \textbf{16} (1964), 136--148.

\bibitem{FJN}
R.~Freese, J.~Je\v{z}ek, and J.B. Nation, \emph{Free {L}attices}, Mathematical
  Surveys and Monographs, vol.~42, Amer. Math. Soc., Providence, 1995.

\bibitem{FN1978}
R.~Freese and J.~Nation, \emph{Projective lattices}, Pacific J. Math.
  \textbf{75} (1978), 93--106.

\bibitem{FN_STA}
\bysame, \emph{Free and finitely presented lattices ({C}hapter 2)}, Lattice
  Theory: Special Topics and Applications (G.~Gr\"atzer and F.~Wehrung, eds.),
  vol.~2, Birkh\"auser, Cham, 2016, pp.~27--57.

\bibitem{hodges}
W.~Hodges, \emph{Model theory}, Camridge University Press, 1993.

\bibitem{JN77}
B.~J\'onsson and J.~Nation, \emph{A report on sublattices of a free lattice},
  Contributions to universal algebra; {C}oll. {M}ath. {S}oc. {J}\'anos Bolyai,
  vol. 17, North Holland, Amsterdam, 1977, pp.~233--257.

\bibitem{Kostinsky}
A.~Kostinsky, \emph{Projective lattices and bounded homomorphisms}, Pacific J.
  Math. \textbf{40} (1972), 111--119.

\bibitem{McK1972}
R.~McKenzie, \emph{Equational bases and non-modular lattice varieties}, Trans.
  Amer. Math. Soc. \textbf{174} (1972), 1--43.

\bibitem{NaPa14}
J.B. Nation and G.~Paolini, \emph{Elementary properties of free lattices},
  Forum Math. \textbf{02} (2025), 581--592.

\bibitem{ThS}
T.~Skolem, \emph{Logische-kombinatorische untersuchungen}, Selected Works in
  Logic (J.~E. Fenstad, ed.), Universitetsforlaget, Oslo, 1970.

\bibitem{PMW1941}
P.~M. Whitman, \emph{Free lattices}, Ann. of Math. (2) \textbf{42} (1941),
  325--330.

\end{thebibliography}
\end{document}